\newtheorem{example}{Example}
\newcommand{\R}{\mathbb{R}}
\newcommand{\E}{\mathbb{E}}
\newcommand{\I}{\mathbb{I}}
\theoremstyle{plain}
\newtheorem{theorem}{Theorem}[section]
\newtheorem{lemma}[theorem]{Lemma}
\newtheorem{corollary}[theorem]{Corollary}
\theoremstyle{definition}
\newtheorem{definition}[theorem]{Definition}
\newtheorem{assumption}[theorem]{Assumption}
\theoremstyle{remark}
\icmltitlerunning{Generalized Polyak Step Size with Momentum}
\begin{document}

\twocolumn[
\icmltitle{Generalized Polyak Step Size for First Order Optimization with Momentum}



\icmlsetsymbol{equal}{*}

\begin{icmlauthorlist}
\icmlauthor{Xiaoyu Wang}{hkust}
\icmlauthor{Mikael Johansson}{kth}
\icmlauthor{Tong Zhang}{hkust}
\end{icmlauthorlist}

\icmlaffiliation{kth}{Royal Institute of Technology (KTH)}
\icmlaffiliation{hkust}{The Hong Kong University of Science and Technology}
\icmlcorrespondingauthor{Xiaoyu Wang}{maxywang@ust.hk}

\icmlkeywords{Machine Learning, ICML}

 \vskip 0.3in
]



\printAffiliationsAndNotice{} 

\begin{abstract}
In machine learning applications, it is well known that carefully designed learning rate (step size) schedules can significantly improve the convergence of commonly used first-order optimization algorithms. Therefore how to set step size adaptively becomes an important research question. A popular and effective method is the Polyak step size, which sets step size adaptively for gradient descent or stochastic gradient descent without the need to estimate the smoothness parameter of the objective function. However, there has not been a principled way to generalize the Polyak step size for algorithms with momentum accelerations. This paper presents a general framework to set the learning rate adaptively for first-order optimization methods with momentum, motivated by the derivation of Polyak step size. It is shown that the resulting techniques are 
 much less sensitive to the choice of momentum parameter and may avoid the oscillation of the heavy-ball method on ill-conditioned problems. These adaptive step sizes are further extended to the stochastic settings, which are attractive choices for stochastic gradient descent with momentum. 
 Our methods are demonstrated to be more effective for stochastic gradient methods than prior adaptive step size algorithms in large-scale machine learning tasks. 

\end{abstract}

\section{Introduction}
\label{sec:intro}

We consider stochastic optimization problems on the form
\begin{align}\label{P1}
    \min_{x \in \R^d} f(x):= \E_{\xi \sim \Xi }[f(x;\xi)]
\end{align}
where $\xi$ is a random variable with probability distribution $\Xi$ and $f(x;\xi)$ is the instantaneous realization of $f$ with respect to $\xi$. We use $X^{\ast}$ to denote the set of minimizers of (\ref{P1}), which we assume is non-empty. In other words, there is at least one  $x^{\ast} \in \R^d$ such that $f^{\ast}=f(x^{\ast})=\min f(x)$. 

Stochastic gradient descent (SGD)~\citep{SGD-1951} has been the workhorse for training machine learning models. To accelerate its practical performance, one often adds a momentum term to SGD, leading to algorithms such as SGDM~\citep{sutskever2013importance}. SGDM has been widely used in deep neural networks due to its empirical success and is a default choice in machine learning libraries (PyTorch and TensorFlow). However, its practical performance relies heavily on the choice of the step size (learning rate) that controls the rate at which the model learns. 


In the traditional optimization literature, Polyak's heavy ball (momentum) method~\cite{polyak1964some}
is a well-known technique to accelerate gradient descent.  
By accounting for the history of the iterates, it achieves a linear convergence that is substantially faster than gradient descent on ill-conditioned problems. 
However, to achieve its optimal performance, the heavy-ball method relies on 
a 
specific combination of momentum parameter $\beta$ and step size $\eta$ adapted to the condition number of the problem. One downside of the method is that its empirical performance is very sensitive to the momentum factor $\beta\in (0,1)$ (see Figure \ref{fig:ls:beta}), which makes the method difficult to use when the condition number is unknown.  For the heavy-ball method, each component of the decision vector is updated independently and shares the same step size. Even in very simple examples,  
the method exhibits a zigzag phenomenon in the dimension with large curvature~\citep{polyak1964some}, leading to a slow and oscillatory convergence.
Thus, an adaptive step size is important for the best practical behavior of the heavy-ball method. 


 For convex functions whose optimal value is known a priori, the Polyak step size, which depends on the current function value and the magnitude of the gradient (subgradient), is optimal in a certain sense 
~\citep{polyak_book,CFM,brannlund1995generalized}. \citet{hazan2019revisiting} revisited the Polyak step size and proved near-optimal convergence even when the optimal value is unknown. Still, it is impractical to use the deterministic Polyak step size due to the computation of exact function values and gradients in each iteration.
Recently, there has been a strong interest in developing adaptive step size policies that are inspired by the classical Polyak step size~\citep{L4, SGD_polyak, ALIG, SPS, pmlr-v134-sebbouh21a}. This line of research has been particularly successful on overparameterized models. \citet{SPS} extended the Polyak step size to the stochastic setting and proposed a stochastic Polyak step size (SPS). \citet{ALIG} made explicit use of the interpolation property to design a step size policy for SGD in a closed form (called ALI-G) and incorporated regularization as a constraint to promote generalization. The experiments in \citet{ALIG,berrada2021comment}  used momentum without and theoretical guarantee and demonstrated that it could significantly improve the practical performance. This highlights the importance of adaptive step size policies for momentum methods. However, the
existing research on Polyak step sizes has focused on SGD, and rarely designed adaptive step sizes for heavy-ball and momentum algorithms.

\subsection{Motivation}
To demonstrate the challenges that arise in adapting the Polyak step size to momentum algorithms, we consider the approach that underpins L$^4$Mom~\citep{L4}. The key idea is to linearize the loss function at the current iterate, 
$f(x_k - \eta d_k) \approx f(x_k) - \eta \left\langle \nabla f(x_k), d_k\right\rangle$ and then choose $\eta_k$  so that the linearized  prediction of $f$ at the next iterate equals $f^{\ast}$.  
To account for the inaccuracy of the linear approximation, L$^4$Mom introduces a hyperparameter $\alpha>0$ and uses
$\eta = \alpha \frac{f(x_k) - f^{\ast}}{\left\langle \nabla f(x_k), d_k \right\rangle}$.  
However, we have found that this algorithm is quite unstable in practice, and fails on standard experiments such as 
the CIFAR100 experiments in Section \ref{sec:numerical:dnn}. This sensitivity is also observed in~\citep{ALIG}. One reason is that $\left\langle \nabla f(x_k), d_k \right\rangle$ is not always guaranteed to be positive. We experience difficulties with the algorithm even in a simple least-squares problem with condition number $\kappa=10^4$  and $f^{\ast}=0$. 
 \begin{figure}[ht]
\begin{center}
 \vskip -0.1in
\includegraphics[width=0.23\textwidth,height=1.4in]{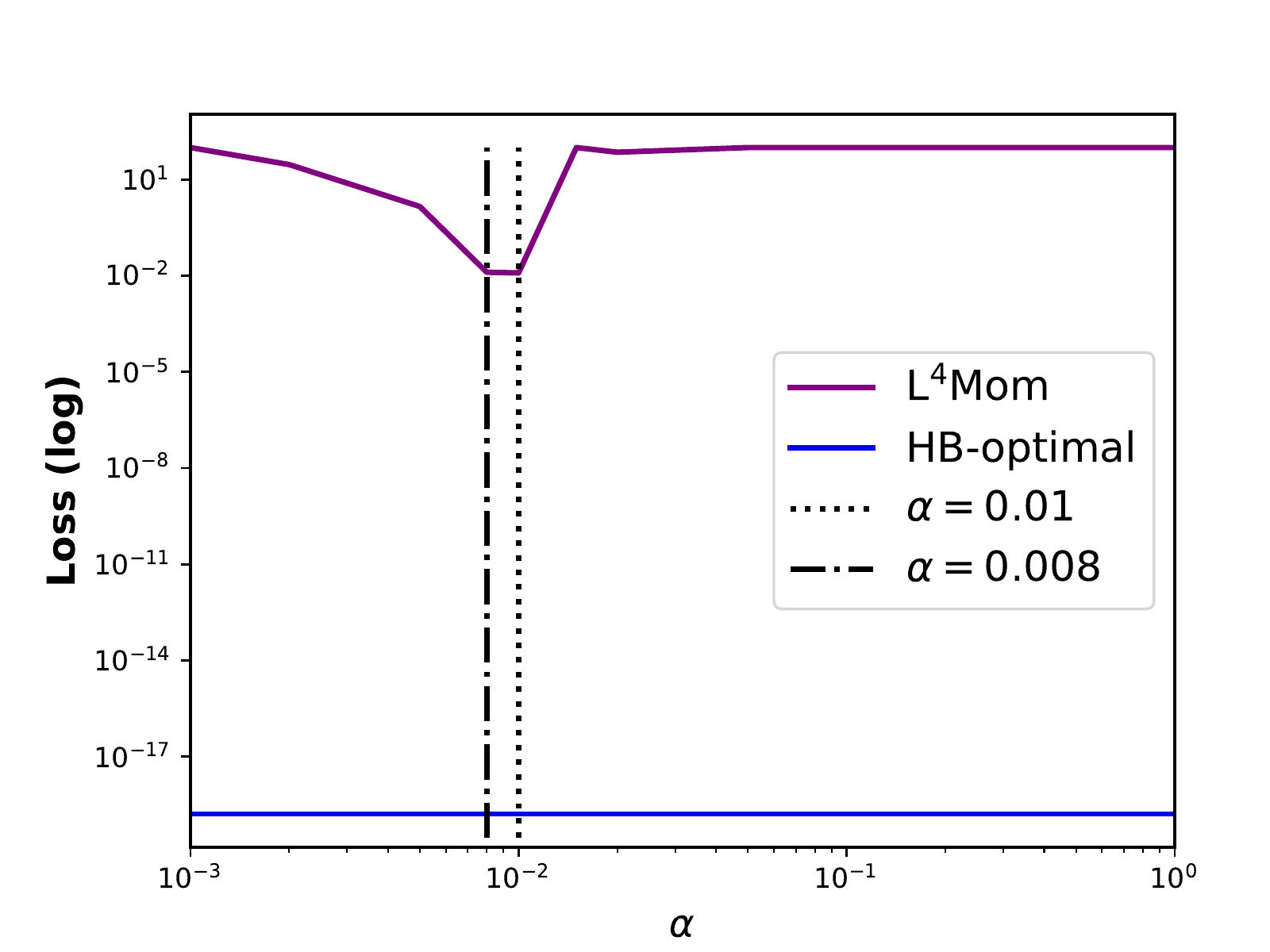}
\includegraphics[width=0.23\textwidth,height=1.4in]{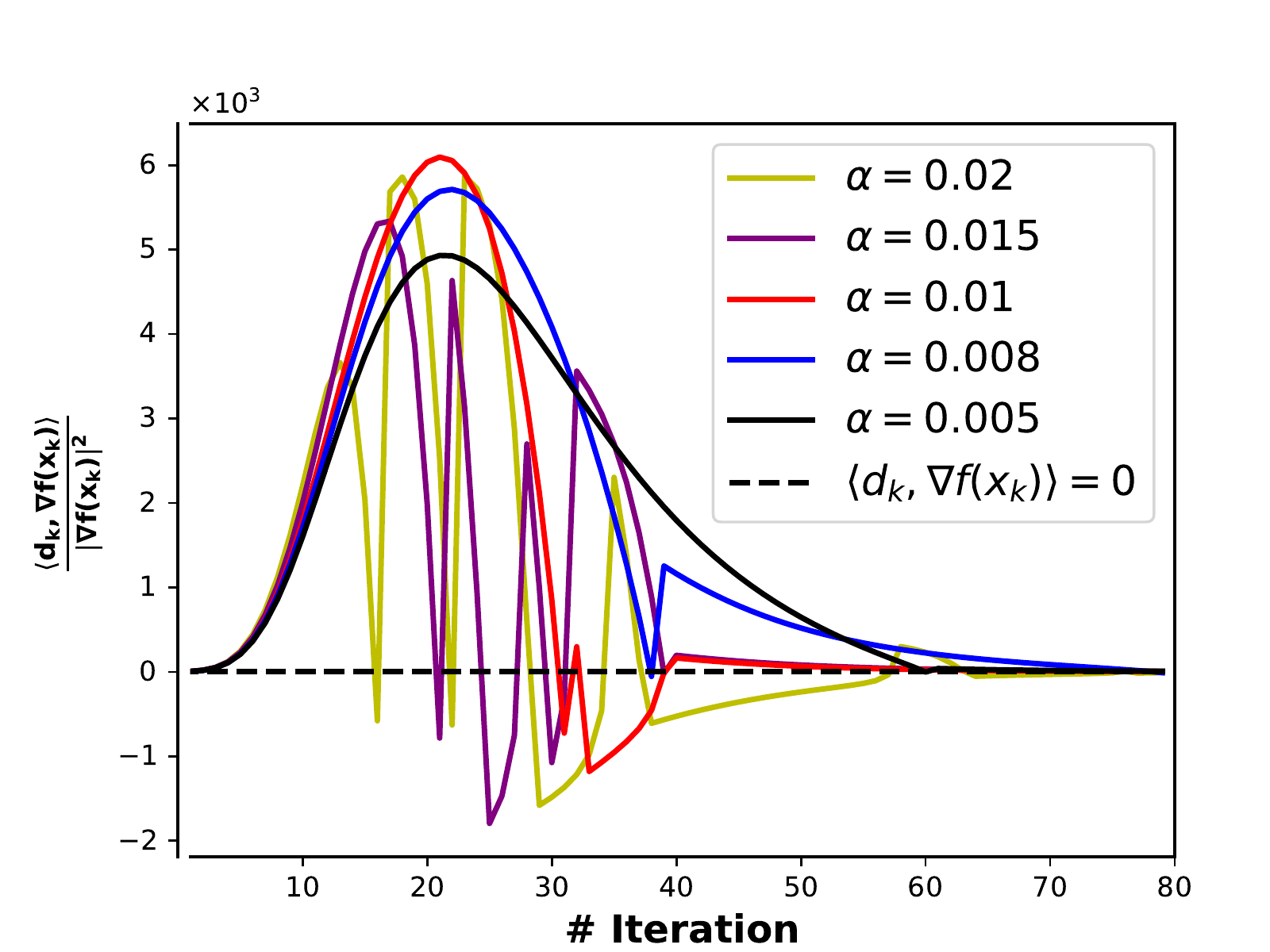}
\vskip -0.1in
\label{fig:l4:mom}
\end{center}
\end{figure}
 The parameter $\alpha$ is crucial for the empirical convergence: for large values of $\alpha$, the algorithm easily explodes, while small values of $\alpha$ result in slow convergence. In brief, $L^4$ is not an ideal approach for finding an adaptive step size for heavy-ball or momentum acceleration. Besides, there is no theoretical guarantee for the L$^4$Mom algorithm. Our goal is to find adaptive step sizes for the momentum acceleration algorithms that are more stable and efficient in practice.

\subsection{Contribution}
{\bf A new perspective on adaptive step sizes for momentum.} Inspired by the success of the Polyak step size for subgradient methods and SGD, and the absence and insufficiency of adaptive Polyak step sizes for momentum accelerations, we propose a generic Adaptive Learning Rate (ALR) framework for two variants of momentum methods: heavy-ball (HB) and moving averaged gradient (MAG). We call corresponding adaptive algorithms ALR-HB and ALR-MAG and make the following contributions:
\begin{enumerate}[topsep=0.5pt,itemsep=0.5ex,partopsep=1ex,parsep=1ex]
    \item[(i)] We prove global linear convergence of ALR-MAG on semi-strongly convex and smooth functions, improving the results for modified subgradient methods in  \citep{brannlund1995generalized}, under less restrictive assumptions.
    \item[(ii)] For least-squares problems, we demonstrate that ALR-HB and ALR-MAG are less sensitive to the choice of $\beta$ than the original heavy-ball method. 
    Our algorithms are significantly better than heavy-ball, gradient descent with Polyak step size, and L$^4$Mom if the condition number is unknown a priori. 
    \item[(iii)] The proposed framework is also applicable to Nesterov accelerated gradient (NAG)~\citep{nesterov1983} and performs better than the original Nesterov momentum under optimal parameters (see Appendix~\ref{sec:model:nag}).
\end{enumerate}

{\bf Stochastic extensions of ALR-HB and ALR-MAG.} We extend ALR-HB and ALR-MAG to the stochastic setting and call them ALR-SHB and ALR-SMAG, respectively. We make the following contributions:
\begin{enumerate}[topsep=0.5pt,itemsep=0.5ex,partopsep=1ex,parsep=1ex]
 \item[(i)] Under the assumption of interpolation (overparameterized models),  we prove a linear convergence rate for ALR-SMAG on semi-strongly convex and smooth functions. Such a result did not exist for SGD with momentum under this class of step sizes.
 \item[(ii)] We demonstrate the superiority of ALR-SHB and ALR-SMAG over state-of-the-art adaptive methods and the popular step-decay step size~\citep{ge2019step}
    on logistic regression and deep neural network training. By incorporating a warmup technique into the upper bound of the step size, the performance of ALR-SHB and ALR-SMAG can be improved further and is better than step-decay.
    \item[(iii)] We incorporate weight-decay into the update of ALR-SMAG to improve the generalization. The algorithm performs better than ALI-G with momentum and step-decay step size and is comparable to cosine step size without restart~\citep{loshchilov2016sgdr}.
\end{enumerate}

\section{Adaptive Step Sizes}\label{sec:model}

 Consider a general first-order method with momentum acceleration on the form 
\begin{align}\label{unified:mom}
     x_{k+1}  = x_k - \eta_k d_k + \gamma (x_k - x_{k-1})
 \end{align}
where $-d_k$ is a descent direction. A natural question that arises is how far we should move in this direction to converge quickly. In theoretical analyses, the quantity $\left\|x - x^{\ast} \right\|^2$ is often used to measure the convergence of the algorithms. We therefore propose to optimize $\eta_k$ to ensure that $x_{k+1}(\eta_k)$ minimizes this quantity, \emph{i.e.},  \begin{align}\label{main:inequ:lr} 
 \min_{\eta_k} \left\| x_{k+1}(\eta_k) - x^{\ast} \right\|^2.
    \end{align}
 Minimizing (\ref{main:inequ:lr}) w.r.t $\eta_k $ suggests that
\begin{align}\label{mom:lr:original}
     \eta_k = \frac{\left\langle d_k, x_k -x^{\ast}\right\rangle}{\left\|d_k \right\|^2} + \gamma \frac{\left\langle d_k, x_k -x_{k-1}\right\rangle}{\left\|d_k \right\|^2}.
 \end{align}
In general, the minimizer $x^{\ast}$ is not accessible. However, when $f$ is convex, we can often evaluate a lower bound of $\left\langle d_k, x_k -x^{\ast}\right\rangle$ and minimize an upper bound of (\ref{main:inequ:lr}) 
\vskip -0.2in 
{\small
\begin{align}\label{main:inequ:lr:upper}
 \hspace{-0.1in} \left\|x_{k+1} -x^{\ast} \right\|^2 & =
    \Vert x_k-x^{\ast}\Vert_2^2 
    + \eta_k^2 \Vert d_k\Vert_2^2  - 2\eta_k \gamma \langle d_k, x_k-x_{k-1}  \rangle \notag\\ 
    &
    -2\eta_k \left\langle d_k, x_k -x^{\ast}\right\rangle + \mathcal{O}(1).
\end{align}
}\hspace{-0.05in}
For example,
if $d_k \in \partial f(x_k)$ and $\gamma=0$, the method of (\ref{unified:mom}) reduces to the subgradient method. By the convexity of $f$, $\left\langle \nabla f(x_k), x_k -x^{\ast} \right\rangle \geq f(x_k) - f^{\ast} $, and minimizing the upper bound of (\ref{main:inequ:lr:upper}) results in the Polyak step size $\eta_k = (f(x_k) - f^{\ast})/\left\| d_k \right\|^2$~\citep{subgradient_mohktar}. Whatever other model we may have that provides a lower bound on the inner product $\left\langle d_k, x_k -x^{\ast} \right\rangle$ will also work in this framework.
In the rest of this paper, we focus on two popular variants of momentum acceleration.

\subsection{Adaption for Heavy-Ball}\label{sec:model:hb}
 We first consider the heavy-ball (HB) method~\citep{polyak1964some, global-hb} given by
 \begin{align}\label{alg:HB}
  x_{k+1} = x_k - \eta_k\nabla f(x_k) + \beta (x_k - x_{k-1}) 
 \end{align}
 where $\beta \in (0,1)$ is a constant. Clearly, heavy ball is a special case of (\ref{unified:mom}) where $\gamma=\beta$ and $d_k = \nabla f(x_k)$.
By the convexity of $f$, we have a lower bound for $\left\langle \nabla f(x_k), x_k -x^{\ast} \right\rangle $ by $f(x_k) - f^{\ast} $ and minimizing the upper bound of  (\ref{main:inequ:lr:upper}) yields the adaptive learning rate for heavy-ball (\emph{ALR-HB})
\begin{align}\label{mad:hb:lr:1}
\eta_k = \frac{f(x_k) - f^{\ast}}{\left\| \nabla f(x_k) \right\|^2} + \beta \frac{\left\langle \nabla f(x_k), x_k -x_{k-1} \right\rangle}{\left\|\nabla f(x_k) \right\|^2}. 
\end{align}

If the objective function $f$ is also $L$-smooth then $\left\langle \nabla f(x_k), x_k -x^{\ast} \right\rangle \geq f(x_k) - f(x^{\ast}) + \frac{1}{2L}\left\|\nabla f(x_k) \right\|^2,$ which is a tighter lower bound for $\left\langle \nabla f(x_k), x_k -x^{\ast} \right\rangle$. It results in the formula (\ref{mad:hb:lr:L}) below, named ALR-HB(v2), which has an additional constant term $1/(2L)$ compared to (\ref{mad:hb:lr:1}):
\begin{align}\label{mad:hb:lr:L}
\eta_k = \frac{1}{2L} + \frac{f(x_k) - f^{\ast}}{\left\| \nabla f(x_k) \right\|^2} + \beta \frac{\left\langle \nabla f(x_k), x_k -x_{k-1} \right\rangle}{\left\|\nabla f(x_k) \right\|^2}.
\end{align}
The ALR-HB algorithms are shown in Algorithm \ref{alg:mad:hb}. Our next example shows that ALR-HB (v2) can find the exact solution for a simple least-squares problem in a single step.  
\begin{example}
Consider one-dimensional least-squares problem $f(x) = \frac{1}{2}hx^2$. For Polyak with gradient descent and $L^4$Mom, we need at least $k = \log_2(x_0/\epsilon)$ steps for an $\epsilon$-accurate solution $(|x-x^{\ast}|\leq \epsilon)$.  For ALR-HB(v2), given $x_0, x_1$, we only need one step to find the exact solution.
\end{example}
\begin{proof}
The step size of ALR-HB(v2) can be written as 
\begin{align*}
    \eta_k & = \frac{1}{2L} + \frac{f(x_k) - f^{\ast}}{\left\|\nabla f(x_k) \right\|^2} + \beta \frac{\left\langle \nabla f(x_k), x_k -x_{k-1} \right\rangle}{\left\|\nabla f(x_k) \right\|^2} \notag \\
    & = \frac{1}{h} + \beta\frac{1}{h}\left( 1- \frac{x_{k-1}}{x_k} \right)
\end{align*}
Applying the step size to the iterate of HB gives 
\begin{align*}
   x_{k+1}  & = x_k - \eta_k h x_k + \beta(x_k -x_{k-1}) = 0.
\end{align*} 
\vskip -0.2in
\end{proof}
\vskip -0.1in
Thus, we believe that the model (\ref{main:inequ:lr}) is a good choice for designing adaptive step sizes for the heavy-ball method. 
\begin{algorithm}[t]
\caption{ALR-HB}\label{alg:mad:hb}
\begin{algorithmic}[1]
   \STATE {\bfseries Input:} initial point $x_1$, $\beta \in (0,1)$, $v_0=\bm{0}$
\WHILE{$x_k$ does not converge do}
\STATE{$ k \leftarrow k+1 $}
 \STATE $\eta_k \leftarrow  \frac{f(x_k) - f(x^{\ast})}{\left\| \nabla f(x_k) \right\|^2} + \beta \frac{\left\langle \nabla f(x_k), x_k -x_{k-1} \right\rangle}{\left\|\nabla f(x_k) \right\|^2}$ {(v1)}; \\  $\eta_k \leftarrow \frac{1}{2L} + \frac{f(x_k) - f^{\ast}}{\left\|\nabla f(x_k) \right\|^2} + \beta \frac{\left\langle \nabla f(x_k), x_k -x_{k-1} \right\rangle}{\left\|\nabla f(x_k) \right\|^2}$ (v2)
      \STATE $v_k \leftarrow  - \eta_k \nabla f(x_k) + \beta v_{k-1}$
		\STATE $x_{k+1} \leftarrow x_k + v_k $
\ENDWHILE
\end{algorithmic}
\end{algorithm}

\subsection{Adaptive Step Size for MAG}\label{sec:model:mag}

Next, we consider the  moving averaged gradient (MAG), another widely used momentum variant for deep learning 
\begin{align}\label{alg:mag}
       d_{k} = \nabla f(x_k) + \beta d_{k-1}, \,\,\,  x_{k+1} = x_k - \eta_k d_{k} 
    \end{align}
where $\beta \in (0,1)$. 
Note that if the step size is constant,  $\eta_k = \eta$, then the formulas (\ref{alg:HB}) and (\ref{alg:mag}) are equivalent. However, we consider adaptive step sizes that change with $k$, and in this case, the two methods are  
different variants of momentum.

If the search direction $d_k$ is defined by (\ref{alg:mag}) and $\gamma=0$, the update of (\ref{unified:mom}) reduces to the MAG algorithm. 
 By the convexity of $f$,  if $\eta_{i} \leq \frac{f(x_{i}) - f^{\ast}}{\left\| d_{i} \right\|^2}$ for all $i\leq k-1$, Lemma \ref{lem:mag:convex} in our subsequent theoretical analysis shows that $\left\langle d_{k-1}, x_k -x^{\ast}\right\rangle \geq 0$. We therefore provide a lower bound for $\left\langle d_k, x_k -x^{\ast}\right\rangle = \left\langle \nabla f(x_k) + \beta d_{k-1}, x_k -x^{\ast} \right\rangle \geq \left\langle \nabla f(x_k), x_k -x^{\ast} \right\rangle  \geq f(x_k) - f^{\ast}$. Minimizing the upper bound of (\ref{main:inequ:lr:upper}) results in step size:
\begin{align}\label{mag:lr}
        \eta_k = \frac{f(x_k) - f^{\ast}}{\left\|d_k \right\|^2}. 
     \end{align}
     We refer to this adaptive momentum version, detailed in  Algorithm \ref{alg:mad:mag}, as ALR-MAG. 
    Lemma \ref{lem:mag:xxstar} in Section \ref{sec:thm:dc} shows that the iterates of ALR-MAG decrease monotonically  w.r.t. the distance $\left\|x -x^{\ast}\right\|^2$. This guarantees that the iterates come closer and closer to the optimum. Our next example in Section~\ref{example:mag} demonstrates that the step size of ALR-MAG is able to avoid oscillations of the heavy-ball method.
\begin{algorithm}[ht]
\caption{ ALR-MAG}\label{alg:mad:mag}
\begin{algorithmic}[1]
   \STATE {\bfseries Input:} initial point $x_1$, $\beta \in (0,1)$, $d_0=\bm{0}$
\WHILE{$x_k$ does not converge}
\STATE{$ k \leftarrow k+1 $}
\STATE $d_k \leftarrow \beta d_{k-1} + \nabla f(x_k)$
 \STATE $
\eta_k \leftarrow  \frac{f(x_k) - f^{\ast}}{\left\| d_k \right\|^2} $
		\STATE $x_{k+1} \leftarrow x_k - \eta_k d_k $
\ENDWHILE
\end{algorithmic}
\end{algorithm}

\subsection{Justification of ALR-MAG}\label{example:mag}
To demonstrate the advantages of ALR-MAG, we consider a simple two-dimensional least-squares problem $f(x, y) = \frac{1}{2}(x-1)^2 +  \frac{\kappa}{2}(y+1)^2$ with $x \in \R$ and $y \in \R$. We set $\kappa=100$ and use the initial point $(x_0, y_0)=(48, -28)$. For the classic heavy-ball method, the iterates can be re-written as $ x_{k+1}  = x_k - \eta_k (x_k-1) + \beta (x_k - x_{k-1});
    y_{k+1}  = y_k - \eta_k \kappa (y_k+1) + \beta (y_k - y_{k-1})$. Note that the variables $x$ and $y$ are updated independently and share the same step size. Our baseline is the optimal parameters for heavy-ball from~\cite{polyak1964some}, 
 $\beta^{\ast}=(\sqrt{\kappa}-1)^2/(\sqrt{\kappa}+1)^2$ and $\eta^{\ast}=(1+\sqrt{{\beta^{\ast}}})^2/L$ (called HB-optimal).
    From Figure~\ref{fig:mag:ls:lr}(left), we observe a pronounced zigzag behavior in the $y$-dimension for HB-optimal. The step size $\eta^{\ast}$ is large and results in an undamped and slow convergence in the dimension with large curvature (\emph{i.e.,} $y$). We apply ALR-MAG with the same $\beta^{\ast}$ and $f^{\ast}=0$.  ALR-MAG adapts the step size to start from a small value to avoid the instability in the $y$-dimension and finally reaches a value that is comparable to $\eta^{\ast}$.


\begin{figure}[ht]
 \includegraphics[width=0.23\textwidth]{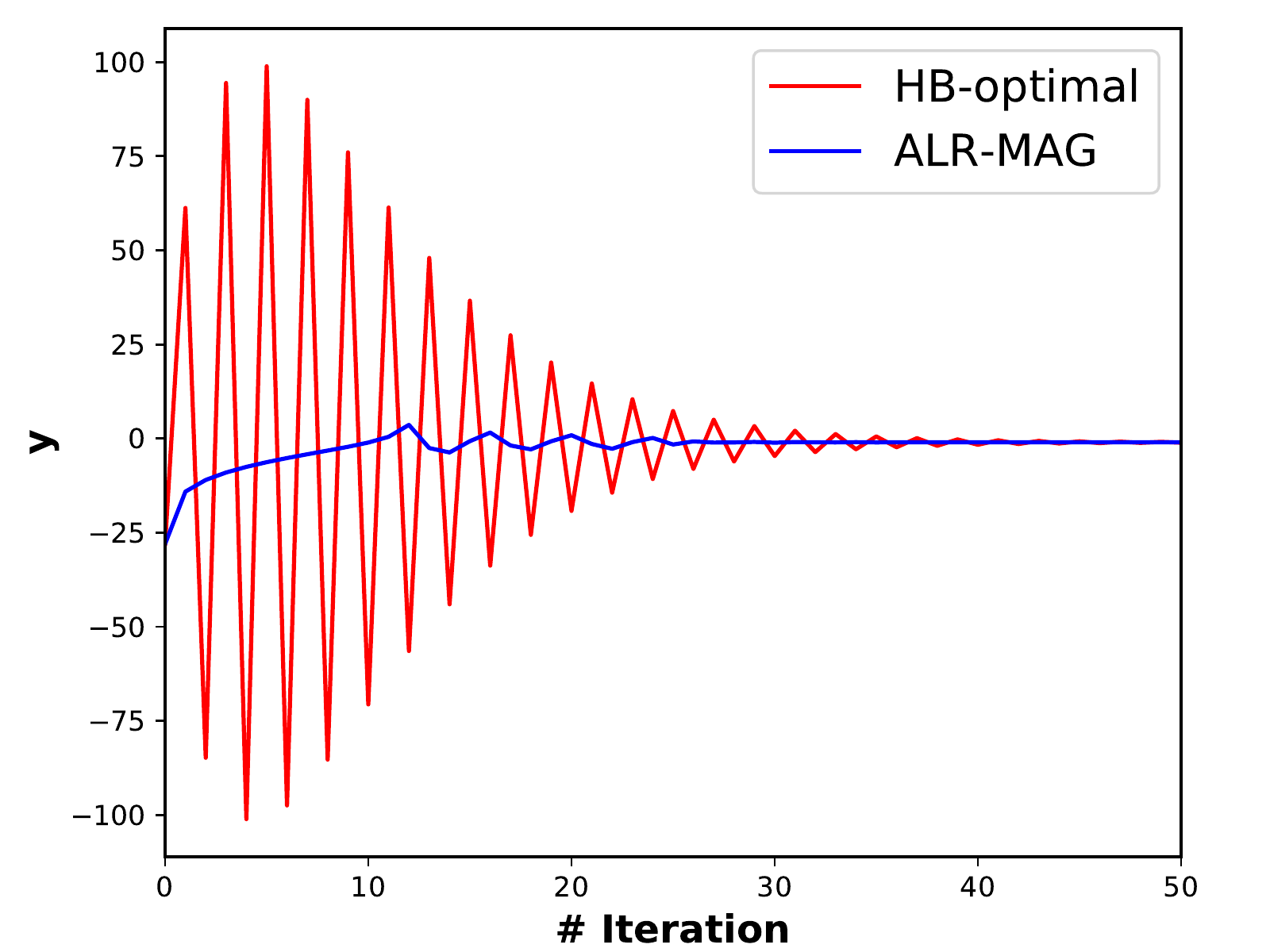}
\hfill 
\includegraphics[width=0.23\textwidth]{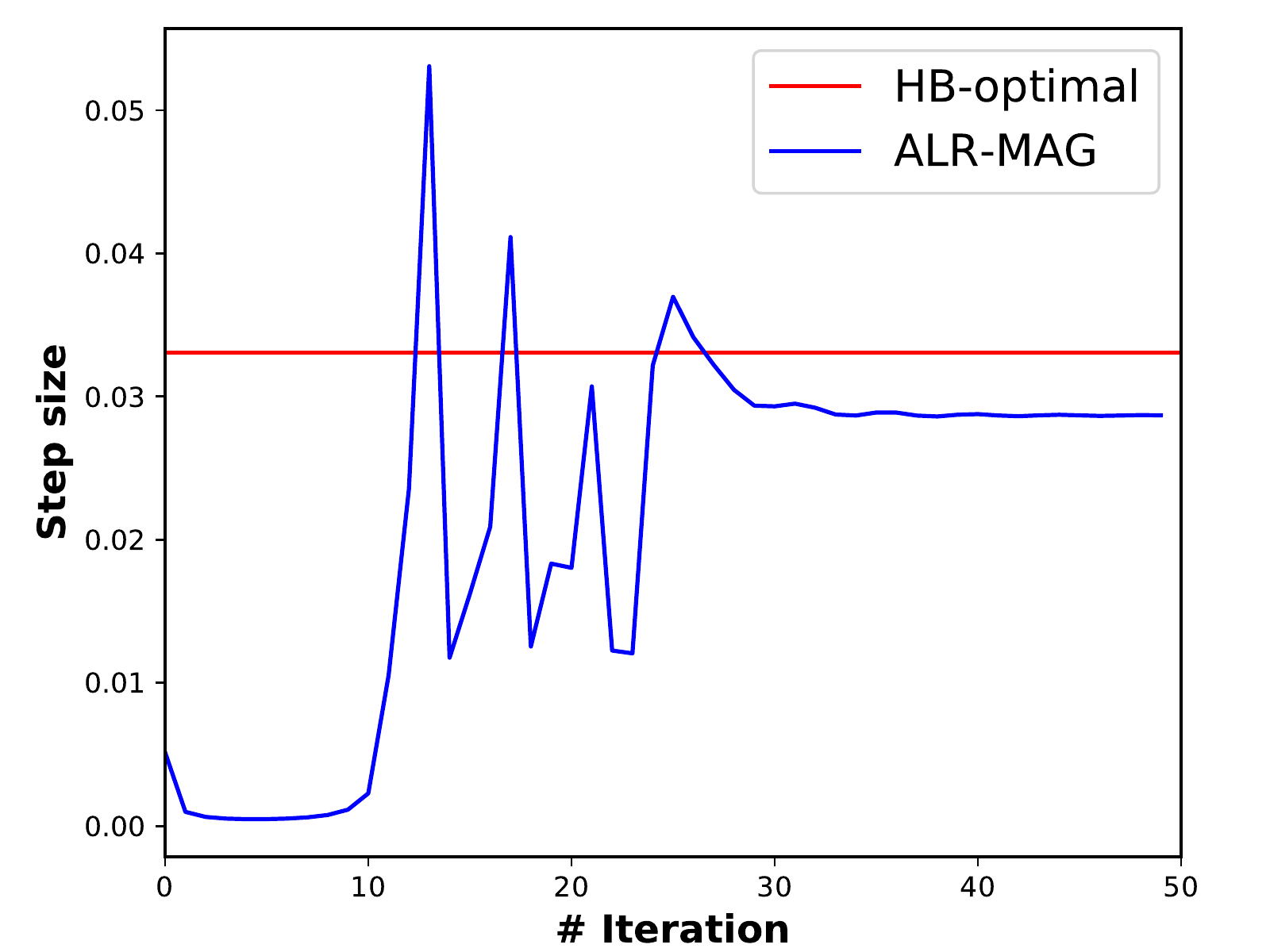}
\vskip -0.1in
\caption{The trace of variable $y$ (left) and the step sizes (right) }
     \label{fig:mag:ls:lr}
     \vskip -0.15in
\end{figure}

\section{Related Work}



 {\bf Adaptive methods for deterministic momentum.} For deterministic problems with $\mu$-strongly convex and $L$-smooth objective functions, \citet{polyak1964some} demonstrated that the fastest local convergence of the heavy-ball method is attained for the optimal parameters  $\beta^{\ast}=(\sqrt{\kappa}+1)^2/(\sqrt{\kappa}+1)^2$ and $\eta^{\ast} =(1+\sqrt{\beta^{\ast}})^2/L$ where $\kappa=L/\mu$ is the condition number. 
 Fast linear convergence is also achieved for Nesterov's accelerated gradient method with $\beta=(\sqrt{\kappa}-1)/(\sqrt{\kappa}+1)$ and $\eta=1/L$~\citep{nesterov2003}.
However, while $L$ is relatively easy to estimate on-line, $\mu$ (and therefore $\kappa$) 
is often inaccessible. A number of recent contributions suggest ideas for approximating these optimal hyper-parameters at each iteration.
\citet{AGM} adaptively estimate the strong convexity
constant by the inverse of the Polyak step size and use this estimate in place of the true $\mu$ in the momentum parameter 
for Nesterov momentum. \citet{AHB} approximate the Lipschitz and strongly convexity constants employing
the absolute differences of current and previous model parameters and their gradients. 
However, its empirical performance, at least in the least-squares problem in Figure \ref{fig:ls:with} (labeled~\emph{AHB}), is poor. \citet{AdSGD} estimate the Lipschitz constant similarly to~\citep{AHB} and the strong convexity constant $\mu$ by the inverse smoothness of the conjugate function. They also add a conservation bound for the estimators of $\mu, L$, leading to a method with four hyperparameters that need to be tuned. \citet{goujaud2022quadratic} develop momentum-based Polyak step sizes for convex quadratic functions. 


{\bf Adaptive step sizes for stochastic algorithms.} \citet{SLS}
 extend line search methods to the stochastic setting (called SLS) using the function and
gradient of a mini-batch and guarantee linear convergence under interpolation. However, many hyper-parameters make it difficult to use in practice.
\citet{AdSGD}
use their estimation technique for $L$ (discussed above) to develop an adaptive step size for SGD (called \emph{AdSGD}). Under interpolation,  the iteration complexity is $\kappa$ times higher than SGD.

{\bf Adaptive gradient methods.} Adaptive gradient methods, such as AdaGrad~\citep{AdaGrad}, RMSProp~\citep{RMSProp}, Adam~\citep{Adam}, and AdamW~\citep{adamw} are very popular in practice. However, adaptive gradient methods have poor generalization compared to SGD in supervising learning tasks~\citep{wilson2017marginal}. \citet{adam_warmup} suggest a learning rate warmup heuristic in the early stage of training that can improve the generalization of adaptive methods.


\section{Preliminaries and Convergence Analysis}\label{sec:theory}

Before presenting our theoretical results, we introduce a few key concepts and the notation used throughout the paper.

\begin{definition}
    $f:\mathbb{R}^d\mapsto \mathbb{R}$ is semi-strongly convex if there exists a constant $\hat{\mu} > 0$ such that $    \frac{\hat{\mu}}{2}\left\|x - x^{\ast} \right\|^2 \leq f(x) - f^{\ast}, \forall x \in \R^d.$
\end{definition}
This condition 
is also called the quadratic growth property of $f$. If the function is convex and smooth,  semi-strong convexity is equivalent to the Polyak-\L{}ojasiewicz (PL) condition~\citep{karimi2016linear}. This is a weaker condition than the strong convexity. The definitions of convexity, strong convexity, and $L$-smoothness are provided in Appendix~\ref{appendix:determinstic}.

{\bf Interpolation.}  We say that the interpolation condition holds if there exists $x^{\ast} \in \mathcal{X}^{\ast}$ such that  individual functions $\min_{x} f(x;\xi) = f(x^{\ast}; \xi)$ for all $\xi \in \Xi$. All loss functions $f(x;\xi)$ meet with a common minimizer $x^{\ast}$. The interpolation property is satisfied in many machine learning models, including linear classifiers with separable data, over-parameterized deep neural networks~\citep{ma2018power,zhang_interpolation}, non-parametric regression~\citep{liang2020just}, and boosting~\citep{bartlett1998boosting}.  


\subsection{ALR-MAG in Deterministic Optimization} \label{sec:thm:dc}
We first provide convergence guarantees for the ALR-MAG method on deterministic convex optimization problems where the exact gradient and function values are available.

Our first lemma shows that the direction $d_{k-1}$ forms an acute angle with the direction from $x_k$ to the minimizer $x^{\ast}$.
\begin{lemma}\label{lem:mag:convex}
Let $f$ be convex and assume that $\{x_i\}_{i=0}^k$ has been generated by MAG with $\eta_i \leq (f(x_i) - f^{\ast})/\left\|d_i \right\|^2$ for all $i\leq k-1$. Then,  
$\left\langle d_{k-1}, x_k -x^{\ast}\right\rangle \geq 0$.
\end{lemma}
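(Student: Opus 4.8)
The plan is to prove the claim by induction on the iteration index, establishing the statement $P(j):\ \left\langle d_{j-1}, x_j - x^{\ast}\right\rangle \geq 0$ for every $j \leq k$. The base case $P(1)$ is immediate: since Algorithm~\ref{alg:mad:mag} initializes $d_0 = \bm{0}$, the inner product $\left\langle d_0, x_1 - x^{\ast}\right\rangle$ vanishes and is trivially nonnegative.

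For the inductive step, I would assume $P(k-1)$ and aim to establish $P(k)$. The first move is to substitute the MAG iterate update $x_k = x_{k-1} - \eta_{k-1} d_{k-1}$ into the target inner product, which gives
\[
\left\langle d_{k-1}, x_k - x^{\ast}\right\rangle = \left\langle d_{k-1}, x_{k-1} - x^{\ast}\right\rangle - \eta_{k-1}\left\|d_{k-1}\right\|^2.
\]
Next I would expand the first term on the right using the momentum recursion $d_{k-1} = \nabla f(x_{k-1}) + \beta d_{k-2}$, splitting it into a gradient contribution $\left\langle \nabla f(x_{k-1}), x_{k-1} - x^{\ast}\right\rangle$ and a momentum contribution $\beta\left\langle d_{k-2}, x_{k-1} - x^{\ast}\right\rangle$.

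The two contributions are handled by different tools. For the gradient term, convexity of $f$ yields $\left\langle \nabla f(x_{k-1}), x_{k-1} - x^{\ast}\right\rangle \geq f(x_{k-1}) - f^{\ast}$. For the momentum term, the induction hypothesis $P(k-1)$ gives $\left\langle d_{k-2}, x_{k-1} - x^{\ast}\right\rangle \geq 0$, and since $\beta \in (0,1)$ the entire contribution is nonnegative. Combining these, $\left\langle d_{k-1}, x_{k-1} - x^{\ast}\right\rangle \geq f(x_{k-1}) - f^{\ast}$. Finally, I would plug this lower bound back and invoke the step-size hypothesis $\eta_{k-1} \leq (f(x_{k-1}) - f^{\ast})/\left\|d_{k-1}\right\|^2$, equivalently $\eta_{k-1}\left\|d_{k-1}\right\|^2 \leq f(x_{k-1}) - f^{\ast}$, to conclude $\left\langle d_{k-1}, x_k - x^{\ast}\right\rangle \geq (f(x_{k-1}) - f^{\ast}) - (f(x_{k-1}) - f^{\ast}) = 0$, which closes the induction.

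The only delicate point is arranging the induction so that it self-references correctly: nonnegativity at step $k$ must draw on the same type of nonnegativity at step $k-1$, which enters through the accumulated momentum term $d_{k-2}$ inside $d_{k-1}$. Once the indices are aligned so that $P(k-1)$ supplies exactly the bound $\left\langle d_{k-2}, x_{k-1} - x^{\ast}\right\rangle \geq 0$ that is needed, the remainder is a routine combination of convexity and the step-size constraint, with no genuine analytic obstacle. I would also note that the hypothesis $\eta_i \leq (f(x_i) - f^{\ast})/\left\|d_i\right\|^2$ for all $i \leq k-1$ is precisely what makes $\eta_{k-1}$ admissible in the final inequality, so the range of the assumption matches the range of the induction.
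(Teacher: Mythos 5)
Your proof is correct and follows essentially the same route as the paper's: induction on the iterate index with base case $d_0 = \bm{0}$, expansion of $x_k = x_{k-1} - \eta_{k-1} d_{k-1}$ and $d_{k-1} = \nabla f(x_{k-1}) + \beta d_{k-2}$, then convexity for the gradient term, the induction hypothesis for the momentum term, and the step-size bound $\eta_{k-1}\left\|d_{k-1}\right\|^2 \leq f(x_{k-1}) - f^{\ast}$ to close the argument. The only difference is presentational (you isolate $\left\langle d_{k-1}, x_{k-1} - x^{\ast}\right\rangle$ before expanding, while the paper runs the chain in one pass), so there is nothing substantive to add.
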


The next lemma establishes that the MAG iterates under the step size (\ref{mag:lr}) are monotone decreasing with respect to the distance $\left\|x - x^{\ast} \right\|^2$. This guarantees that the next iterate $x_{k+1}$ is closer to the minimizer $x^{\ast}$ than the current $x_k$.
 \begin{lemma}\label{lem:mag:xxstar}
 Let $\{x_k\}$ be generated by MAG with the step size defined in (\ref{mag:lr}). Then, if $f$ is convex $$ 
\left\| x_{k+1} -x^{\ast} \right\|^2 \leq \left\|x_{k} - x^{\ast} \right\|^2 -  \eta_k \left(f(x_k) - f^{\ast} \right).$$
If, in addition, $f$ is $L$-smooth, then  
   \small{
\begin{align*}
\hspace{-0.1in} 
\left\| x_{k+1} -x^{\ast} \right\|^2 \leq  \left\|x_{k} - x^{\ast} \right\|^2 - \left(\eta_k  +  \frac{(1-\beta)}{L} \right)\left(f(x_k) - f^{\ast} \right).
    \end{align*}}
     \end{lemma}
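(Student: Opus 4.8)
The plan is to handle both inequalities through the single squared-distance expansion that motivates the method. Since $x_{k+1}=x_k-\eta_k d_k$, I would write
\[
\|x_{k+1}-x^\ast\|^2 = \|x_k-x^\ast\|^2 - 2\eta_k\langle d_k,x_k-x^\ast\rangle + \eta_k^2\|d_k\|^2 ,
\]
and then exploit the defining identity of the step size \ref{mag:lr}, namely $\eta_k\|d_k\|^2 = f(x_k)-f^\ast$, so that $\eta_k^2\|d_k\|^2 = \eta_k(f(x_k)-f^\ast)$. After this substitution the whole statement reduces to finding a good lower bound on the cross term $\langle d_k,x_k-x^\ast\rangle$; everything else is bookkeeping. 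For the convex claim I would reuse the estimate already recorded before \ref{mag:lr}: decomposing $d_k=\nabla f(x_k)+\beta d_{k-1}$, Lemma~\ref{lem:mag:convex} gives $\beta\langle d_{k-1},x_k-x^\ast\rangle\ge 0$, and convexity gives $\langle\nabla f(x_k),x_k-x^\ast\rangle\ge f(x_k)-f^\ast$, so $\langle d_k,x_k-x^\ast\rangle\ge f(x_k)-f^\ast$. Inserting this into the expansion yields $\|x_{k+1}-x^\ast\|^2\le\|x_k-x^\ast\|^2-\eta_k(f(x_k)-f^\ast)$.

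For the $L$-smooth refinement, the same reduction shows that it suffices to prove the sharper cross-term bound
\[
\langle d_k,x_k-x^\ast\rangle \ge (f(x_k)-f^\ast) + \frac{1-\beta}{2L}\|d_k\|^2 ,
\]
because multiplying the extra term by $2\eta_k$ and using $\eta_k\|d_k\|^2=f(x_k)-f^\ast$ produces exactly the additional decrease $\tfrac{1-\beta}{L}(f(x_k)-f^\ast)$. I would establish this bound by induction on $k$, taking the displayed inequality itself as the hypothesis. The base case is immediate: $d_0=\bm 0$ forces $d_1=\nabla f(x_1)$, and the smooth-convex inequality $\langle\nabla f(x_1),x_1-x^\ast\rangle\ge f(x_1)-f^\ast+\tfrac{1}{2L}\|\nabla f(x_1)\|^2$ is stronger than required since $1-\beta\le 1$.

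The inductive step is the heart of the argument. Using $x_k=x_{k-1}-\eta_{k-1}d_{k-1}$, I would rewrite $\langle d_{k-1},x_k-x^\ast\rangle=\langle d_{k-1},x_{k-1}-x^\ast\rangle-\eta_{k-1}\|d_{k-1}\|^2$; the term $\eta_{k-1}\|d_{k-1}\|^2=f(x_{k-1})-f^\ast$ cancels the corresponding term in the induction hypothesis at $k-1$, leaving $\langle d_{k-1},x_k-x^\ast\rangle\ge\tfrac{1-\beta}{2L}\|d_{k-1}\|^2$. Combining this with the smooth-convex inequality at $x_k$ and the decomposition $d_k=\nabla f(x_k)+\beta d_{k-1}$, the target at step $k$ reduces, after expanding $\|d_k\|^2=\|\nabla f(x_k)\|^2+2\beta\langle\nabla f(x_k),d_{k-1}\rangle+\beta^2\|d_{k-1}\|^2$ and collecting terms, to showing $\|\nabla f(x_k)\|^2+\beta(1-\beta)\|d_{k-1}\|^2\ge(1-\beta)\|d_k\|^2$, which rearranges to the manifestly true $\beta\|\nabla f(x_k)-(1-\beta)d_{k-1}\|^2\ge 0$. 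Feeding the sharpened cross-term bound back into the squared-distance expansion then gives the second claim.

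I expect the main obstacle to be discovering the correct sharpened bound and, equivalently, the correct induction hypothesis. A naive one-step argument that only uses $\langle d_{k-1},x_k-x^\ast\rangle\ge 0$ from Lemma~\ref{lem:mag:convex} together with the co-coercivity term $\tfrac{1}{2L}\|\nabla f(x_k)\|^2$ leaves a residual $\tfrac{\eta_k}{L}\|\nabla f(x_k)\|^2$, and this cannot be bounded below by $\tfrac{1-\beta}{L}(f(x_k)-f^\ast)$: that would require $\|\nabla f(x_k)\|^2\ge(1-\beta)\|d_k\|^2$, which fails (even in the constant-gradient regime $\|d_k\|^2\to\|\nabla f\|^2/(1-\beta)^2$). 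Carrying the $\tfrac{1-\beta}{2L}\|d_k\|^2$ term inside the induction is precisely what makes the recursion close, and confirming that it closes is exactly the perfect-square reduction above.
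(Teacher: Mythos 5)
Your proof is correct, and it organizes the smooth case differently from the paper. The convex half is identical to the paper's argument. For the smooth half, the paper runs an induction on the claim $\left\langle d_{k-1}, x_k - x^{\ast}\right\rangle \geq \frac{1}{2L}\sum_{i=1}^{k-1}\beta^{k-1-i}\left\|\nabla f(x_i)\right\|^2$, carrying a geometric sum of past gradient norms through the recursion, and then needs a second, separately proven inequality (\ref{dk:nablaf}), namely $\left\|d_k\right\|^2 \leq \frac{1}{1-\beta}\sum_{i=1}^{k}\beta^{k-i}\left\|\nabla f(x_i)\right\|^2$ (itself established by Young's inequality and another induction), to convert that sum into the extra decrease $\frac{1-\beta}{L}\left(f(x_k)-f^{\ast}\right)$. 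You instead strengthen the induction hypothesis to $\left\langle d_k, x_k - x^{\ast}\right\rangle \geq f(x_k) - f^{\ast} + \frac{1-\beta}{2L}\left\|d_k\right\|^2$, which keeps $\left\|d_k\right\|^2$ in the invariant directly and dispenses with the intermediate weighted sum; your recursion then closes via the perfect square $\beta\left\|\nabla f(x_k) - (1-\beta)d_{k-1}\right\|^2 \geq 0$. It is worth noting that this perfect square is algebraically the same fact as the paper's one-step bound $\left\|d_k\right\|^2 \leq \beta\left\|d_{k-1}\right\|^2 + \frac{1}{1-\beta}\left\|\nabla f(x_k)\right\|^2$ (Young's inequality with parameter $\tau = 1-\beta$ is exactly that square), so both proofs rest on the same three ingredients: the smooth-convex inequality at $x_k$, the step-size identity $\eta_k\left\|d_k\right\|^2 = f(x_k)-f^{\ast}$, and this one-step recursion for $\left\|d_k\right\|^2$. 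What your version buys is economy and transparency: a single merged induction instead of two, and the mechanism forcing the $(1-\beta)$ factor is visible in one line rather than emerging from the combination of two lemmas. Your closing observation about why the naive bound $\left\langle d_{k-1}, x_k - x^{\ast}\right\rangle \geq 0$ cannot yield the smooth refinement is also accurate, and explains why the paper, too, must carry extra information through its induction.
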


From Lemma \ref{lem:mag:xxstar}, we can note that if $f$ is smooth, there is an extra decrease compared to when $f$ is only convex. If the function is also semi-strongly convex, then we have the following linear convergence result.
\begin{theorem}
\label{thm:mag:smooth}
Suppose that function $f$ is convex and $L$-smooth and consider the ALR-MAG algorithm under the step size (\ref{mag:lr}).  
If $f$ is semi-strongly convex with $\hat{\mu} >0$, then
\begin{align}
    \left\|x_k -x^{\ast} \right\|^2 \leq (1-\rho)^k \left\|x_1-x^{\ast} \right\|^2
\end{align}
where $\rho = (1-\beta)(2\kappa)^{-1}$ and $\kappa = L/\hat{\mu}$. 
\end{theorem}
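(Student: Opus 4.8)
The plan is to convert the smooth descent estimate of Lemma~\ref{lem:mag:xxstar} into a one-step linear contraction and then unroll it. Since ALR-MAG uses $\eta_k = (f(x_k)-f^\ast)/\|d_k\|^2$ exactly, the hypothesis $\eta_i \le (f(x_i)-f^\ast)/\|d_i\|^2$ required by Lemmas~\ref{lem:mag:convex} and~\ref{lem:mag:xxstar} holds with equality at every step. Hence both lemmas apply throughout the run, and there is nothing to verify inductively beyond what they already guarantee.

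Starting from the $L$-smooth inequality in Lemma~\ref{lem:mag:xxstar},
\begin{equation*}
\|x_{k+1}-x^\ast\|^2 \le \|x_k-x^\ast\|^2 - \left(\eta_k + \tfrac{1-\beta}{L}\right)\left(f(x_k)-f^\ast\right),
\end{equation*}
I would first observe that $\eta_k \ge 0$, since $f(x_k)-f^\ast \ge 0$ ($f^\ast$ being the minimum) and $\|d_k\|^2 > 0$ for any non-optimal iterate, so the step is well defined. Discarding the nonnegative $\eta_k$ term leaves the clean estimate $\|x_{k+1}-x^\ast\|^2 \le \|x_k-x^\ast\|^2 - \frac{1-\beta}{L}(f(x_k)-f^\ast)$. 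The point of routing through the smooth version rather than the plain convex one is precisely this: the coefficient $\frac{1-\beta}{L}$ is independent of the momentum-laden quantity $\|d_k\|^2$, whereas the convex bound $\|x_{k+1}-x^\ast\|^2 \le \|x_k-x^\ast\|^2 - (f(x_k)-f^\ast)^2/\|d_k\|^2$ would force me to control $\|d_k\|^2$, where the accumulated momentum makes a tight bound awkward.

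I would then invoke semi-strong convexity, $\frac{\hat\mu}{2}\|x_k-x^\ast\|^2 \le f(x_k)-f^\ast$, to replace the function gap by a multiple of the distance, giving
\begin{equation*}
\|x_{k+1}-x^\ast\|^2 \le \left(1 - \frac{(1-\beta)\hat\mu}{2L}\right)\|x_k-x^\ast\|^2 = (1-\rho)\,\|x_k-x^\ast\|^2,
\end{equation*}
after identifying $\rho = (1-\beta)\hat\mu/(2L) = (1-\beta)/(2\kappa)$ with $\kappa = L/\hat\mu$. Chaining this one-step contraction over the iterates then yields the geometric decay $\|x_k-x^\ast\|^2 \le (1-\rho)^{k-1}\|x_1-x^\ast\|^2$, matching the claimed rate up to the convention for where the iteration count begins.

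Given the two lemmas, there is no real obstacle in the theorem itself: the argument is a textbook contraction once the smooth descent inequality is in hand. The genuine difficulty is upstream, in establishing the $L$-smooth version of Lemma~\ref{lem:mag:xxstar}, whose extra $\frac{1-\beta}{L}$ term is exactly what makes $\rho$ degrade only by the factor $(1-\beta)$ relative to plain gradient descent. That is where the acute-angle property $\langle d_{k-1}, x_k-x^\ast\rangle \ge 0$ from Lemma~\ref{lem:mag:convex} is needed to tame the momentum direction, and it is the step I would expect to demand the most care.
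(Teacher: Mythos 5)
Your proof is correct and follows essentially the same route as the paper: invoke the $L$-smooth inequality of Lemma~\ref{lem:mag:xxstar}, discard the nonnegative $\eta_k$ term, apply semi-strong convexity, and unroll the contraction (the paper merely applies semi-strong convexity before dropping $\eta_k$, which is equivalent). The only discrepancy is the trivial indexing one you already flag, namely $(1-\rho)^{k-1}$ versus the stated $(1-\rho)^k$ when counting from $x_1$.
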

\citet{brannlund1995generalized} generalizes the subgradient method to use a convex combination of previous subgradients. Hence, the MAG algorithm (\ref{alg:mag}) can be seen as a special case of \citet{brannlund1995generalized}.
Since the sequence $\left\| x_{k+1} -x^{\ast} \right\|^2 $ is monotone decreasing, it is enough to require $L$-smoothness for all $x$ with $\Vert x-x^{\ast}\Vert \leq \Vert x_0-x^{\ast}\Vert$, which matches the smoothness assumption of Theorem 2.6 in \citep{brannlund1995generalized}. However, compared to~\citep{brannlund1995generalized},  we do not have the extra restriction $\left\|d_k \right\|\leq \left\|\nabla f(x_k) \right\|$. In fact, this requirement on $d_k$ is, in general, not satisfied for $\beta \in (0,1)$. Note that Theorem \ref{thm:mag:smooth} improves the dependence of the condition number $\kappa$ from $\kappa^2$ to $\kappa$ in the convergence of   \citep{brannlund1995generalized,shor2012minimization} and only requires semi-strong convexity (while Theorem 2.12 in \citep{shor2012minimization} assumes strong convexity). More results of ALR-MAG for functions without smoothness are provided in Appendix~\ref{appendix:determinstic}.


\subsection{Convergence of ALR-MAG in Stochastic Settings}\label{sec:thm:sc}
Next, we extend the adaptive step size for HB and MAG to the case when gradients and function values are sampled from an underlying (data) distribution. It is typically not practical to compute the exact function value and gradient in every step. Instead, we evaluate a mini-batch $S_k$ of gradient and function value samples in each iteration
\begin{align*}
 \hspace{-0.1in}   f_{S_k}(x) = \frac{1}{|S_k|}\sum_{i \in S_k} f(x; \xi_i),  \nabla f_{S_k}(x) =  \frac{1}{|S_k|} \sum_{i \in S_k}\nabla f(x; \xi_i)
\end{align*}
and propose to use the following adaption of (\ref{mag:lr}):
{\small \begin{align}\label{mag:lr:sc}
    \eta_k = \min \left\lbrace \frac{f_{S_k}(x_k) - f_{S_k}^{\ast}}{c\left\|d_k \right\|^2}, \eta_{\max} \right\rbrace 
\end{align}}
\hspace{-0.05in}Here, $d_k = \beta d_{k-1} + \nabla f_{S_k}(x_k)$ and $f_{S_k}^{\ast} = \inf_x f_{S_k}(x)$. We refer to the stochastic version of MAG  as \emph{SMAG}, and as \emph{ALR-SMAG} when we use the adaptive step size (\ref{mag:lr:sc}). 

Their step size (\ref{mag:lr:sc}) has three modifications compared to (\ref{mag:lr}). First, while the immediate extension of (\ref{mag:lr}) would replace $f^{\ast}=f(x^{\ast})$ by $f_{S_k}(x^{\ast})$, we suggest to use $f_{S_k}^{\ast}$ instead. For example, in many machine learning problems with unregularized surrogate loss functions, we have $f_{S_k}^{\ast}=0$~\citep{peter_convexity_classifi}. For the loss with regularization for example $\ell_2$ regularization, when the mini-batches contain a single data, then  $f_{S_k}^{\ast}$ can be computed in a closed form for some standard loss function~\citep{SPS,peter_convexity_classifi}. Second, we introduce a hyper-parameter $c >0$ that controls the scale of the step size to account for the inaccuracy in function and gradients. 
Third, 
due to the convergence reason and to make it applicable to wide applications even nonconvex problems, we may restrict the step size to be upper bounded by $\eta_{\max} > 0$.

In the following results, we assume the finite optimal objective function difference which has been used in the analysis of stochastic Polyak step size~\citep{SPS}. 
\begin{assumption}({\bf Finite optimal objective difference})\label{assumpt:noise:fstar}
\begin{align*}
        \sigma^2 = \E[f_{S_k}(x^{\ast}) - f_{S_k}^{\ast}] = f(x^{\ast}) - \E[f_{S_k}^{\ast}] < +\infty
\end{align*}
where $f_{S_k}^{\ast} = \inf f_{S_k}(x)$.
\end{assumption}
Under interpolation, each individual function $f(x;\xi)$ attains its optimum at $x^{\ast}$ which implies that $\sigma = 0$. We focus on semi-strongly convex and smooth functions.

\begin{theorem}
\label{thm:semi:sc}
Suppose that the individual function $f(x;\xi)$ is convex and $L$-smooth for any $\xi \in \Xi$ and that Assumption \ref{assumpt:noise:fstar} holds. Consider ALR-SMAG with $c > 1$, if $f$ is semi-strongly convex with $\hat{\mu}$, then 
\begin{align*}
  \E[\left\|x_{K+1} - x^{\ast} \right\|^2] \leq \left(1- \rho_1\right)^K\left\|x_{1} - x^{\ast} \right\|^2  +\frac{2\eta_{\max} \sigma^2}{\rho_1(1-\beta)}
\end{align*}
where $\rho_1 = \min\left\lbrace \frac{(1-\beta)\left(c-1 \right)\hat{\mu}}{2c^2L} , \frac{(2c-1)\hat{\mu} \eta_{\max}}{2c}\right\rbrace$.
\end{theorem}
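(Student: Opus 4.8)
The plan is to mirror the deterministic analysis of ALR-MAG (Lemmas~\ref{lem:mag:convex} and~\ref{lem:mag:xxstar}) at the level of a single step and then convert the resulting descent inequality into a contraction via semi-strong convexity, absorbing the mismatch between mini-batch and population quantities into the $\sigma^2$ term of Assumption~\ref{assumpt:noise:fstar}. Concretely, I would first expand
\begin{align*}
\|x_{k+1}-x^{\ast}\|^2 = \|x_k-x^{\ast}\|^2 - 2\eta_k\langle d_k, x_k-x^{\ast}\rangle + \eta_k^2\|d_k\|^2,
\end{align*}
and treat the three pieces separately. Writing $d_k = \beta d_{k-1} + \nabla f_{S_k}(x_k)$, I would split the cross term as $\langle d_k, x_k-x^{\ast}\rangle = \beta\langle d_{k-1},x_k-x^{\ast}\rangle + \langle \nabla f_{S_k}(x_k), x_k-x^{\ast}\rangle$ and lower-bound the second summand by convexity of $f_{S_k}$, namely $\langle \nabla f_{S_k}(x_k), x_k-x^{\ast}\rangle \geq f_{S_k}(x_k)-f_{S_k}(x^{\ast})$. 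For the quadratic term I would use that the step-size cap forces $\eta_k\|d_k\|^2 \leq (f_{S_k}(x_k)-f_{S_k}^{\ast})/c$ in every case, so that $\eta_k^2\|d_k\|^2 \leq \eta_k(f_{S_k}(x_k)-f_{S_k}^{\ast})/c$; here the requirement $c>1$ leaves the slack that eventually produces the factor $(c-1)$ in $\rho_1$.

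Second, I would control the momentum memory term $\langle d_{k-1}, x_k-x^{\ast}\rangle$. In the deterministic case Lemma~\ref{lem:mag:convex} gives exactly $\langle d_{k-1}, x_k-x^{\ast}\rangle \geq 0$, and its proof is an induction resting on $\eta_i\|d_i\|^2 \leq f(x_i)-f^{\ast}$; the cap with $c>1$ guarantees the analogous inequality $\eta_i\|d_i\|^2 \leq (f_{S_i}(x_i)-f_{S_i}^{\ast})/c$ for the mini-batch functions. The difficulty is that the stochastic induction now produces terms $f_{S_i}(x_i)-f_{S_i}(x^{\ast}) = (f_{S_i}(x_i)-f_{S_i}^{\ast}) - (f_{S_i}(x^{\ast})-f_{S_i}^{\ast})$, whose second summand has nonzero expectation $\sigma^2$; hence the clean sign $\langle d_{k-1},x_k-x^{\ast}\rangle\geq 0$ degrades into an inequality carrying a noise correction. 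I expect this step, together with the accumulation of these corrections through the geometric momentum weights $\sum_j\beta^{k-j}\leq 1/(1-\beta)$, to be the main obstacle; it is also what injects the factor $1/(1-\beta)$ into both the rate $\rho_1$ and the additive error $2\eta_{\max}\sigma^2/(\rho_1(1-\beta))$.

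Third, I would turn the descent inequality into a geometric contraction. Collecting the above into a one-step bound of the shape
\begin{align*}
\|x_{k+1}-x^{\ast}\|^2 \leq \|x_k-x^{\ast}\|^2 - \tfrac{(2c-1)}{c}\,\eta_k\bigl(f_{S_k}(x_k)-f_{S_k}^{\ast}\bigr) + (\text{noise}),
\end{align*}
I would lower-bound $\eta_k(f_{S_k}(x_k)-f_{S_k}^{\ast})$ by a multiple of $f_{S_k}(x_k)-f_{S_k}^{\ast}$ through a two-case analysis matching the $\min$ in the step size: in the capped regime $\eta_k=\eta_{\max}$ directly, and in the adaptive regime $\eta_k=(f_{S_k}(x_k)-f_{S_k}^{\ast})/(c\|d_k\|^2)$ after bounding $\|d_k\|^2$ via $L$-smoothness, $\|\nabla f_{S_j}(x_j)\|^2 \leq 2L(f_{S_j}(x_j)-f_{S_j}^{\ast})$, together with the momentum recursion $\|d_k\|^2 \leq \beta\|d_{k-1}\|^2 + \|\nabla f_{S_k}(x_k)\|^2/(1-\beta)$. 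These two regimes produce the two entries of the $\min$ defining $\rho_1$. Taking conditional expectation, using $\E[f_{S_k}(x_k)\mid x_k]=f(x_k)$ and $\E[f_{S_k}^{\ast}]=f(x^{\ast})-\sigma^2$ from Assumption~\ref{assumpt:noise:fstar}, and applying semi-strong convexity $f(x_k)-f^{\ast}\geq \tfrac{\hat\mu}{2}\|x_k-x^{\ast}\|^2$ yields a recursion $\E\|x_{k+1}-x^{\ast}\|^2 \leq (1-\rho_1)\E\|x_k-x^{\ast}\|^2 + b$ with $b=2\eta_{\max}\sigma^2/(1-\beta)$. Unrolling over $k=1,\dots,K$ and summing the geometric series $\sum_{i\geq 0}(1-\rho_1)^i = 1/\rho_1$ then gives the stated bound.
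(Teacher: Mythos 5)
Your plan follows the paper's proof almost step for step — the same expansion of $\|x_{k+1}-x^{\ast}\|^2$, the same inductive momentum lemma whose noise corrections $f_{S_i}(x^{\ast})-f_{S_i}^{\ast}$ accumulate through the geometric weights (this is exactly the paper's stochastic analogue of Lemma~\ref{lem:mag:convex}), the same two-case split on the $\min$, and the same expectation/semi-strong-convexity/unrolling finish. However, there is one genuine gap, and it sits precisely at the point that makes SMAG harder than SPS. In the adaptive regime you cannot lower-bound $\eta_k\bigl(f_{S_k}(x_k)-f_{S_k}^{\ast}\bigr)$ by a \emph{constant} multiple of $f_{S_k}(x_k)-f_{S_k}^{\ast}$, which is what your one-step bound of the shape $-\tfrac{2c-1}{c}\eta_k\bigl(f_{S_k}(x_k)-f_{S_k}^{\ast}\bigr)+(\text{noise})$ requires. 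When $\beta=0$ this works because $\|d_k\|^2=\|\nabla f_{S_k}(x_k)\|^2\leq 2L\bigl(f_{S_k}(x_k)-f_{S_k}^{\ast}\bigr)$ yields the uniform bound $\eta_k\geq 1/(2cL)$. When $\beta>0$, your own chain of inequalities only gives $\|d_k\|^2\leq \frac{2L}{1-\beta}\sum_{i\leq k}\beta^{k-i}\bigl(f_{S_i}(x_i)-f_{S_i}^{\ast}\bigr)$, a weighted sum of \emph{past} gaps: if earlier gaps were large while the current gap is tiny, then $\eta_k=\bigl(f_{S_k}(x_k)-f_{S_k}^{\ast}\bigr)/(c\|d_k\|^2)$ is arbitrarily small, and $\eta_k\bigl(f_{S_k}(x_k)-f_{S_k}^{\ast}\bigr)$ is quadratically, not linearly, small in the current gap. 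So the contraction with rate $\frac{(1-\beta)(c-1)\hat{\mu}}{2c^2L}$ cannot be extracted from the current-gap term alone; this is the very issue the paper flags when it says $\|d_k\|$ is not comparable to $\|\nabla f_{S_k}(x_k)\|$.

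The repair — and this is what the paper actually does — is to \emph{keep} the positive accumulated-gap part of the momentum lemma rather than relegating it to the discarded slack: use
$\langle d_k,x_k-x^{\ast}\rangle \geq \bigl(f_{S_k}(x_k)-f_{S_k}^{\ast}\bigr)+\bigl(1-\tfrac{1}{c}\bigr)\sum_{i<k}\beta^{k-i}\bigl(f_{S_i}(x_i)-f_{S_i}^{\ast}\bigr)-\sum_{i\leq k}\beta^{k-i}\bigl(f_{S_i}(x^{\ast})-f_{S_i}^{\ast}\bigr)$
in full when multiplying by $-2\eta_k$. In the adaptive regime the retained sum gets multiplied by $\frac{f_{S_k}(x_k)-f_{S_k}^{\ast}}{c\|d_k\|^2}$, and the decisive cancellation is
\begin{align*}
\frac{\sum_{i\leq k}\beta^{k-i}\bigl(f_{S_i}(x_i)-f_{S_i}^{\ast}\bigr)}{\|d_k\|^2}
\;\geq\;
\frac{\frac{1}{2L}\sum_{i\leq k}\beta^{k-i}\|\nabla f_{S_i}(x_i)\|^2}{\frac{1}{1-\beta}\sum_{i\leq k}\beta^{k-i}\|\nabla f_{S_i}(x_i)\|^2}
\;=\;\frac{1-\beta}{2L},
\end{align*}
i.e., the \emph{same} weighted sum of gradient norms controls the numerator (via $\|\nabla f_{S_i}(x_i)\|^2\leq 2L(f_{S_i}(x_i)-f_{S_i}^{\ast})$) and the denominator (via the momentum recursion $\|d_k\|^2\leq\frac{1}{1-\beta}\sum_{i\leq k}\beta^{k-i}\|\nabla f_{S_i}(x_i)\|^2$), so the large past contributions cancel instead of hurting. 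This is what leaves a clean multiple of the current gap and produces the factor $\frac{(1-\beta)(c-1)}{c^2L}$; note the $(c-1)$, which is the coefficient $1-\tfrac{1}{c}$ of the retained momentum term, not the $\tfrac{2c-1}{c}$ of your current-gap term. Your capped regime, your identification of the noise constant $2\eta_{\max}\sigma^2/(1-\beta)$, and the final unrolling are all correct as stated.
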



When $\beta=0$, step size (\ref{mag:lr:sc}) reduces to SPS\_${\max}$. Our result in Theorem \ref{thm:semi:sc} is comparable to theorem 3.1 of SPS\_${\max}$ for strongly convex functions. However, the numerical results show the superior performance of ALR-SMAG compared to SPS in a wide range of machine-learning applications. In Theorem \ref{thm:semi:sc}, we assume $c > 1$ to ensure that the step size is not too aggressive. For example, in the experiments on logistic regression in Section \ref{sec:experiment:logistic}, we will use $c=5$.  For the deep learning tasks (nonconvex), we suggest that $c < 1$. This coincides with parameter $c$ from SPS\_${\max}$ (they set $c=0.2$)~\citep{SPS}.

An important property of SPS~\citep{SPS} is that the step size is lower and upper-bounded. This is not the case for our step size. Since $d_k$ is a convex combination of all previous stochastic gradients, the scale of $d_k$ is controlled by the previous stochastic gradients. In general, it is not clear how $\left\|d_k\right\|$ is related to $\left\|\nabla f_{S_k}(x_k)\right\|$, which makes it challenging to analyze the convergence of SMAG under (\ref{mag:lr:sc}). A key step in our analysis is to establish the inequality (\ref{dk:nablaf}) to handle the moving averaged gradient. The main novelty of ALR-SMAG is that it provides a principled way to adapt the step size for SGD with momentum and guarantees linear convergence, which earlier techniques were unable to do~\citep{L4,ALIG,berrada2021comment}.

The constant term in the inequality in Theorem~\ref{thm:semi:sc}  can not be made arbitrarily small by decreasing the upper bound $\eta_{\max}$. We also observe this limitation in the stochastic Polyak step size; see Theorem 3.1 and Corollary 3.3 in SPS~\citep{SPS}.  Theorem \ref{thm:semi:sc} suggests that $\beta=0$ achieves the best result in theory. This is also an issue for the stochastic momentum analysis~\citep{ unified_momentum,liu2020improved}. 

Our next corollary provides a stronger convergence result if the model is expressive enough to interpolate the data. In this setting, we use no maximal learning rate.
\begin{corollary}\label{thm:interpolation:ssc}
Assume interpolation ($\sigma = 0$) and
suppose that all assumptions of Theorem \ref{thm:semi:sc} hold. Consider the step size (\ref{mag:lr:sc}) and $\eta_{\max}=\infty$.
Then 
\begin{align*}
\E[\left\|x_{K+1} - x^{\ast} \right\|^2 ]\leq \left(1- \rho_1^{'}\right)^K\left\|x_1 - x^{\ast} \right\|^2
\end{align*}
where $\rho_1^{'} = \frac{(1-\beta)\left(c-1 \right)\hat{\mu}}{2c^2L} $.
\end{corollary}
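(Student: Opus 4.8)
The plan is to read off the corollary from the proof of Theorem~\ref{thm:semi:sc}, but to re-run the per-iteration estimate rather than naively substitute $\eta_{\max}=\infty$ into the stated bound (the additive term $2\eta_{\max}\sigma^2/(\rho_1(1-\beta))$ is a $0\cdot\infty$ form). Two structural simplifications come from interpolation. First, every $f(\cdot;\xi)$ is minimized at $x^{\ast}$, so $f_{S_k}^{\ast}=f_{S_k}(x^{\ast})$ and, by Assumption~\ref{assumpt:noise:fstar}, $\sigma=0$; the optimal-gap term that produced the noise floor in Theorem~\ref{thm:semi:sc} then vanishes identically. Second, with $\eta_{\max}=\infty$ the step size~(\ref{mag:lr:sc}) is always the untruncated Polyak form $\eta_k=(f_{S_k}(x_k)-f_{S_k}^{\ast})/(c\|d_k\|^2)$, so the second branch of the minimum defining $\rho_1$ (the one scaling with $\eta_{\max}$) never binds and $\rho_1$ collapses to $\rho_1'$.

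First I would expand the one-step identity $\|x_{k+1}-x^{\ast}\|^2=\|x_k-x^{\ast}\|^2-2\eta_k\langle d_k,x_k-x^{\ast}\rangle+\eta_k^2\|d_k\|^2$. Writing $\Delta_k:=f_{S_k}(x_k)-f_{S_k}^{\ast}$ and using the stochastic counterpart of Lemma~\ref{lem:mag:convex} together with convexity of $f_{S_k}$ gives the lower bound $\langle d_k,x_k-x^{\ast}\rangle\geq\langle\nabla f_{S_k}(x_k),x_k-x^{\ast}\rangle\geq f_{S_k}(x_k)-f_{S_k}(x^{\ast})=\Delta_k$, where the last equality is exactly interpolation. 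The hypothesis $\eta_i\leq\Delta_i/\|d_i\|^2$ needed for that lemma holds because $c>1$. Substituting the step size turns the quadratic term into $\eta_k^2\|d_k\|^2=\eta_k\Delta_k/c$, so the one-step bound collapses to $\|x_{k+1}-x^{\ast}\|^2\leq\|x_k-x^{\ast}\|^2-\tfrac{2c-1}{c}\,\eta_k\Delta_k$, with $\eta_k\Delta_k=\Delta_k^2/(c\|d_k\|^2)$.

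The crux, and the main obstacle, is to convert the ratio $\Delta_k^2/\|d_k\|^2$ into a genuine contraction of $\|x_k-x^{\ast}\|^2$: since $d_k$ is a discounted sum of past stochastic gradients, it is not directly comparable to $\nabla f_{S_k}(x_k)$, and a naive per-step bound can be spoiled by stale large gradients. Here I invoke the key inequality~(\ref{dk:nablaf}): combining $L$-smoothness of the individual losses (so $\|\nabla f_{S_j}(x_j)\|^2\leq 2L\Delta_j$) with a Jensen estimate on the geometric weights yields a bound of the type $\|d_k\|^2\leq\frac{2L}{1-\beta}\sum_{j}\beta^{k-j}\Delta_j$, which is precisely where the $(1-\beta)$ factor of $\rho_1'$ originates and which makes the otherwise intractable ratio amenable to taking expectations. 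Feeding this into the one-step bound, passing to conditional expectations given the history $\mathcal F_k$ (interpolation gives $\E[\Delta_k\mid\mathcal F_k]=f(x_k)-f^{\ast}$, letting us trade the awkward ratio for the true optimality gap), and finally applying semi-strong convexity $f(x_k)-f^{\ast}\geq\tfrac{\hat\mu}{2}\|x_k-x^{\ast}\|^2$ produces a per-step factor $1-\rho_1'$ with $\rho_1'=(1-\beta)(c-1)\hat\mu/(2c^2L)$; the $(c-1)$ reflects the margin secured by taking $c>1$. Taking total expectations and iterating from $k=1$ to $K$ yields the claimed geometric decay, with no residual constant since $\sigma=0$. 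The delicate bookkeeping is the summation induced by the weighted bound on $\|d_k\|^2$, which must be controlled so that the discounted past gaps do not obstruct the clean per-step contraction.
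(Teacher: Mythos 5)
There is a genuine gap at the crux of your argument, and it sits exactly where you flagged ``the main obstacle.'' Your preliminary reductions are fine and match the paper: under interpolation $f_{S_i}^{\ast}=f_{S_i}(x^{\ast})$ pathwise, so the noise term vanishes identically (no $0\cdot\infty$ issue), and with $\eta_{\max}=\infty$ only the Polyak branch of the step size is ever active, so only the first argument of the minimum defining $\rho_1$ survives. The problem is your lower bound $\langle d_k,x_k-x^{\ast}\rangle\geq\Delta_k$: you invoke the stochastic analogue of Lemma~\ref{lem:mag:convex} (i.e.\ Lemma~\ref{lem:smag:convex}) only to conclude $\langle d_{k-1},x_k-x^{\ast}\rangle\geq 0$ and then discard it. That lemma actually gives the \emph{quantitative} bound $\langle d_{k-1},x_k-x^{\ast}\rangle\geq\left(1-\tfrac{1}{c}\right)\sum_{i<k}\beta^{k-1-i}\Delta_i$ under interpolation, and keeping this history sum is essential. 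With your weakened bound, the per-step decrease is $\tfrac{2c-1}{c^2}\,\Delta_k^2/\|d_k\|^2$, and after applying (\ref{dk:nablaf}) with smoothness you face the ratio $\Delta_k^2\big/\sum_{j\leq k}\beta^{k-j}\Delta_j$. This ratio can be arbitrarily smaller than $\Delta_k$ (take $\Delta_{k-1}$ huge and $\Delta_k$ tiny), so it yields no contraction proportional to $\Delta_k$, and hence no factor $1-\rho_1'$. Your proposed rescue --- ``passing to conditional expectations \dots letting us trade the awkward ratio for the true optimality gap'' --- is not a valid step: the conditional expectation of this ratio is not the ratio of conditional expectations, and even the correct Jensen bound (the ratio is convex in $\Delta_k$) leaves the $\mathcal{F}_k$-measurable history sum $\sum_{j<k}\beta^{k-j}\Delta_j$ in the denominator, where it continues to block the contraction.

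The paper's proof of Theorem~\ref{thm:semi:sc} resolves this precisely by \emph{not} throwing the history away. Retaining the full bound from Lemma~\ref{lem:smag:convex}, the expansion of $\|x_{k+1}-x^{\ast}\|^2$ produces, after regrouping, the term
\begin{align*}
-\frac{2(c-1)}{c^2}\,\frac{\Delta_k}{\|d_k\|^2}\sum_{i=1}^{k}\beta^{k-i}\Delta_i ,
\end{align*}
in which the discounted past gaps appear in the \emph{numerator}. Then smoothness, $\sum_{i\leq k}\beta^{k-i}\Delta_i\geq\tfrac{1}{2L}\sum_{i\leq k}\beta^{k-i}\|\nabla f_{S_i}(x_i)\|^2$, together with (\ref{dk:nablaf}), $\|d_k\|^2\leq\tfrac{1}{1-\beta}\sum_{i\leq k}\beta^{k-i}\|\nabla f_{S_i}(x_i)\|^2$, makes the history sum cancel against $\|d_k\|^2$, leaving the clean pointwise decrease $-\tfrac{(1-\beta)(c-1)}{c^2L}\Delta_k$. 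Intuitively, stale large gradients inflate $\|d_k\|^2$, but they inflate $\langle d_k,x_k-x^{\ast}\rangle$ by the same amount, and the proof must exploit both sides of that trade; your version keeps only the cost and not the benefit. Once this term is in place, conditional expectation (now applied to the \emph{linear} quantity $\Delta_k$, where $\E[\Delta_k\mid\mathcal{F}_k]=f(x_k)-f^{\ast}$ is legitimate) and semi-strong convexity give the factor $1-\rho_1'$ with $\rho_1'=\tfrac{(1-\beta)(c-1)\hat{\mu}}{2c^2L}$, and telescoping finishes the corollary.
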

Under interpolation, ALR-SMAG  can converge to the optimal solution $x^{\ast}$ and achieves the fast linear convergence rate $\mathcal{O}\left((1-(1-\beta)\hat{\mu}/L)^k\right)$ under semi-strong convexity. We also provide the convergence results of ALR-SMAG for general convex functions in Theorem~\ref{thm:convex} (see Appendix~\ref{appendix:determinstic}).

In the end, we will compare the analysis above with other adaptive step sizes and stochastic momentum methods. The iterate complexity of AdSGD~\citep{AdSGD} is $\kappa$ higher compared to SGD
for adaptive estimation of the stepsize. Clearly, the complexity of ALR-SMAG under interpolation is better than that of AdSGD. SMAG under constant step size is equivalent to stochastic heavy-ball (SHB)~\citep{unified_momentum} and SGDM~\citep{liu2020improved}. 
In proposition 2 \citep{liu2020improved}, the constant step size is restricted to be smaller than a small number $(1-\beta)/(5L)$ when the common choice $\beta=0.9$ is applied.  While Corollary \ref{thm:interpolation:ssc} does not have any restriction for $\eta_{\max}$. 

\subsection{Stochastic Extension of ALR-HB.}

The same idea to ALR-SMAG in Section~\ref{sec:thm:sc}, 
we consider applying the mini-batch of the function
$f_{S_k} = \frac{1}{|S_k|}\sum_{i\in S_k}f(x;\xi_i)$
to the framework (\ref{main:inequ:lr}). A natural extension of ALR-HB to the stochastic setting is  
{\small
\begin{align}\label{mad:shb:lr}
\hspace{-0.05in} \eta_k = \min\left\lbrace \frac{f_{S_k}(x_k) - f_{S_k}^{\ast}}{c\left\|\nabla f_{S_k}(x_k) \right\|^2} + \frac{\beta \left\langle \nabla f_{S_k}(x_k), x_k -x_{k-1}\right\rangle }{\left\|\nabla f_{S_k}(x_k) \right\|^2}, \eta_{\max} \right\rbrace.
\end{align}
}We call the stochastic version of HB as SHB, and the algorithm SHB with step size (\ref{mad:shb:lr}) as ALR-SHB. 
Three changes are made compared to the direct generalization of (\ref{mad:hb:lr:1}). A similar discussion can be found in Section~\ref{sec:thm:sc}, which we omit here.  
In Appendix~\ref{sec:hb:ls}, we provide a theoretical guarantee for truncated ALR-HB on least-squares but leave other possible results of ALR-HB and ALR-SHB for the future.

\section{Numerical Evaluations}\label{sec:numerical:exp}
In this section, we evaluate the practical performance of the proposed adaptive step sizes.
We start with experiments on the least-squares problems for ALR-MAG and ALR-HB,  and continue by exploring the performance of the stochastic versions, ALR-SMAG and ALR-SHB, on large-scale convex optimization problems and deep neural networks training. For space concerns, the experiments in the convex interpolation setting are reported in appendix~\ref{sec:experiment:logistic}.
\subsection{Empirical Results on Ill-Conditioned Least-Squares}\label{sec:least:squares}
    We use the procedure described in~\citep{least_square_test} to generate test problems with
        $ f(x) = \frac{1}{2}\left\| A x - b \right\|^2$
    where $A \in \R^{d_1\times d}$ is positive definite, $b \in \R^{d_1}$ is a random vector, and the optimum $f^{\ast} = 0$. We report results for problems with $d_1=d=1000$ for which the condition number $\kappa$ of $A^{T}A$ is $10^{4}$. The strong convexity constant $\mu$  and smoothness constant $L$ are the smallest and largest eigenvalues of the matrix $A^{T}A$, respectively.

    We test \emph{ALR-HB} and \emph{ALR-MAG} against with several important methods including (1) gradient descent with Polyak step size (named GD-Polyak); (2) heavy-ball with the optimal parameters, i.e., $\beta^{\ast} = (\sqrt{\kappa}-1)^2/(\sqrt{\kappa}+1)^2$ and step size $\eta^{\ast} = (1+\sqrt{\beta^{\ast}})^2/L$~\citep{polyak1964some} (named HB-optimal); (3) L$^4$Mom~\citep{L4}; (4) AGM (variant II)~\citep{AGM}; (5) AHB~\citep{AHB}; (6)AdGD-accel~\citep{AdSGD}. 
    
    If $\mu$ and $L$ are known a priori, we set $\beta=\beta^{\ast}$ for ALR-HB and ALR-MAG, as HB-optimal. We perform a grid search for parameters that are not specified (see  Appendix~\ref{append:least:squares}). The results are shown in Figure \ref{fig:ls:with}. ALR-HB (v2) performs the best among these methods.  For the case that $\mu$ and $L$ are not known, momentum parameter $\beta$ is tuned from $\left\lbrace 0.5, 0.9, 0.95, 0.99 \right\rbrace$ for HB, L$^4$Mom, ALR-HB, and ALR-MAG. The results in Figure \ref{fig:ls:without} show that ALR-HB and ALR-MAG are much better than HB with best-tuned constant step size, L$^4$Mom,  and the other algorithms.

To illustrate the behavior of different step sizes, we plot ALR-HB and ALR-MAG with the theoretically optimal step size $\eta^{\ast}$ for HB in Figure~\ref{fig:ls:with}(right). We can see how  ALR-HB (v2) oscillates in a small range around $\eta^{\ast}$ and ALR-MAG  converges to a value that is slightly different than $\eta^{\ast}$. Without knowledge of $\mu$ and $L$, ALR-HB still varies around $\eta^{\ast}$ and captures the function's curvature (see Figure~\ref{fig:ls:without}(right)). We also plot the final loss of the algorithms in Figure \ref{fig:ls:beta} on different $\beta$ selected from the interval $[0.9, 1)$, which includes $\beta^{\ast}$. 
Clearly, ALR-HB is less sensitive to $\beta$ than the original heavy-ball method while L$^4$Mom is far worse.
\begin{figure}[ht]
\vskip -0.1in
\includegraphics[width=0.235\textwidth,height=1.5in]{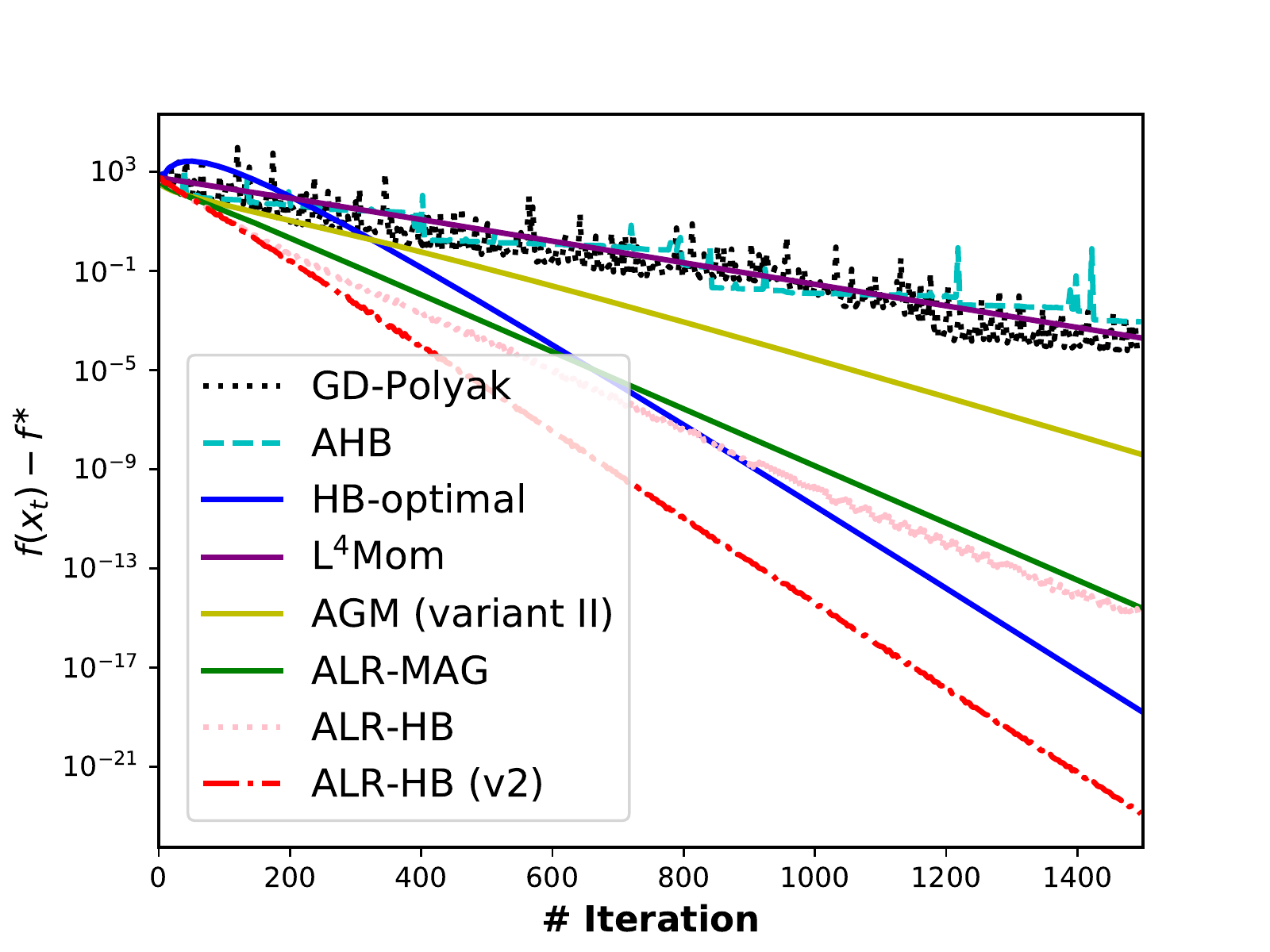}
\hfill 
\includegraphics[width=0.235\textwidth,height=1.5in]{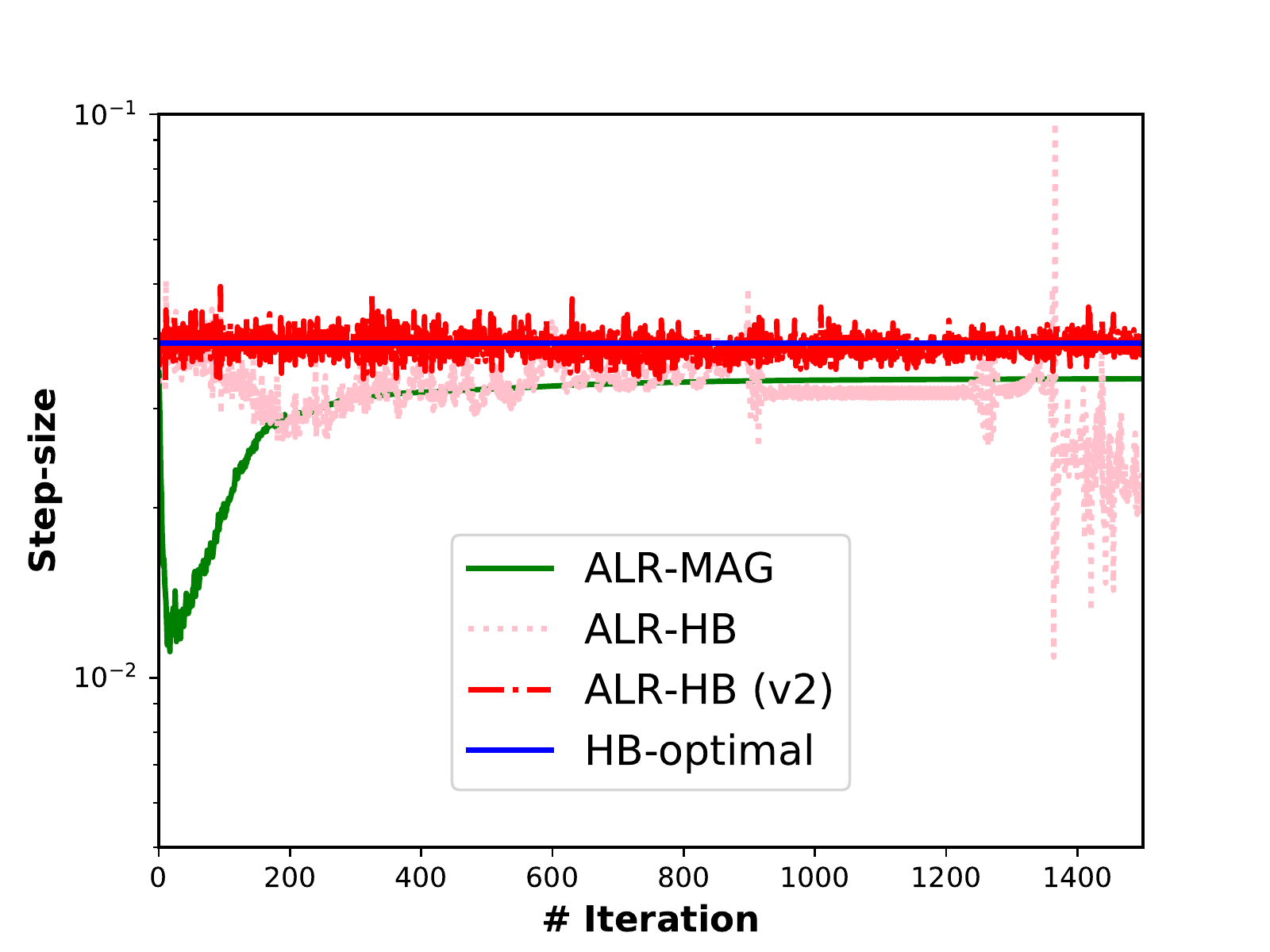}
 \vskip -0.1in
\captionsetup{justification=centering}
         \caption{Least-squares with knowledge of $\mu$ and $L$ (left: sub-optimality; right: step size)}
         \label{fig:ls:with}
         \vskip -0.1in
     \end{figure}
     \hfill
     \begin{figure}[ht]
     \vskip -0.1in       \includegraphics[width=0.235\textwidth,height=1.5in]{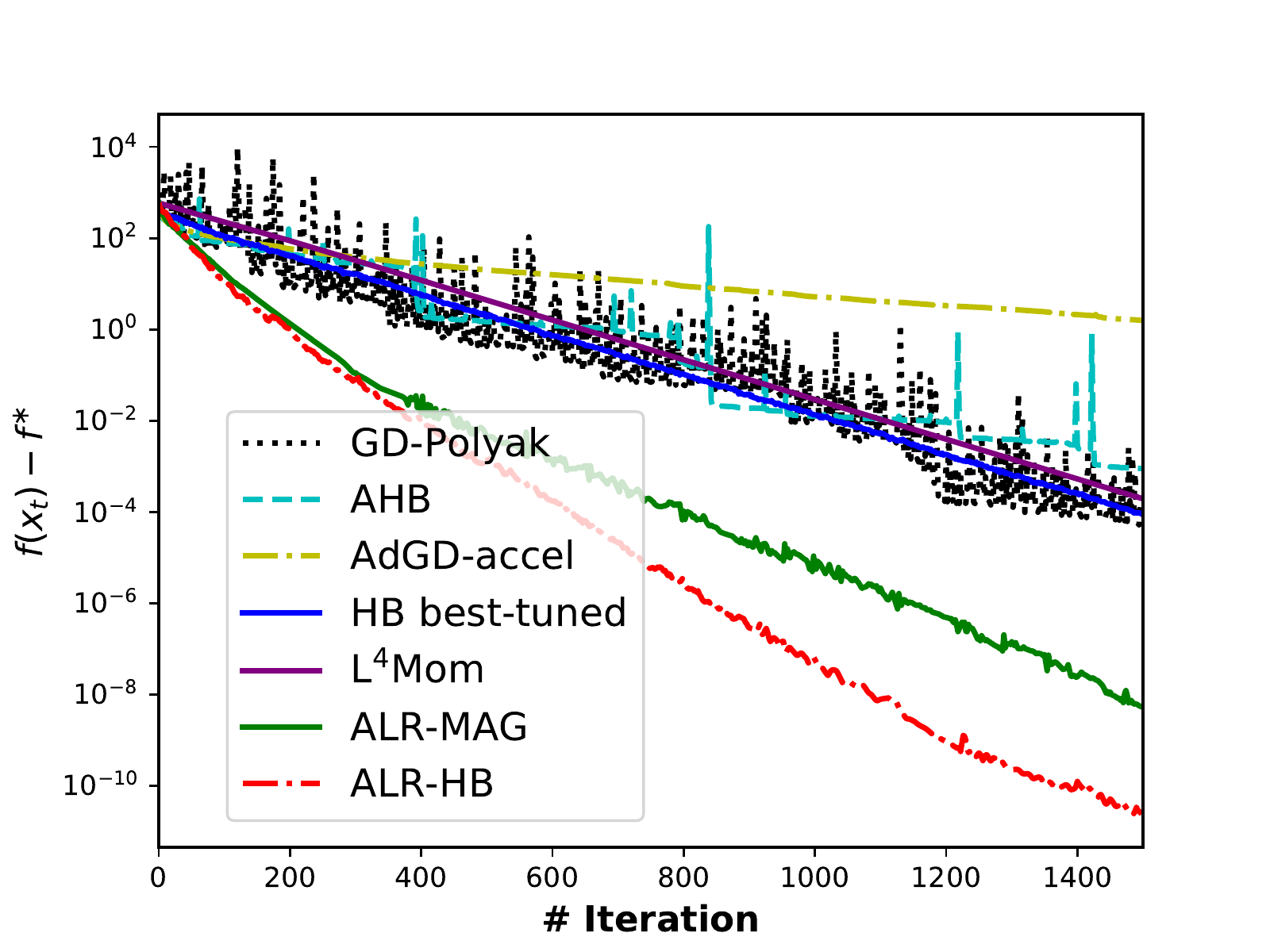}
         \hfill
\includegraphics[width=0.235\textwidth,height=1.5in]{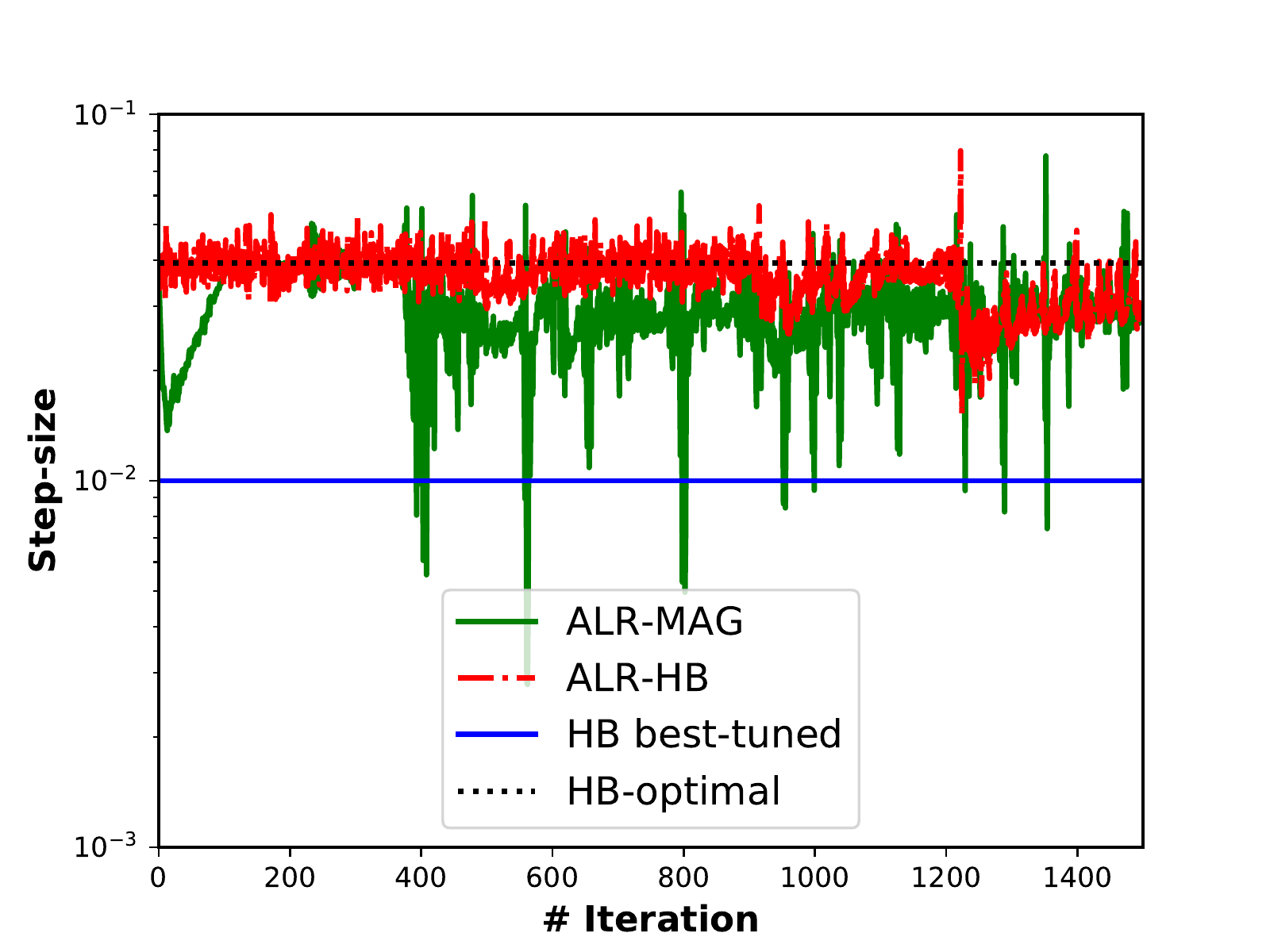}
\captionsetup{justification=centering} 
       \caption{Least-squares without knowledge of $\mu$ and $L$ (left: sub-optimality; right: step size)}
\label{fig:ls:without}
\vskip -0.1in
     \end{figure}

\begin{figure}[t]
\vskip -0.1in
\begin{center}
\includegraphics[width=0.4\textwidth,height=2in]{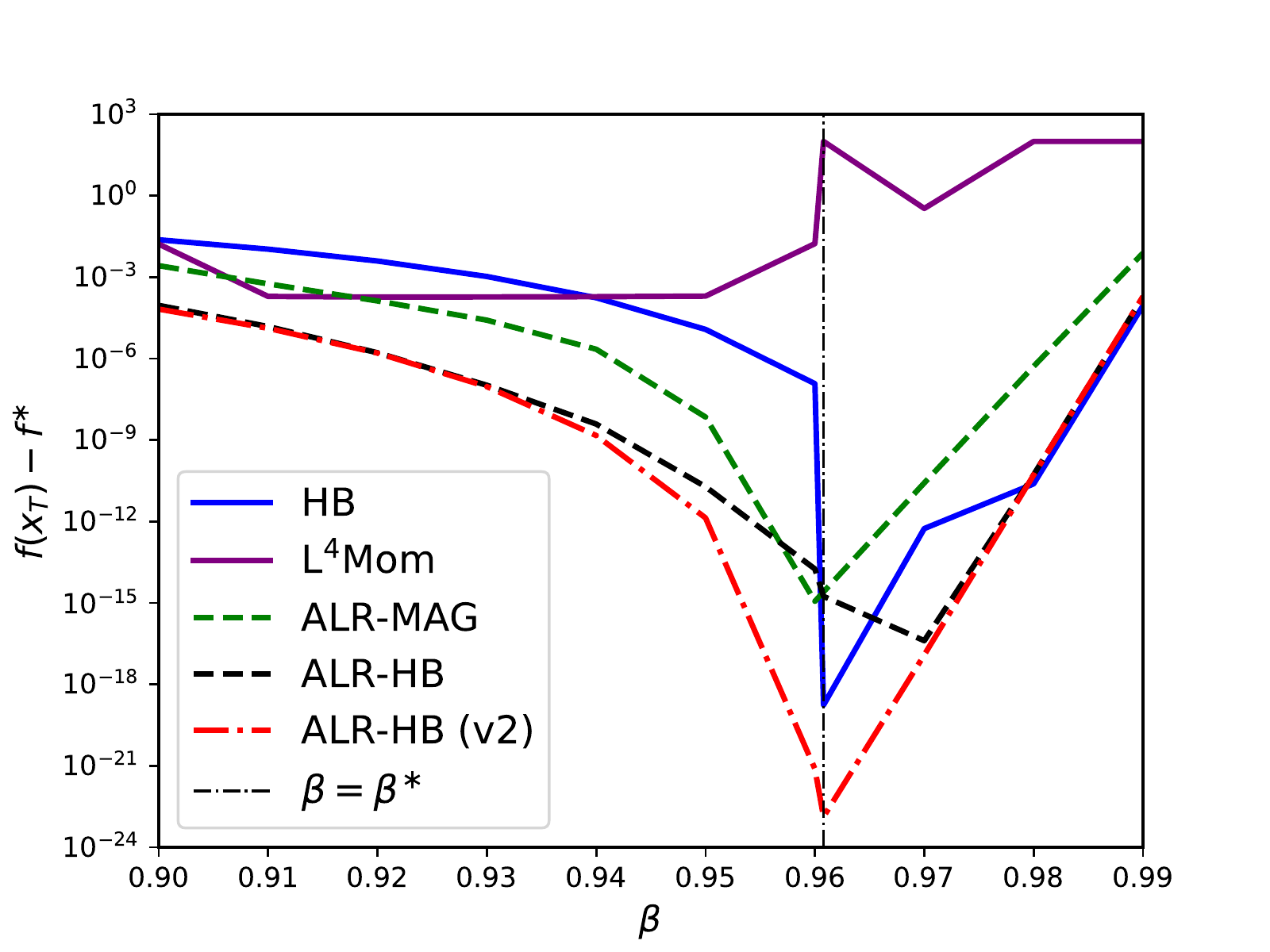}
\vskip -0.1in
\caption{Least-squares under different $\beta$ }
\label{fig:ls:beta}
\end{center}
\end{figure}

\subsection{Experimental Results on Deep Neural Networks}\label{sec:numerical:dnn}

To show the practical implications of ALR-SMAG and ALR-SHB, we conduct experiments with deep neural network training on the CIFAR~\citep{krizhevsky2009learning} and Tiny-ImageNet200~\citep{le2015tiny} datasets. We compare ALR-SMAG and ALR-SHB against SGD with momentum (SGDM) under: constant step sizes; step-decay~\citep{ge2019step,Wang-Step-Decay}, where the step size is divided by 10 after the same number of iterations, and cosine decay without restart~\citep{loshchilov2016sgdr};  the adaptive step size methods  SPS\_${\max}$~\citep{SPS} and SLS with acceleration (SLS-acc)~\citep{SLS};  L$^4$Mom~\citep{L4}; AdSGD~\citep{AdSGD}; and Adam~\citep{Adam}. To eliminate the influence of randomness, we repeat the experiments 5 times with different seeds and report the averaged results. The over-parameterized deep neural networks satisfy interpolation~\citep{zhang_interpolation}. In all Polyak-based algorithms, we use $f_{S_k}^{\ast}=0$ throughout. 

\subsubsection{Results on CIFAR10 and CIFAR100}
We consider the benchmark experiments for CIFAR10 and CIFAR100 with two standard image-classification architectures: 28$\times$10 wide residual network (WRN)~\citep{zagoruyko2016wide} and  DenseNet121~\citep{huang2017densely}, without implementation of weight-decay. The maximum epochs call is 200 and the batch size is 128. For the space concern, the details of the parameters are shown in Appendix~\ref{append:cifar}. The results on CIFAR10 and Tiny-ImageNet are presented in Appendix~\ref{append:cifar} and ~\ref{append:tinyimagenet}, respectively.

First, we report the results of CIFAR100 on WRN-28-10 and DenseNet121 in Figure \ref{fig:lr:cifar100} and Table \ref{tab:cifar100:polyak-grad}. From Figure \ref{fig:lr:cifar100}, we observe that ALR-SMAG and ALR-SHB result in the best training loss and achieves the highest accuracy. Table \ref{tab:cifar100:polyak-grad} shows that our algorithms ALR-SMAG and ALR-SHB perform better than the adaptive step size methods SPS\_${\max}$, L$^4$Mom, SLS-acc and AdSGD, and are comparable to SGDM with step-decay step size (denoted by SGDM-step). Note that L$^4$Mom failed in one run of the experiment but we still report the averaged results from the 4 successful runs. 

In this experiment, we borrow the idea of warmup from~\citep{vaswani2017attention} to update the upper bound  $\eta_{\max}$ of ALR-SHB and ALR-SMAG as $\eta_{\max} = \eta_0 \min(10^{-4} k, 1)$. The warmup heuristic has been used to mitigate the issue of converging to bad local minima for many optimization methods. The averaged result of test accuracy for each algorithm is reported in Table \ref{tab:cifar100:polyak-grad:wp}. 
When we incorporate the warmup (WP) technique for the maximal learning rate, our algorithms outperform SGDM with step-decay.
The performance of ALR-SMAG shown in Figure~\ref{fig:mag:c} is insensitive to the hyper-parameter $c$, see Appendix~\ref{append:cifar}.

 \begin{figure}[t]
 \vskip -0.1in
\begin{center}
\begin{subfigure}[b]{0.4\textwidth}
\includegraphics[width=\textwidth, height=2in]{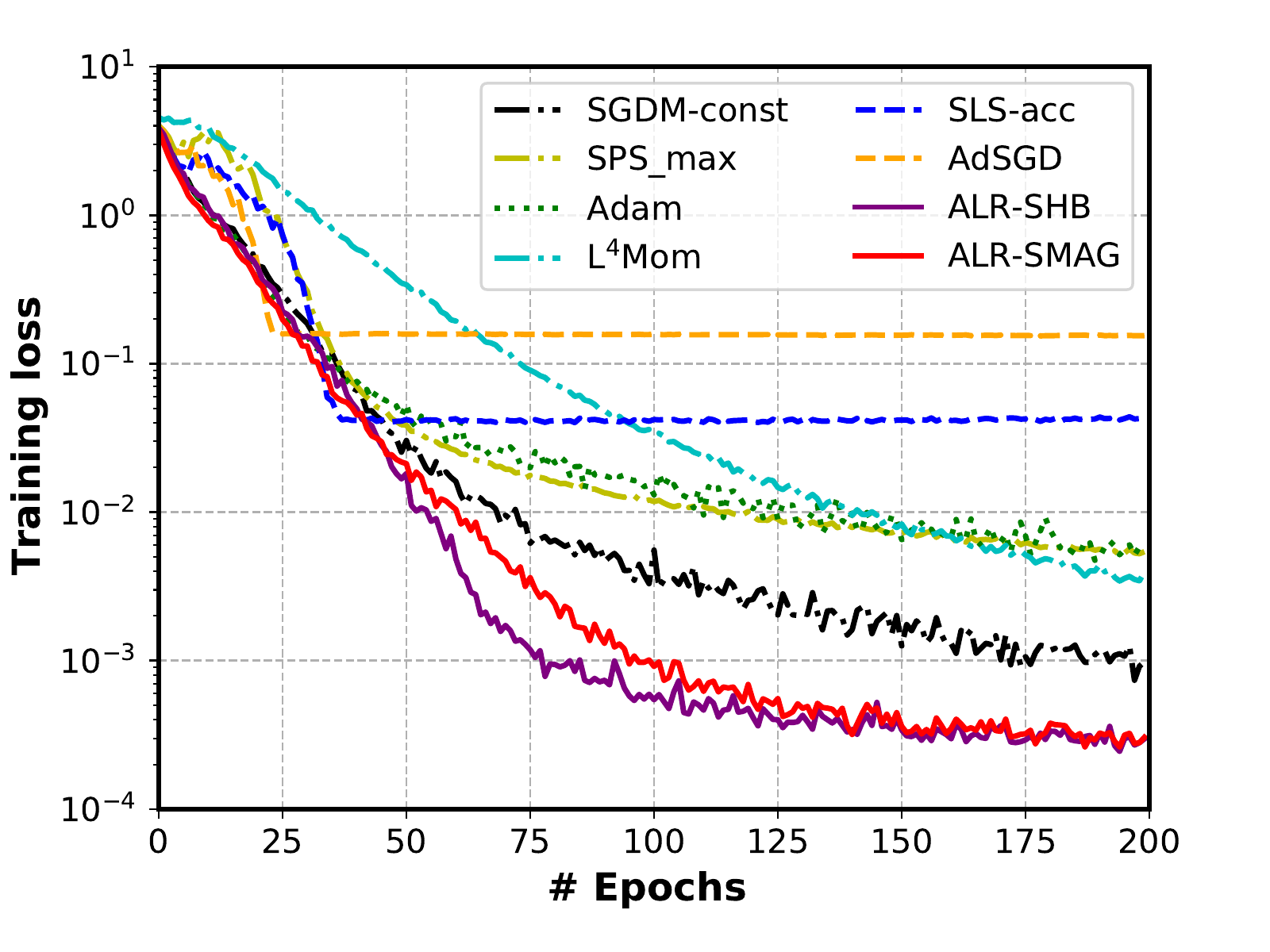}
\vskip -0.1in
\label{fig:cifar100:train:loss}
     \end{subfigure}
      \hfill
     \begin{subfigure}[b]{0.4\textwidth}
\includegraphics[width=\textwidth, height=2in]{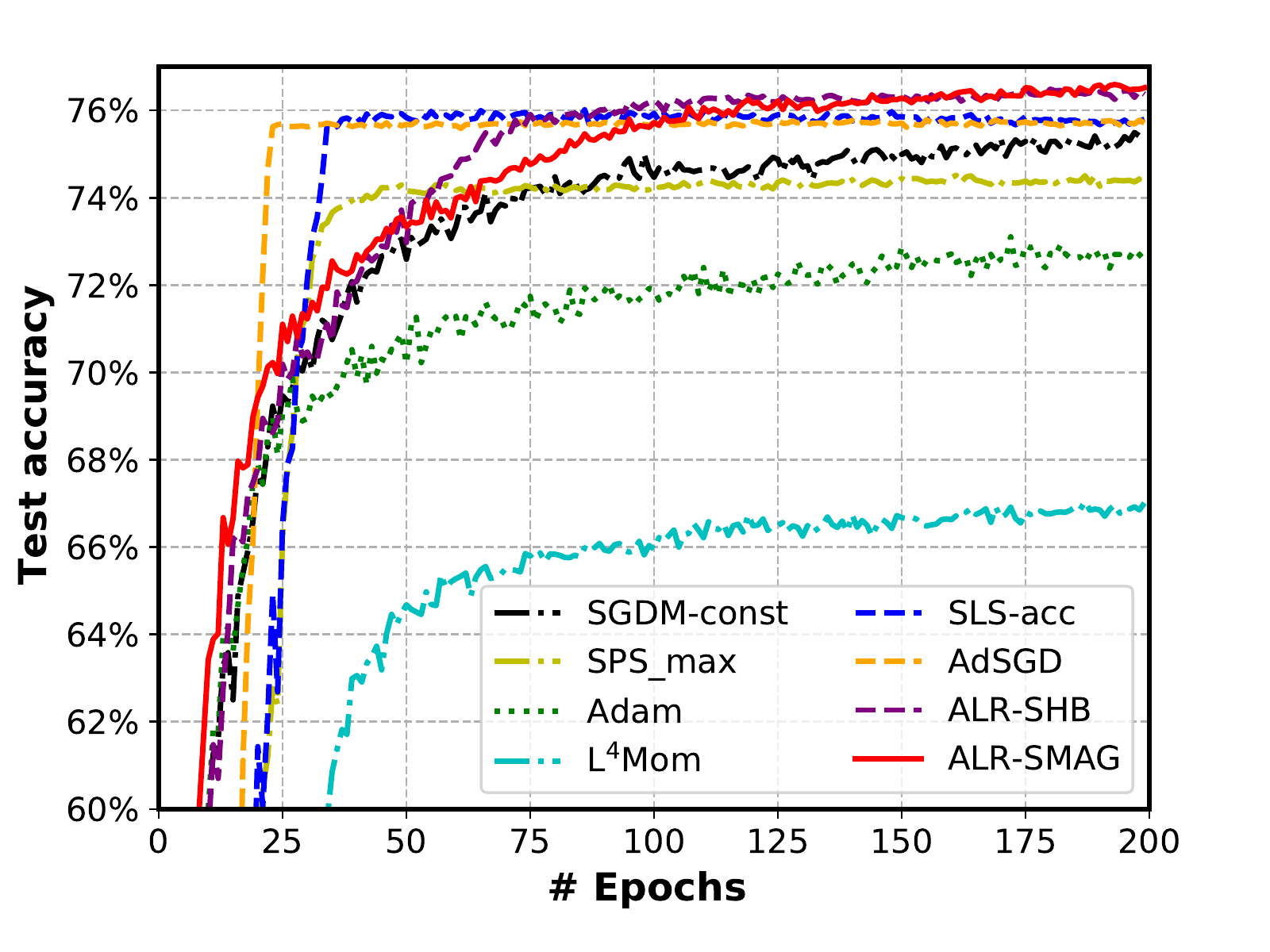} \vskip -0.1in
\label{fig:cifar100:accu}
     \end{subfigure}
        \caption{CIFAR100 - WRN-28-10: training loss (left) and test accuracy (right)}
\label{fig:lr:cifar100}
 \end{center}
 \vskip -0.1in
\end{figure}

\begin{table}[t] 
\noindent\caption{The results of CIFAR100 on WRN and DenseNet}
\label{tab:cifar100:polyak-grad} 
\begin{center}
\begin{small}
\begin{sc}
\begin{tabular}{lcc}
\toprule  \cline{1-3}
 \multirow{2}{*}{Method}  & WRN-28-10 & DenseNet121\\ 
 & \multicolumn{2}{c}{Test accuracy (\%)}  \\\midrule
 SGDM-const &  75.36 $\pm$ 0.28 & 74.20 $\pm$ 1.56 \\ 
ADAM & 72.70 $\pm$ 0.19 & 72.08 $\pm$ 0.15\\ 
 L4Mom & 66.84 $\pm$  0.60 & 67.55 $\pm$ 0.32 \\ 
   SPS\_${\max}$ &  74.39 $\pm$
   0.50 &  73.97 $\pm$ 0.87\\ 
  SLS-acc &  75.74 $\pm$  0.19  & 74.81 $\pm$ 0.26 \\ 
AdSGD &  75.71 $\pm$ 0.29  & 74.72 $\pm$ 0.40\\ 
  ALR-SHB  & 76.36 $\pm$ 0.15 & 74.50 $\pm$ 0.95 \\ 
    ALR-SMAG &  \textbf{76.51 $\pm$ 0.32}  & \textbf{75.25 $\pm$ 0.49} \\
    SGDM-step  & 76.49 $\pm$ 0.37 & 75.12 $\pm$ 0.32 \\ 
 \bottomrule
\end{tabular}
\end{sc}
\end{small}
\end{center} 
\vskip -0.1in
\end{table}

\begin{table}[h] 
\noindent\caption{Results of different step size policies  under warmup}
\label{tab:cifar100:polyak-grad:wp} 
\begin{center}
\begin{small}
\begin{sc}
\begin{tabular}{lcc}
\toprule  \cline{1-3}
 \multirow{2}{*}{Method}  & WRN-28-10 & DenseNet121\\ 
 & \multicolumn{2}{c}{Test accuracy (\%)}  \\\midrule
 SGDM-const + WP &  75.77 $\pm$ 0.48 & 74.4 $\pm$ 0.47 \\ 
    ALR-SHB + WP & { 77.57 $\pm$ 0.42}  & { 77.03 $\pm$ 0.35} \\ 
 ALR-SMAG + WP  & {\bf 77.63 $\pm$ 0.21 } & \bf{77.24 $\pm$ 0.16} \\ 
    SGDM-step + WP  & 77.27 $\pm$ 0.26 & 76.89 $\pm$ 0.28 \\ 
 \bottomrule
\end{tabular}
\end{sc}
\end{small}
\end{center} 
\vskip -0.1in
\end{table}

\subsection{Enabling Weight-Decay to Improve Generalization}
\label{sec:wd}

In neural network training, it is often desirable to incorporate weight-decay ($\ell_2$-regularization) to improve generalization. {It is, therefore, important to make our step sizes efficient also in this setting.} 
However, the typical way of adding $\ell_2$ regularization ($f + \frac{\lambda}{2}\left\|\cdot \right\|^2$) to the objective function is not applicable for Polyak-based algorithms because the corresponding $f_{S_k}^{\ast}$ is often inaccessible or expensive to compute. {ALI-G~\citep{ALIG} incorporates regularization as a constraint on the feasible domain. However, promoting regularization as a constraint does not work well for our step sizes.}
Instead, we use a similar idea as \citet{loshchilov2016sgdr} and decouple the loss and regularization terms. In ALR-SMAG, this is done by adding $\lambda x_k$ to the updated direction $d_{k}$ and use the search direction $d_k+\lambda x_k$; see Algorithm \ref{alg:mag:wd} of Appendix~\ref{append:wd} where $\lambda>0$ is the parameter of weight-decay.  In this way, we still set $f_{S_k}^{\ast}$ to be zero because nothing changes in the networks.

We test the performance of ALR-SMAG with \emph{weight-decay} on CIFAR100 with WRN-28-10 and compare with other state-of-the-art algorithms: AdamW under step-decay step size (denoted by AdamW-step) with $\lambda=0.0001$; SGDM under warmup (SGDM + WP), step-decay (SGDM-step), and cosine (SGDM-cosine) step sizes with $\lambda=0.0005$; and ALI-G~\citep{ALIG} with and without Nesterov momentum. For ALR-SMAG with weight-decay, we set $\lambda=0.0005$ and $c=0.3$. We train for 200 epochs and use batch size  128. More details are given in Appendix~\ref{append:wd}.

The results are shown in Table~\ref{tab:cifar100:polyak:wd}. In addition to the best test accuracy, we also record the results at 60, 120, and 180 epochs. We observe that ALR-SMAG is able to reach a relatively high accuracy at 120 epochs. But after 120 epochs, the training process is basically saturated and the accuracy does not improve much (it even drops a little). Since $c$ is less than 1, the step size of ALR-SMAG is still aggressive.
As a result, the iteration oscillates locally, and it is difficult to converge to a certain point. In order to converge, in the final training phase (last 20\% of iterations) of ALR-SMAG, we introduce a fine-tuning phase that increases $c$ exponentially. 
 From the last row in Table~\ref{tab:cifar100:polyak:wd} we see that fine-tuning (FT) does improve the solution accuracy. 

\begin{table}[h] 
\vskip -0.1in
\noindent\caption{CIFAR100 - WRN-28-10 with weight-decay (WD)}
\label{tab:cifar100:polyak:wd}
\vspace{-0.1in}
\begin{center}
\begin{small}
\begin{sc}
\hspace{-0.1in}
\begin{tabular}{lcccl}
\toprule  \cline{1-5}
\hspace{-0.1in} \multirow{2}{*}{Method}  & \multicolumn{4}{c}{Test accuracy (\%)} \\
&  \#60 & \#120 & \#180  & Best \\ \midrule
\hspace{-0.1in} AdamW-step  &   70.60  & 75.28  &  75.93 & 76.19 $\pm$ 0.13\\ 
\hspace{-0.1in} ALI-G   &  {\bf 72.27}  & 72.83 & 73.12 & 73.40 $\pm$ 0.22\\
\hspace{-0.1in} ALI-G + Mom   & { 67.06}   & 78.99 & 79.88 & 80.21 $\pm$ 0.14\\ 
\hspace{-0.1in} SGDM + WP   & 69.88 & 72.02 & 72.32 & 73.47 $\pm$ 0.40\\ 
\hspace{-0.1in} SGDM-step   & 60.15 & 75.38 & {80.91} & 81.22 $\pm$ 0.16\\
\hspace{-0.1in} SGDM-cosine   & { 64.52} & 71.17 & {81.42} & 81.85 $\pm$ 0.19\\
 \hspace{-0.1in}  ALR-SMAG  &    63.64  &  {\bf 80.20} & 80.69 & 81.18 $\pm$ 0.28\\ 
\hspace{-0.1in}    ALR-SMAG +  FT & 63.64 & {\bf 80.20}  & {\bf 81.64} & {\bf 81.89} $\pm$ 0.20\\
\bottomrule
\end{tabular}
\end{sc}
\end{small}
\end{center} 
\vskip -0.2in
\end{table}


\section{Conclusion}
We proposed a novel approach for generalizing the popular Polyak step size to first-order methods with momentum. The resulting algorithms are significantly better than the original heavy-ball method and gradient descent with Polyak step size if the condition number is inaccessible. We demonstrated our methods are less sensitive to the choice of $\beta$ than the original heavy-ball method and may avoid the instability of heavy-ball on the ill-conditioned problems. Furthermore, we extended our step sizes to the stochastic settings and demonstrated superior performance in logistic regression and deep neural network training compared to the state-of-the-art adaptive methods. In Appendix~\ref{sec:model:nag}, we extend our framework to Nesterov accelerated gradient (NAG) and provide preliminary experiments on least-squares problems.  It will be interesting to study how this step size performs on a wider range of applications. Another interesting extension would be to develop techniques for adjusting the learning rate in second-order adaptive gradient methods, such as  AdaGrad~\citep{AdaGrad} and Adam~\citep{Adam} which are widely used in deep learning.




\section*{Acknowledgements}

This work was supported partially by  
GRF 16310222, GRF 16201320, and VR 2019-05319.
Xiaoyu Wang is supported by Innovation and Technology Commission of Hong Kong China under the project PRP/074/19FX.

\bibliography{spolyak}
\bibliographystyle{icml2023}

\newpage
\appendix
\onecolumn


\section{Application to Nesterov Accelerated Gradient (NAG)}\label{sec:model:nag}

In Sections \ref{sec:model:hb} and \ref{sec:model:mag}, we have shown how our framework applies to the heavy-ball and moving averaged gradient methods. However, the magic does not stop there. For instance, we further incorporate Nesterov accelerated gradient (NAG)~\citep{nesterov1983} 
\begin{align*}
v_{k+1} & = \beta v_k - \eta_k \nabla f(x_k + \beta v_k) \notag \\
x_{k+1} & = x_k + v_{k+1} 
\end{align*}
into the proposed framework in Section~\ref{sec:model}. In (\ref{unified:mom}), let $d_k = \nabla f(x_k + \beta (x_k-x_{k-1}))$ and $\gamma = \beta$, then it reduces to the NAG algorithm. By using the convexity of $f$ at $x_k + \beta v_k$, that is $ \left\langle \nabla f(x_k + \beta v_k),  x_k + \beta v_k -  x^{\ast} \right\rangle \geq f(x_k + \beta v_k) - f^{\ast} $, 
we optimize the upper bound of $\left\|x_{k+1} - x^{\ast} \right\|^2$ with respect to the step size variable $\eta_k$, it results in the adaptive step size for NAG:
\begin{align}\label{inequ:mad:nag}
\eta_k = \frac{f(x_k + \beta v_k) - f^{\ast} }{\left\|\nabla f(x_k + \beta v_k)\right\|^2}.
\end{align}
We refer to the NAG algorithm with the step size (\ref{inequ:mad:nag}) as ALR-NAG. One intuitive interpretation behind the ALR-NAG algorithm is that first, you move a momentum step $\beta(x_k-x_{k-1})$ at the current point $x_k$, and then you stand at this new point $\tilde{x}_k=x_k + \beta(x_k -x_{k-1})$ and perform the Polyak step size along $-\nabla f(\tilde{x}_k)$. 

Next, we conduct preliminary experiments to test ALR-NAG on a least-squares problem where the condition number $\kappa=10^4$. The first interesting result on the least-squares problem shows that ALR-NAG can obtain a more accurate solution than the original NAG~\citep{nesterov1983} under optimal parameters and the accelerated gradient method (AGM)~\citep{AGM}. \citet{AGM} evaluate the strong convexity
constant $\hat{\mu}$ by the inverse of the Polyak step size and set the momentum parameter $\beta=(\sqrt{L}-\sqrt{\hat{\mu}})/(\sqrt{L}+\sqrt{\hat{\mu}})$ for Nesterov momentum with the knowledge of the smoothness parameter $L$. Besides, for each $\beta \in (0,1)$, ALR-NAG automatically adjusts the step size which makes it less sensitive to $\beta$ than NAG with $1/L$ step size where $L$ is the largest eigenvalue of the least-squares problem. Another observation is that the optimal parameter $\beta$ for ALR-NAG is not consistent with the original NAG of which the theoretical optimal momentum parameter is $\beta^{\ast}=(\sqrt{\kappa}-1)/(\sqrt{\kappa}+1)$~\citep{nesterov1983} where $\kappa$ is the condition number of the problem.
\begin{figure}[ht]
\begin{center}
\includegraphics[width=0.4\textwidth]{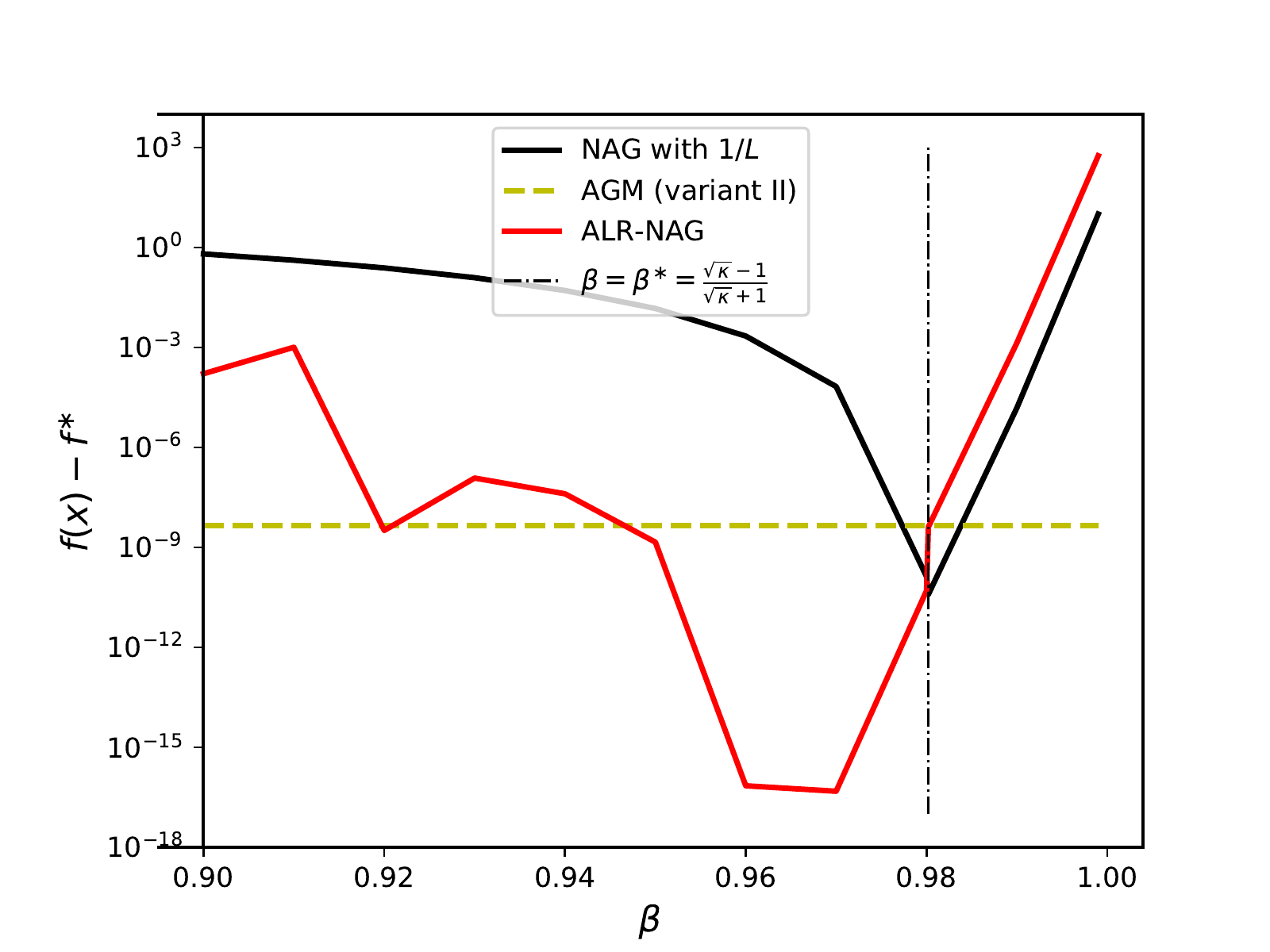}
\caption{Results of NAG and ALR\_NAG with different $\beta$ on least-squares}
\label{fig:ls:nag:beta}  
\end{center}
\end{figure}

\section{Proofs of Section \ref{sec:theory}}\label{appendix:determinstic}
In this section, before describing the details of the proofs, we first provide the basic definitions that omit in the main content.
\begin{definition}(Strongly convex)
We say that a function $f$ is $\mu$-strongly convex on $\R^d$ if $
f(y) \geq f(x) + \left\langle g, y-x\right\rangle + \frac{\mu}{2}\left\|y-x\right\|^2, \, \forall  x, y\in \R^d, g \in \partial f(x),$
with $\mu > 0$.
\end{definition}
\begin{definition}(Convex)
The function $f$ is convex on $\R^d$ if $f(y) \geq f(x) + \left\langle g, y-x \right\rangle $  for any $g \in \partial f(x)$ and $x,y\in \R^d$.
\end{definition}

\begin{definition}(Quasar convexity~\citep{gower2021sgd})
  Let $\zeta \in (0,1]$ and $x^{\ast} \in \mathcal{X}^{\ast}$. A function $f$ is $\zeta$-quasar convex with respect to $x^{\ast}$ if $ f^{\ast} \geq f(x) + \frac{1}{\zeta}\left\langle g, x^{\ast} - x \right\rangle $ for any $x \in \R^d$ and $g \in \partial f(x)$.
\end{definition}
In general, a $\zeta$-quasar convex function $f$ does not need to be convex. The parameter $\zeta$ controls the non-convexity of the function. If $\zeta = 1$, the quasar convex is reduced to the well-known star convexity \citep{nesterov2006cubic}, which is a generalization of convexity. For example, $f(x) = (x^2+\frac{1}{4})^{\frac{1}{4}}$ is quasar-convex with $\zeta=1/2$. Learning linear dynamical systems is the practical example of a quasar-convex function and is nonconvex~\citep{hardt2018gradient}.

In Section~\ref{sec:theory}, we have provided the definition of the semi-strongly convex functions. Here we give one simple example to clarify that the semi-strongly convex function is not necessarily strongly convex. For example, $f(x)=x^2 + 2\sin(x)^2$ is semi-strongly convex with $\hat{\mu}=2$, while the second-order derivative $\nabla^2 f(x)$ can be negative.

\begin{definition}($L$-smooth)
When the function $f$ is differentiable on ${\R^d}$, we say that $f$ is $L$-smooth on 
$\R^d$ if there exists a constant $L > 0$ such that $\left\|\nabla f(x) - \nabla f(y) \right\| \leq L\left\|x-y\right\|$. 
This also implies that $f(y) \leq f(x) + \left\langle \nabla f(x), y-x \right\rangle + \frac{L}{2}\left\|x-y\right\|^2$ for any $x, y \in \R^d$.
\end{definition}

In this paper, we also consider a special family of non-smooth and non-strongly convex problems, whose epigraph is a polyhedron~\citep{SG_nonsmooth_yang}. 
\begin{definition}(Polyhedral convex)
 For a convex minimization problem (\ref{P1}), suppose that the epigraph of $f$ over $\mathcal{X}$ is a polyhedron.
\end{definition}
The convex polyhedral problem implies the \emph{polyhedral error condition}~\citep{SG_nonsmooth_yang}: there exists a constant $\kappa_1 >0 $ such that
\begin{align*}
    \left\|x -x^{\ast} \right\| \leq \frac{1}{\kappa_1} (f(x) - f^{\ast})
\end{align*}
for all $x \in \mathcal{X}$.
Some interesting applications for example $\ell_{1}$ and $\ell_{\infty}$-constrained or regularized piece-wise linear minimization, and a submodular function minimization are polyhedral convex~\citep{SG_nonsmooth_yang}. 
\begin{assumption}\label{assumpt:polyhed}
For the problem (\ref{P1}), we assume that 
(i) $\forall x_0 \in \R^d$, we know there exists $\delta > 0$ such that $f(x_0) - \min_{x \in \R^d}f(x) \leq \delta$; (ii) there exists a constant $G>0$ such that $\max_{g \in \partial f(x)} \left\|g\right\|^2 \leq G^2$ for any $x \in \R^d$.
\end{assumption}
The first assumption of Assumption \ref{assumpt:polyhed} implies that there is a lower bound for $f^{\ast}$, which is also made in \citep{freund2018new}. This is satisfied in most machine learning applications for which we have $f^{\ast} \geq 0$. Assumption \ref{assumpt:polyhed}(ii) is a standard assumption to be made in the non-smooth optimization~\citep{boyd2003subgradient,Shamir-Zhang2013}.

\subsection{Proofs of Theorems and Lemmas in Section \ref{sec:thm:dc}}
In this part, we provide detailed proofs for the results of ALR-MAG in deterministic optimization. 
\begin{proof}({\bf of Lemma \ref{lem:mag:convex}})

For $k = 1$, we have $\left\langle d_{k-1}, x_k -x^{\ast}\right\rangle = \left\langle d_0, x_1 -x^{\ast}\right\rangle = 0$. 
    For $k > 1$, suppose that $\left\langle d_{k-2}, x_{k-1} -x^{\ast}\right\rangle \geq 0$ holds, we have
       \begin{align*}
        \left\langle d_{k-1}, x_k -x^{\ast}\right\rangle &   = \left\langle d_{k-1}, x_k - x_{k-1} + x_{k-1} 
        -x^{\ast}\right\rangle = \left\langle d_{k-1}, -\eta_{k-1}d_{k-1} + x_{k-1} 
        -x^{\ast}\right\rangle  \notag \\
        & =  - \eta_{k-1}\left\|d_{k-1}\right\|^2 + \left\langle \nabla f(x_{k-1}) + \beta d_{k-2},  x_{k-1} 
        -x^{\ast}\right\rangle \notag \\
        & = - \eta_{k-1}\left\|d_{k-1}\right\|^2 + \left\langle \nabla f(x_{k-1}),  x_{k-1} 
        -x^{\ast}\right\rangle + \beta \left\langle d_{k-2}, x_{k-1} 
        -x^{\ast}\right\rangle \notag \\
        & \mathop{\geq}^{(a)} - \eta_{k-1}\left\|d_{k-1}\right\|^2 +  \left(f(x_{k-1}) - f(
        x^{\ast}) \right)+ \beta \left\langle d_{k-2}, x_{k-1} 
        -x^{\ast}\right\rangle \geq 0.
     \end{align*}
     where $(a)$ follows from the convexity of $f$ that $\left\langle \nabla f(x_{k-1}), x_{k-1} -x^{\ast} \right\rangle \geq f(x_{k-1}) - f(x^{\ast}) $ and $\eta_{k-1} \leq \frac{f(x_{k-1}) - f(x^{\ast})}{\left\|d_{k-1} \right\|^2}$. By induction, we claim that $\left\langle d_{k-1}, x_k -x^{\ast}\right\rangle \geq 0$ for all $k\geq 1$.
     \end{proof}

\begin{proof}({\bf of Lemma \ref{lem:mag:xxstar}})

First, we consider the general convex functions without the smoothness assumption, then
    \begin{align}\label{inequ:lem:1}
    \left\langle  d_k, x_k -  x^{\ast} \right\rangle = \left\langle \nabla f(x_k) + \beta d_{k-1}, x_k -  x^{\ast} \right\rangle \geq \left\langle  \nabla f(x_k), x_k -  x^{\ast} \right\rangle \geq \left(f(x_k) - f^{\ast} \right)
    \end{align}
   where $\left\langle d_{k-1}, x_k -x^{\ast}\right\rangle \geq 0$ holds by Lemma \ref{lem:mag:convex}.
 By applying Inequality (\ref{inequ:lem:1}), the distance $\left\|x_{k+1} - x^{\ast} \right\|^2$ can be estimated as:
\begin{align}\label{inequ:mag:x:xk}
    \left\|x_{k+1} - x^{\ast} \right\|^2 & = \left\|x_{k} - \eta_k d_k -  x^{\ast} \right\|^2 = \left\|x_{k} - x^{\ast} \right\|^2 -  2\eta_k \left\langle  d_k, x_k -  x^{\ast} \right\rangle + \eta_k^2 \left\| d_k \right\|^2  \notag \\
    & \leq \left\|x_{k} - x^{\ast} \right\|^2 -  2\frac{(f(x_k) - f(x^{\ast}))^2}{ \left\| d_k \right\|^2}  + \frac{(f(x_k) - f(x^{\ast}))^2}{ \left\| d_k \right\|^2} \notag \\
    & = \left\|x_{k} - x^{\ast} \right\|^2 -  \frac{(f(x_k) - f(x^{\ast}))^2}{ \left\| d_k  \right\|^2}   = \left\|x_{k} - x^{\ast} \right\|^2 -  \eta_k\left(f(x_k) - f(x^{\ast}) \right).
\end{align}
 As we can see, this choice of step size leads to a decrease of $\left\|x_{k+1} - x^{\ast} \right\|^2$.    

Next, if the function is also $L$-smooth, by \citep[Theorem 2.1.5]{nesterov2003}, we have
\begin{align*}
\left\langle \nabla f(x_k), x_k - x^{\ast} \right\rangle \geq f(x_k) - f^{\ast} + \frac{1}{2L}\left\|\nabla f(x_k) \right\|^2.
\end{align*}
In this case, we claim that for all $k \geq 2$
\begin{align}\label{inequ:claim:smooth}
\left\langle d_{k-1}, x_k - x^{\ast} \right\rangle  \geq \frac{1}{2L}\sum_{i=1}^{k-1}\beta^{k-1-i}\left\|\nabla f(x_{i}) \right\|^2. 
\end{align}
When $k=2$, we can see that 
\begin{align*}
    \left\langle d_{1}, x_2 - x^{\ast} \right\rangle & =  \left\langle \nabla f(x_1), -\eta_1 \nabla f(x_1) + x_1 - x^{\ast} \right\rangle = - \eta_1 \left\|\nabla f(x_1) \right\|^2 +  \left\langle \nabla f(x_1), x_1 - x^{\ast} \right\rangle \notag \\
    & \geq  - \left( f(x_1) - f^{\ast} \right) + \left\langle \nabla f(x_1), x_1 - x^{\ast} \right\rangle \geq  \frac{1}{2L}\left\|\nabla f(x_{1}) \right\|^2.
\end{align*} 
Then the claim (\ref{inequ:claim:smooth}) holds at $k=2$. 
For $k > 2$, if $\left\langle d_{k-2}, x_{k-1} - x^{\ast} \right\rangle \geq \frac{1}{2L}\sum_{i=1}^{k-2}\beta^{k-2-i}\left\|\nabla f(x_{i}) \right\|^2$, we have
\begin{align*}
\left\langle d_{k-1}, x_k - x^{\ast} \right\rangle  & = \left\langle d_{k-1}, x_k - x_{k-1} + x_{k-1} - x^{\ast} \right\rangle = - \eta_{k-1} \left\| d_{k-1} \right\|^2 + \left\langle d_{k-1}, x_{k-1} - x^{\ast} \right\rangle \notag \\
& = - \eta_{k-1} \left\| d_{k-1} \right\|^2 + \left\langle \nabla f(x_{k-1}), x_{k-1} - x^{\ast} \right\rangle + \left\langle \beta d_{k-2} , x_{k-1} - x^{\ast} \right\rangle \notag \\
& = - \frac{f(x_{k-1}) - f^{\ast}}{\left\|d_{k-1}\right\|^2} \left\| d_{k-1} \right\|^2 + \left(f(x_{k-1}) - f^{\ast} + \frac{1}{2L}\left\|\nabla f(x_{k-1}) \right\|^2 \right) + \left\langle \beta d_{k-2} , x_{k-1} - x^{\ast} \right\rangle \notag \\
& \geq \frac{1}{2L}\left\|\nabla f(x_{k-1}) \right\|^2 + \beta \frac{1}{2L}\sum_{i=1}^{k-2}\beta^{k-2-i}\left\|\nabla f(x_{i}) \right\|^2 = \frac{1}{2L}\sum_{i=1}^{k-1}\beta^{k-1-i}\left\|\nabla f(x_{i}) \right\|^2.
\end{align*}
By induction, the claim (\ref{inequ:claim:smooth}) is correct for all $k \geq 2$. Then applying this claim (\ref{inequ:claim:smooth}), we can get that 
\begin{align*}
\left\langle  d_k, x_k -  x^{\ast} \right\rangle & = \left\langle \nabla f(x_k) + \beta d_{k-1}, x_k -  x^{\ast} \right\rangle \notag \\
& \geq \left(f(x_k) - f^{\ast} + \frac{1}{2L}\left\| \nabla f(x_k) \right\|^2\right) + \beta \frac{1}{2L} \sum_{i=1}^{k-1}\beta^{k-1-i} \left\|\nabla f(x_i) \right\|^2 \notag \\
& = \left(f(x_k) - f^{\ast}\right)+ \frac{1}{2L} \sum_{i=1}^{k}\beta^{k-i} \left\| \nabla f(x_i) \right\|^2.
\end{align*}
The distance $\left\|x_{k+1} - x^{\ast} \right\|^2$ can be evaluated as 
\begin{align*}
  \left\|x_{k+1} - x^{\ast} \right\|^2 & = \left\|x_{k} - \eta_k d_k -  x^{\ast} \right\|^2 = \left\|x_{k} - x^{\ast} \right\|^2 -  2\eta_k \left\langle  d_k, x_k -  x^{\ast} \right\rangle + \eta_k^2 \left\| d_k \right\|^2  \notag \\
  & \leq \left\|x_{k} - x^{\ast} \right\|^2 - \eta_k \left(f(x_k) - f^{\ast} \right) - 2 \eta_k \frac{1}{2L} \sum_{i=1}^{k}\beta^{k-i} \left\| \nabla f(x_i) \right\|^2 \notag \\
  & \leq \left\|x_{k} - x^{\ast} \right\|^2 - \eta_k \left(f(x_k) - f^{\ast} \right) - \frac{1}{L}\frac{f(x_k) - f^{\ast}}{\left\|d_k \right\|^2} \sum_{i=1}^{k}\beta^{k-i} \left\| \nabla f(x_i) \right\|^2 \notag \\
  & \mathop{\leq}^{(a)} \left\|x_{k} - x^{\ast} \right\|^2 - \eta_k \left(f(x_k) - f^{\ast} \right) - \frac{(1-\beta)}{L}\left(f(x_k) - f^{\ast} \right)
\end{align*}
where $(a)$ follows from the fact that
\begin{align}\label{dk:nablaf}
        \left\|d_k \right\|^2 &  = \left\| \beta d_{k-1} + \nabla f(x_k) \right\|^2 = \beta^2\left\|d_{k-1} \right\|^2 + \left\|\nabla f(x_k) \right\|^2 + 2\beta \left\langle d_{k-1}, \nabla f(x_k) \right\rangle  \notag \\
        & \mathop{\leq}^{(a)} \beta^2\left\|d_{k-1} \right\|^2 + \left\|\nabla f(x_k) \right\|^2 + \beta \left( \tau \left\|d_{k-1} \right\|^2 + \frac{1}{\tau}\left\| \nabla f(x_k) \right\|^2 \right) \notag \\
        & = \beta \left\|d_{k-1} \right\|^2 + \frac{1}{1-\beta}\left\|\nabla f(x_k) \right\|^2 \mathop{\leq}^{(b)}  \frac{1}{(1-\beta)}\sum_{i=1}^{k} \beta^{k-i} \left\|\nabla f(x_i) \right\|^2
    \end{align}
    where $(a)$ uses the Cauchy-Schwarz inequality and  we let $\tau = 1-\beta$ and $(b)$ follows from the induction that $\left\|d_i\right\|^2 \leq \beta \left\|d_{i-1}\right\|^2 + \frac{1}{1-\beta}\left\|\nabla f(x_i) \right\|^2$ for all $i=1,\cdots, k$ with $d_0=0$. Then the proof is complete.
\end{proof}  

\begin{theorem}({\bf ALR-MAG on non-smooth problems})\label{thm:mag:convex}
Consider the iterative scheme of MAG defined by (\ref{alg:mag}) and the step size is selected by (\ref{mag:lr}),
we derive the convergence guarantees for MAG in the following cases:
\begin{itemize}
    \item Suppose that the function $f$ is convex and its gradient is bounded, then $f(x_k) - f(x^{\ast}) \rightarrow 0$ ($k \rightarrow \infty)$.
        \item If the objective function $f$ is convex and its gradient (or subgradient) is bounded by $G^2$ (i.e. $\left\|\partial f(x) \right\|^2 \leq G^2$), we get that $  f(\hat{x}_k) - f^{\ast} \leq  \frac{G\left\|x_1 - x^{\ast} \right\|}{(1-\beta)\sqrt{k}}$ where $\hat{x}_k = \frac{1}{k} \sum_{i=1}^{k}x_i$.
    \item If the function $f$ is $\mu$-strongly convex and its gradient is bounded by $G^2$, then $\left\|x_{k} - x^{\ast} \right\|^2  \leq \frac{4G^2}{(1-\beta)^2\mu^2} \frac{1}{k}$.
  \item If the function is a polyhedral convex on $\R^d$ with $\kappa_1 >0$ and satisfies Assumption \ref{assumpt:polyhed}, then $\left\|x_k -x^{\ast}\right\|^2 $ promotes linear convergence with a rate at least $1- \frac{(1-\beta)^2\kappa_1^2}{G^2}$.
\end{itemize}
\end{theorem}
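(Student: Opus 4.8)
The plan is to reduce all four statements to a single master recursion and then specialize. The starting point is the first inequality of Lemma~\ref{lem:mag:xxstar}, valid for convex $f$ without smoothness: since the step size~(\ref{mag:lr}) is exactly $\eta_k = (f(x_k)-f^*)/\|d_k\|^2$, it reads $\|x_{k+1}-x^*\|^2 \le \|x_k-x^*\|^2 - (f(x_k)-f^*)^2/\|d_k\|^2$. To make the denominator harmless I would next bound $\|d_k\|$ uniformly. Inequality~(\ref{dk:nablaf}) was derived purely from the recursion $d_k = \beta d_{k-1} + g_k$ (with $g_k$ a subgradient) together with Cauchy--Schwarz, so it survives in the non-smooth setting and gives $\|d_k\|^2 \le \frac{1}{1-\beta}\sum_{i=1}^k \beta^{k-i}\|g_i\|^2$. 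Feeding in $\|g_i\|^2 \le G^2$ and summing the geometric series yields $\|d_k\|^2 \le G^2/(1-\beta)^2$. Substituting this into the descent inequality produces the master recursion $\|x_{k+1}-x^*\|^2 \le \|x_k-x^*\|^2 - \frac{(1-\beta)^2}{G^2}(f(x_k)-f^*)^2$, from which everything follows.

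For the first two claims (convexity only) I would telescope the master recursion from $1$ to $K$, obtaining $\sum_{k=1}^K (f(x_k)-f^*)^2 \le \frac{G^2}{(1-\beta)^2}\|x_1-x^*\|^2$. Letting $K\to\infty$ shows the series converges, hence $f(x_k)-f^* \to 0$, which is the first claim. For the ergodic rate I would apply Jensen's inequality to the average $\hat{x}_k$, giving $f(\hat{x}_k)-f^* \le \frac1k\sum_{i=1}^k (f(x_i)-f^*)$, and then Cauchy--Schwarz together with the summation bound above to get $f(\hat{x}_k)-f^* \le \frac{1}{\sqrt{k}}\big(\sum_{i=1}^k(f(x_i)-f^*)^2\big)^{1/2} \le \frac{G\|x_1-x^*\|}{(1-\beta)\sqrt{k}}$, matching the stated bound.

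For the strongly convex case I would insert the quadratic-growth consequence $f(x_k)-f^* \ge \frac{\mu}{2}\|x_k-x^*\|^2$ into the master recursion. Writing $a_k = \|x_k-x^*\|^2$, this gives $a_{k+1} \le a_k - c\,a_k^2$ with $c = \frac{(1-\beta)^2\mu^2}{4G^2}$, and a standard $O(1/k)$ recursion lemma (comparing $1/a_{k+1}$ with $1/a_k$, or induction using that $a\mapsto a-ca^2$ is increasing on $[0,\tfrac{1}{2c}]$) yields $a_k \le 1/(ck) = \frac{4G^2}{(1-\beta)^2\mu^2}\cdot\frac1k$. For the polyhedral case I would instead use the polyhedral error condition $f(x_k)-f^* \ge \kappa_1\|x_k-x^*\|$, so that $(f(x_k)-f^*)^2 \ge \kappa_1^2 a_k$; the master recursion then collapses to the linear contraction $a_{k+1} \le \big(1 - \frac{(1-\beta)^2\kappa_1^2}{G^2}\big)a_k$, giving the claimed rate. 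Assumption~\ref{assumpt:polyhed}(ii) supplies the bound $G$ used throughout, while part~(i) guarantees well-posedness.

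I expect the main obstacle to be twofold. First, one must verify carefully that~(\ref{dk:nablaf}) and the descent inequality of Lemma~\ref{lem:mag:xxstar} (and the angle condition of Lemma~\ref{lem:mag:convex} behind it) genuinely survive replacing gradients by subgradients, since those derivations were stated in the smooth/convex setting. Second, obtaining the precise constant $\frac{4G^2}{(1-\beta)^2\mu^2}$ in the strongly convex rate requires handling the base case of the recursion lemma and the degenerate possibility $c\,a_k \ge 1$ (in which case $a_{k+1}\le 0$ forces finite termination); the asymptotic $O(1/k)$ rate is routine, but the sharp constant needs this care.
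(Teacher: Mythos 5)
Your proposal is correct and follows essentially the same route as the paper's proof: both derive the master recursion $\|x_{k+1}-x^{\ast}\|^2 \le \|x_k-x^{\ast}\|^2 - \frac{(1-\beta)^2}{G^2}(f(x_k)-f^{\ast})^2$ from Lemma~\ref{lem:mag:xxstar}(i) combined with the bound (\ref{dk:nablaf}) and $\|\nabla f(x_i)\|^2\le G^2$, then specialize it per case (telescoping for convergence, Cauchy--Schwarz/Jensen for the ergodic rate, quadratic growth for the $O(1/k)$ rate, and the polyhedral error bound for the linear rate). Your two flagged obstacles are handled the same way in the paper --- the lemmas indeed only use the subgradient inequality, and the paper likewise settles the strongly convex case ``by induction,'' with the base case $\|x_1-x^{\ast}\|^2\le 4G^2/((1-\beta)^2\mu^2)$ following automatically from strong convexity and the gradient bound.
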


\begin{proof}({\bf of Theorem \ref{thm:mag:convex}})
\begin{itemize}
\item \textbf{Convergence (suppose that convex and gradient is bounded)}: Applying the result of Lemma \ref{lem:mag:xxstar}(i) and summing it from $k=0, \cdots, \infty$ gives
\begin{align}\label{inequ:converge}
    \lim_{k \rightarrow \infty}\sum_{i=1}^{k} \frac{f(x_k) - f(x^{\ast})^2}{\left\|d_k \right\|^2} \leq \left\|x_1 - x^{\ast} \right\|^2
\end{align}
Because the gradient is bounded by $G^2$, from (\ref{dk:nablaf}), we have 
    \begin{align}\label{dk:bounded}
       \left\|d_k \right\|^2  = \left\| \beta d_{k-1} + \nabla f(x_k) \right\|^2 
       & \leq \frac{1}{(1-\beta)}\sum_{i=1}^{k} \beta^{k-i} \left\|\nabla f(x_i) \right\|^2 \leq \frac{G^2}{(1-\beta)^2},
    \end{align}
    then $f(x_k) - f(x^{\ast}) \rightarrow 0$ ($k \rightarrow \infty$).
    \item \textbf{If the function is only convex and gradient is bounded}, then $f(\hat{x}_k) - f(x^{\ast}) \leq \mathcal{O}(1/\sqrt{k})$. Applying the result of Lemma \ref{lem:mag:xxstar}(i) and $\eta_k = \frac{f(x_k) - f^{\ast}}{\left\|d_k \right\|^2}$ with $\left\|d_k\right\|^2 \leq G^2/(1-\beta)^2$, we have
    \begin{align*}
     \left(\frac{1}{k}\sum_{i=1}^{k}(f(x_i) - f(x^{\ast}))\right)^2  & \mathop{\leq}^{(a)} \frac{1}{k}\sum_{i=1}^{k} [f(x_i) - f(x^{\ast})]^2 \leq \frac{G^2}{(1-\beta)^2 k }\left(\left\|x_1-x^{\ast} \right\|^2 - \left\|x_{k+1} - x^{\ast} \right\|^2 \right) \notag \\
     & \leq \frac{G^2}{(1-\beta)^2k}\left\|x_1-x^{\ast} \right\|^2
    \end{align*}
    where $(a)$ uses the Cauchy-Schwarz inequality that $\left(\frac{1}{k}\sum_{i=1}^k \alpha_i\right)^2 \leq \frac{1}{k}\sum_{i=1}^k\alpha_i^2$ for all $\alpha_i \geq 0$.
 By the convexity of $f$, we can obtain that
    \begin{align*}
        f(\hat{x}_k) - f^{\ast} \leq \frac{1}{k}\sum_{i=1}^k \left(f(x_i) - f(x^{\ast})\right) \leq  \frac{G\left\|x_1 - x^{\ast} \right\|}{(1-\beta)\sqrt{k}}.
    \end{align*}
    
    \item \textbf{If the objective function is strongly convex and the gradient is bounded}, then $\left\| x_k - x^{\ast} \right\|^2 \leq \mathcal{O}(1/k)$.
  
    If the objective function is $\mu$-strongly convex, we have $f(x_k) - f(x^{\ast}) \geq \frac{\mu}{2} \left\| x_k - x^{\ast} \right\|^2$. Due to the fact that gradient is bounded by $G^2$, by (\ref{dk:bounded}), we have $\left\|d_k \right\|^2 \leq \frac{G^2}{(1-\beta)^2}$
    and 
    \begin{align*}
      \left\|x_{k+1} - x^{\ast} \right\|^2 \leq \left\|x_{k} - x^{\ast} \right\|^2\left(1-\frac{(1-\beta)^2\mu^2}{4G^2} \left\|x_{k} - x^{\ast} \right\|^2 \right)
    \end{align*}
    We can achieve that $\left\|x_{k} - x^{\ast} \right\|^2  \leq \frac{4G^2}{(1-\beta)^2\mu^2} \frac{1}{k}$ by induction.

 \item {\bf If the function is polyhedral convex and Assumption~\ref{assumpt:polyhed} holds}, in this case, we know that the polyhedral error bound condition holds: there exists a constant $\kappa_1 > 0$ such that 
 \begin{align*}
    \left\|x -x^{\ast} \right\| \leq \frac{1}{\kappa_1} (f(x) - f^{\ast}), \,\, \forall x \in \mathcal{X}
 \end{align*}
Because the gradient (or subgradient) is bounded by $G^2$, that is $\max_{g \in \partial f(x_k)} \left\|g\right\|^2 \leq G^2$, from (\ref{dk:bounded}), we can achieve that
\begin{align*}
\left\| d_k \right\|^2 \leq \frac{G^2}{(1-\beta)^2}.
\end{align*}
Applying the result of Lemma \ref{lem:mag:xxstar} (i) and using the definition of $\eta_k$ in (\ref{mag:lr}), we have
\begin{align*}
    \left\|x_{k+1} -x^{\ast} \right\|^2 & \leq \left\|x_{k} -x^{\ast} \right\|^2 - \eta_k (f(x_k) - f^{\ast})  = \left\|x_{k} -x^{\ast} \right\|^2 -  \frac{(f(x_k) - f^{\ast})^2}{\left\|d_k \right\|^2} \notag \\ 
    & \leq \left\|x_{k} -x^{\ast} \right\|^2 - \frac{(1-\beta)^2}{G^2}(f(x) - f^{\ast})^2 \leq \left\|x_{k} -x^{\ast} \right\|^2 - \frac{(1-\beta)^2\kappa_1^2}{G^2}\left\|x_{k} -x^{\ast} \right\|^2 \notag \\
    & \leq  \left(1-\frac{(1-\beta)^2\kappa_1^2}{G^2}\right)\left\|x_{k} -x^{\ast} \right\|^2.
\end{align*}
In this case, $\left\|x_k -x^{\ast}\right\|^2 $ promotes linear convergence with a rate at least $1- \frac{(1-\beta)^2\kappa_1^2}{G^2}$. We must make sure that $1- \frac{(1-\beta)^2\kappa_1^2}{G^2} > 0$. If not, we can increase $G$ or decrease $\kappa_1$ to make the condition $1- \frac{(1-\beta)^2\kappa_1^2}{G^2} > 0$ hold.
\end{itemize}
\end{proof}

\begin{proof}({\bf Proof of Theorem \ref{thm:mag:smooth}})


 By the convexity and smoothness, we know that Lemma \ref{lem:mag:xxstar}(ii) holds. Then suppose that the objective function $f$ is semi-strongly convex with $\hat{\mu}$, we can achieve that 
\begin{align*}
        \left\|x_{k+1} - x^{\ast} \right\|^2 \leq \left(1-\frac{\hat{\mu}}{2} \left(\eta_k + \frac{(1-\beta)}{L}\right) \right)\left\|x_k -x^{\ast} \right\|^2 \leq \left(1-\frac{\hat{\mu}(1-\beta)}{2L} \right)\left\|x_k -x^{\ast} \right\|^2
\end{align*}
where $\eta_k \geq 0$.
That is $\left\|x_k-x^{\ast} \right\|^2$ promotes globally linear convergence with a rate at least $(1-(1-\beta)(2\kappa)^{-1})$ where $\kappa = L/\hat{\mu}$. 

\end{proof}

\subsection{Proofs of Theorems in Section \ref{sec:thm:sc}}
We provide the essential lemmas and the proofs for the important theorems in Section \ref{sec:thm:sc}. The results of ALR-SMAG for polyhedral convex and non-smooth functions and general convex functions which do not appear in the main content are presented in this part. 

The first lemma follows the result of Lemma \ref{lem:mag:convex} of ALR-MAG in the deterministic case but it is more complicated.
\begin{lemma}\label{lem:smag:convex}
For convex functions, if the step size $\eta_k \leq \frac{f_{S_k}(x_k) - f_{S_k}^{\ast}}{c\left\|d_k \right\|^2}$ for all $k\geq 1$, the iterates of SMAG satisfy that
$\left\langle d_{k-1}, x_k -x^{\ast}\right\rangle \geq \left(1-\frac{1}{c} \right)\sum_{i=1}^{k-1}\beta^{k-1-i}\left(f_{S_i}(x_i) - f_{S_i}^{\ast} \right) +  \sum_{i=1}^{k-1}\beta^{k-1-i}\left(f_{S_i}^{\ast} - f_{S_i}(x^{\ast})\right) $ for all $k\geq 2$.
\end{lemma}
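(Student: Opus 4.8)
The plan is to prove the bound by induction on $k$, mirroring the deterministic argument of Lemma~\ref{lem:mag:convex} but tracking two accumulated quantities instead of merely establishing a sign. The base case is $k=2$. Since $d_0 = \bm{0}$ we have $d_1 = \nabla f_{S_1}(x_1)$ and $x_2 - x_1 = -\eta_1 d_1$, so $\langle d_1, x_2 - x^{\ast}\rangle = -\eta_1\|d_1\|^2 + \langle \nabla f_{S_1}(x_1), x_1 - x^{\ast}\rangle$. Convexity of the mini-batch loss $f_{S_1}$ (an average of convex individual functions) gives $\langle \nabla f_{S_1}(x_1), x_1 - x^{\ast}\rangle \geq f_{S_1}(x_1) - f_{S_1}(x^{\ast})$, while the step-size condition $\eta_1\|d_1\|^2 \leq \tfrac1c(f_{S_1}(x_1) - f_{S_1}^{\ast})$ controls the first term. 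Splitting $f_{S_1}(x_1) - f_{S_1}(x^{\ast}) = (f_{S_1}(x_1) - f_{S_1}^{\ast}) + (f_{S_1}^{\ast} - f_{S_1}(x^{\ast}))$ and combining yields exactly the $k=2$ instance of the claim.

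For the inductive step I would write $\langle d_{k-1}, x_k - x^{\ast}\rangle = \langle d_{k-1}, x_k - x_{k-1}\rangle + \langle d_{k-1}, x_{k-1} - x^{\ast}\rangle$, use $x_k - x_{k-1} = -\eta_{k-1}d_{k-1}$ on the first inner product, and expand $d_{k-1} = \nabla f_{S_{k-1}}(x_{k-1}) + \beta d_{k-2}$ in the second. Convexity of $f_{S_{k-1}}$ bounds $\langle \nabla f_{S_{k-1}}(x_{k-1}), x_{k-1} - x^{\ast}\rangle$ from below by $f_{S_{k-1}}(x_{k-1}) - f_{S_{k-1}}(x^{\ast})$, and the step-size condition bounds $-\eta_{k-1}\|d_{k-1}\|^2$ from below by $-\tfrac1c(f_{S_{k-1}}(x_{k-1}) - f_{S_{k-1}}^{\ast})$. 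After the same splitting as in the base case, the non-momentum contribution becomes precisely the $i=k-1$ term of both target sums, namely $(1-\tfrac1c)(f_{S_{k-1}}(x_{k-1}) - f_{S_{k-1}}^{\ast}) + (f_{S_{k-1}}^{\ast} - f_{S_{k-1}}(x^{\ast}))$. Applying the induction hypothesis to $\beta\langle d_{k-2}, x_{k-1} - x^{\ast}\rangle$ supplies the $i=1,\dots,k-2$ terms; the extra factor $\beta$ shifts every weight $\beta^{k-2-i}$ to $\beta^{k-1-i}$, so the two pieces assemble into the full sums over $i=1,\dots,k-1$ and close the induction.

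The main obstacle --- and the reason this lemma is genuinely harder than its deterministic counterpart --- is that one can no longer conclude $\langle d_{k-1}, x_k - x^{\ast}\rangle \geq 0$. The residual terms $f_{S_i}^{\ast} - f_{S_i}(x^{\ast})$ are nonpositive (because $f_{S_i}^{\ast} = \inf_x f_{S_i}(x) \leq f_{S_i}(x^{\ast})$), so I must retain the \emph{exact} accumulated lower bound rather than discard these terms; under interpolation they vanish and the deterministic sign is recovered. The only remaining points requiring care are the bookkeeping of the geometric weights $\beta^{k-1-i}$ across the recursion, and verifying that convexity of the individual $f(\cdot;\xi)$ transfers to each mini-batch average $f_{S_k}$, which it does since a nonnegative average of convex functions is convex.
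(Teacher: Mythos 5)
Your proposal is correct and follows essentially the same argument as the paper's proof: induction on $k$ with base case $k=2$ using $d_1=\nabla f_{S_1}(x_1)$, then expanding $\left\langle d_{k-1}, x_k - x^{\ast}\right\rangle$ via $x_k - x_{k-1} = -\eta_{k-1}d_{k-1}$ and $d_{k-1} = \nabla f_{S_{k-1}}(x_{k-1}) + \beta d_{k-2}$, applying convexity plus the step-size bound to produce the $i=k-1$ term, and letting the factor $\beta$ shift the geometric weights from $\beta^{k-2-i}$ to $\beta^{k-1-i}$ when invoking the induction hypothesis. Your closing observations (that the residual terms $f_{S_i}^{\ast} - f_{S_i}(x^{\ast})$ are nonpositive and must be retained, vanishing under interpolation) accurately capture why the stochastic lemma cannot simply assert nonnegativity as in Lemma~\ref{lem:mag:convex}.
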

\begin{proof}({\bf of Lemma \ref{lem:smag:convex}})
    For $k = 2$, we have 
    \begin{align*}
        \left\langle d_{1}, x_2 -x^{\ast}\right\rangle & = \left\langle \nabla f_{S_1}(x_1), x_1 - \eta_1 \nabla f_{S_1}(x_1) - x^{\ast}\right\rangle = -\eta_1 \left\|\nabla f_{S_1}(x_1) \right\|^2 + \left\langle \nabla f_{S_1}(x_1), x_1 - x^{\ast}\right\rangle \notag\\
        & \geq -\frac{1}{c}\left(f_{S_1}(x_1) - f_{S_1}^{\ast}\right) + f_{S_1}(x_1) - f_{S_1}(x^{\ast}) = \left(1-\frac{1}{c} \right)\left(f_{S_1}(x_1) - f_{S_1}^{\ast} \right) + f_{S_1}^{\ast} -  f_{S_1}(x^{\ast}).
    \end{align*} 
    For $k > 2$, if the claim $\left\langle d_{k-2}, x_{k-1} -x^{\ast}\right\rangle \geq 
 \left(1-\frac{1}{c} \right)\sum_{i=1}^{k-2}\beta^{k-2-i}\left(f_{S_i}(x_i) - f_{S_i}^{\ast} \right) + \sum_{i=1}^{k-2}\beta^{k-2-i}\left(f_{S_i}^{\ast} - f_{S_i}(x^{\ast})\right)$ holds at $k-2$, we have
       \begin{align*}
        \left\langle d_{k-1}, x_k -x^{\ast}\right\rangle &   = \left\langle d_{k-1}, x_k - x_{k-1} + x_{k-1} 
        -x^{\ast}\right\rangle = \left\langle d_{k-1}, -\eta_{k-1}d_{k-1} + x_{k-1} 
        -x^{\ast}\right\rangle  \notag \\
        & =  - \eta_{k-1}\left\|d_{k-1}\right\|^2 + \left\langle \nabla f_{S_{k-1}}(x_{k-1}) + \beta d_{k-2},  x_{k-1} 
        -x^{\ast}\right\rangle \notag \\
        & = - \eta_{k-1}\left\|d_{k-1}\right\|^2 + \left\langle \nabla f_{S_{k-1}}(x_{k-1}),  x_{k-1} 
        -x^{\ast}\right\rangle + \beta \left\langle d_{k-2}, x_{k-1} 
        -x^{\ast}\right\rangle \notag \\
        & \mathop{\geq}^{(a)} - \eta_{k-1}\left\|d_{k-1}\right\|^2 +  \left(f_{S_{k-1}}(x_{k-1}) - f_{S_{k-1}}(
        x^{\ast}) \right)+ \beta \left\langle d_{k-2}, x_{k-1} 
        -x^{\ast}\right\rangle \notag \\
        & \geq -\frac{1}{c}\left(f_{S_{k-1}}(x_{k-1}) - f_{S_{k-1}}^{\ast} \right)+ \left(f_{S_{k-1}}(x_{k-1}) - f_{S_{k-1}}(
        x^{\ast}) \right) + \beta \sum_{i=1}^{k-2}\beta^{k-2-i}\left(f_{S_i}^{\ast} - f_{S_i}(x^{\ast})\right) \notag \\
        & \quad + \beta  \left(1-\frac{1}{c} \right)\sum_{i=1}^{k-2}\beta^{k-2-i}\left(f_{S_i}(x_i) - f_{S_i}^{\ast} \right) \notag \\
        & = \left(1-\frac{1}{c} \right)\sum_{i=1}^{k-1}\beta^{k-1-i}\left(f_{S_i}(x_i) - f_{S_i}^{\ast} \right) +  \sum_{i=1}^{k-1}\beta^{k-1-i}\left(f_{S_i}^{\ast} - f_{S_i}(x^{\ast})\right) 
     \end{align*}
     where $(a)$ follows from the convexity of $f$ that $\left\langle \nabla f_{S_{k-1}}(x_k), x_k -x^{\ast} \right\rangle \geq f_{S_{k-1}}(x_k) - f_{S_{k-1}}(x^{\ast}) $ and $\eta_{k-1} \leq \frac{f_{S_{k-1}}(x_{k-1}) - f_{S_{k-1}}^{\ast}}{\left\|d_{k-1} \right\|^2}$. That is to say, this claim holds at $k-1$. By induction, we can conclude that $\left\langle d_{k-1}, x_k -x^{\ast}\right\rangle \geq \left(1-\frac{1}{c} \right)\sum_{i=1}^{k-1}\beta^{k-1-i}\left(f_{S_i}(x_i) - f_{S_i}^{\ast} \right) +  \sum_{i=1}^{k-1}\beta^{k-1-i}\left(f_{S_i}^{\ast} - f_{S_i}(x^{\ast})\right) $ for all $k\geq 2$. The proof is complete.
     \end{proof}

\begin{proof}({\bf Proof of Theorem \ref{thm:semi:sc}})

In this case, the formula of step size for the stochastic version of ALR-MAG is
\begin{align*}
\eta_k =  \min \left\lbrace \frac{f_{S_k}(x_k) - f_{S_k}^{\ast}}{c\left\|d_k \right\|^2}, \eta_{\max} \right\rbrace.
\end{align*}
By the definition of step size, we have $\eta_k \leq \frac{f_{S_k}(x_k) - f_{S_k}^{\ast}}{c\left\|d_k \right\|^2}$. By Lemma \ref{lem:smag:convex}, we get that 
\begin{align*}
 \left\langle  d_k, x_k -  x^{\ast} \right\rangle & \geq \left\langle \nabla f_{S_k}(x_k), x_k -x^{\ast} \right\rangle  + \left(1-\frac{1}{c} \right)\sum_{i=1}^{k-1}\beta^{k-i}\left(f_{S_i}(x_i) - f_{S_i}^{\ast}\right)  - \sum_{i=1}^{k-1}\beta^{k-i}\left( f_{S_i}(x^{\ast}) - f_{S_i}^{\ast} \right)\notag \\
 & \geq \left(f_{S_k}(x_k) -  f_{S_k}^{\ast} \right)  + \left(1-\frac{1}{c} \right)\sum_{i=1}^{k-1}\beta^{k-i}\left(f_{S_i}(x_i) - f_{S_i}^{\ast}\right) - \sum_{i=1}^{k}\beta^{k-i}\left( f_{S_i}(x^{\ast}) - f_{S_i}^{\ast} \right).
\end{align*}
To make the analysis to be clear, we define a 0-1 event $X_k$. If $\eta_k = \frac{f_{S_k}(x_k) - f_{S_k}^{\ast}}{c\left\|d_k \right\|^2} \leq \eta_{\max}$, it implies that $X_k$ happens (i.e., $X_k=1$); otherwise, $X_k=0$. Let $P_k = P(X_k=1)$. First, we consider the event $X_k$ happens, then the distance $\left\|x_{k+1} - x^{\ast} \right\|^2$ can be estimated as 
\begin{align}\label{inequ:mag:xxk:sl}
    \left\|x_{k+1} - x^{\ast} \right\|^2 & = \left\|x_{k} - \eta_k d_k -  x^{\ast} \right\|^2 = \left\|x_{k} - x^{\ast} \right\|^2 -  2\eta_k \left\langle  d_k, x_k -  x^{\ast} \right\rangle + \eta_k^2 \left\| d_k \right\|^2  \notag \\
    & \leq \left\|x_{k} - x^{\ast} \right\|^2 -  2\frac{(f_{S_k}(x_k) - f_{S_k}^{\ast})^2}{ c \left\| d_k \right\|^2}  + \frac{(f_{S_k}(x_k) - f_{S_k}^{\ast})^2}{ c^2\left\| d_k \right\|^2} \notag \\
    & - \frac{2}{c}\left(1-\frac{1}{c} \right)\frac{f_{S_k}(x_k) - f_{S_k}^{\ast}}{\left\|d_k \right\|^2}\sum_{i=1}^{k-1}\beta^{k-i}\left(f_{S_i}(x_i) - f_{S_i}^{\ast}\right) + 2\eta_k \sum_{i=1}^{k}\beta^{k-i}\left( f_{S_i}(x^{\ast}) - f_{S_i}^{\ast} \right)\notag \\
    & \leq  \left\|x_{k} - x^{\ast} \right\|^2 -  \frac{1}{c^2}\frac{(f_{S_k}(x_k) - f_{S_k}^{\ast})^2}{\left\| d_k \right\|^2}- \frac{2\left(c-1 \right)}{c^2}\frac{f_{S_k}(x_k) - f_{S_k}^{\ast}}{\left\|d_k \right\|^2}\sum_{i=1}^{k}\beta^{k-i}\left(f_{S_i}(x_i) - f_{S_i}^{\ast}\right) \notag \\
    & \quad + 2\eta_k\sum_{i=1}^{k}\beta^{k-i}\left( f_{S_i}(x^{\ast}) - f_{S_i}^{\ast} \right).
\end{align}
The $L$-smooth property of $f_{S_i}$ for $i=1,\cdots, k$ gives
\begin{align}\label{inequ:mag:fx}
    \sum_{i=1}^{k}\beta^{k-i}\left(f_{S_i}(x_i) - f_{S_i}^{\ast}\right) \geq \frac{1}{2L}\sum_{i=1}^{k}\beta^{k-i} \left\|\nabla f_{S_i}(x_i) \right\|^2.
\end{align}
By (\ref{dk:nablaf}), we know that
$\left\|d_k \right\|^2   \leq \frac{1}{(1-\beta)} \sum_{i=1}^{k} \beta^{k-i}\left\| \nabla f_{S_i}(x_i) \right\|^2 $.
Applying (\ref{inequ:mag:fx}) and (\ref{dk:nablaf}) into (\ref{inequ:mag:xxk:sl}), we can achieve that
\begin{align}\label{inequ:ssc:case1}
\left\|x_{k+1} - x^{\ast} \right\|^2  
    & \leq \left\|x_{k} - x^{\ast} \right\|^2  - (1-\beta)\frac{\left(c-1 \right)}{c^2L} \left(f_{S_k}(x_k) - f_{S_k}^{\ast}\right) + 2\eta_k \sum_{i=1}^{k}\beta^{k-i}\left( f_{S_i}(x^{\ast}) - f_{S_i}^{\ast} \right) \notag \\
    & \leq \left\|x_{k} - x^{\ast} \right\|^2  - (1-\beta)\frac{\left(c-1 \right)}{c^2L} \left(f_{S_k}(x_k) - f_{S_k}^{\ast}\right) + 2\eta_{\max} \sum_{i=1}^{k}\beta^{k-i}\left( f_{S_i}(x^{\ast}) - f_{S_i}^{\ast} \right) \notag \\
    & \leq \left\|x_{k} - x^{\ast} \right\|^2  - (1-\beta)\frac{\left(c-1 \right)}{c^2L} \left(f_{S_k}(x_k) - f_{S_k}(x^{\ast})\right) + 2\eta_{\max} \sum_{i=1}^{k}\beta^{k-i}\left( f_{S_i}(x^{\ast}) - f_{S_i}^{\ast} \right)
\end{align}
where $\eta_k \leq \eta_{\max}$ and $f_{S_k}^{\ast} \leq f_{S_k}(x^{\ast})$.

If $\eta_k = \eta_{\max} < \frac{f_{S_k}(x_k) - f_{S_k}^{\ast}}{c\left\|d_k \right\|^2}$, that is $X_k = 0$, we have
\begin{align}\label{inequ:ssc:case2}
 \left\|x_{k+1} - x^{\ast} \right\|^2 & \leq \left\|x_{k} - x^{\ast} \right\|^2 -  2\eta_{\max}(f_{S_k}(x_k) - f_{S_k}^{\ast}) + \eta_{\max}\frac{(f_{S_k}(x_k) - f_{S_k}^{\ast})}{c\left\|d_k \right\|^2} \left\|d_k \right\|^2 \notag \\
    &  + 2\eta_{\max}\sum_{i=1}^{k}\beta^{k-i}\left( f_{S_i}(x^{\ast}) - f_{S_i}^{\ast} \right) - 2\eta_{\max}\left(1-\frac{1}{c} \right)\sum_{i=1}^{k-1}\beta^{k-i}\left(f_{S_i}(x_i) - f_{S_i}^{\ast}\right)  \notag \\
    & \mathop{\leq}^{(a)} \left\|x_{k} - x^{\ast} \right\|^2 -  \left(2-\frac{1}{c} \right)\eta_{\max} (f_{S_k}(x_k) - f_{S_k}^{\ast}) +2\eta_{\max}\sum_{i=1}^{k}\beta^{k-i}\left( f_{S_i}(x^{\ast}) - f_{S_i}^{\ast} \right) \notag\\
    & \mathop{\leq}^{(b)}  \left\|x_{k} - x^{\ast} \right\|^2 -  \left(2-\frac{1}{c} \right)\eta_{\max}(f_{S_k}(x_k) -  f_{S_k}(x^{\ast})) + 2\eta_{\max}\sum_{i=1}^{k}\beta^{k-i}\left( f_{S_i}(x^{\ast}) - f_{S_i}^{\ast} \right) 
\end{align}
where $(a)$ uses the truth that $c > 1$ and $f_{S_i}(x_i) \geq f_{S_i}^{\ast}$ 
for each $i \geq 1$, and $(b)$ follows from the fact that  $f_{S_k}(x^{\ast}) \geq f_{S_k}^{\ast} = \min f_{S_k}(x)$. 
Overall, no matter $\frac{f_{S_k}(x_k) - f_{S_k}^{\ast}}{c\left\|d_k \right\|^2} \leq \eta_{\max}$ or not, we both have
\begin{align}
 \left\|x_{k+1} - x^{\ast} \right\|^2 & \leq \left\|x_{k} - x^{\ast} \right\|^2  - \min\left((1-\beta)\frac{\left(c-1 \right)}{c^2L}, \left(2-\frac{1}{c} \right)\eta_{\max}\right)(f_{S_k}(x_k) - f_{S_k}^{\ast})  + 2\eta_{\max}\sum_{i=1}^{k}\beta^{k-i}\left( f_{S_i}(x^{\ast}) - f_{S_i}^{\ast} \right). 
\end{align}
We then take conditional expectation w.r.t. $\mathcal{F}_k$\footnote{$\mathcal{F}_k $ is the $\sigma$-algebra of the set $\left\lbrace (x_1, \nabla f_{S_1}(x_1)), \cdots,(x_{k-1}, \nabla f_{S_{k-1}}(x_{k-1})), x_k \right\rbrace$} on  the above inequalities:
\begin{align}
 \E[\left\|x_{k+1} - x^{\ast} \right\|^2 \mid \mathcal{F}_k & \leq \left\|x_{k} - x^{\ast} \right\|^2  - \min\left((1-\beta)\frac{\left(c-1 \right)}{c^2L}, \left(2-\frac{1}{c} \right)\eta_{\max}\right)\E[f_{S_k}(x_k) - f_{S_k}^{\ast} \mid \mathcal{F}_k]  \notag \\
 & + 2\eta_{\max}\sum_{i=1}^{k}\beta^{k-i}\left( \E[f_{S_i}(x^{\ast}) - f_{S_i}^{\ast}] \right) \notag \\
 & \mathop{\leq}^{(a)} \left\|x_{k} - x^{\ast} \right\|^2  - \min\left((1-\beta)\frac{\left(c-1 \right)}{c^2L}, \left(2-\frac{1}{c} \right)\eta_{\max}\right)\frac{\hat{\mu}}{2}\left\|x_k -x^{\ast} \right\|^2 +\frac{2\eta_{\max} \sigma^2}{(1-\beta)} \notag \\
 & \leq \left(1- \rho_1\right)\left\|x_{k} - x^{\ast} \right\|^2  +\frac{2\eta_{\max} \sigma^2}{(1-\beta)}.
\end{align}
where $\rho_1 = \min\left\lbrace \frac{(1-\beta)\left(c-1 \right)\hat{\mu}}{2c^2L} , \frac{(2c-1)\hat{\mu} \eta_{\max}}{2c}\right\rbrace$; and  $(a)$ uses the facts that $\E[f_{S_k}(x_k) - f_{S_k}(x^{\ast})\mid \mathcal{F}_k] = \E[f_{S_k}(x_k) - f_{S_k}(x^{\ast})\mid \mathcal{F}_k] = f(x_k) - f(x^{\ast})  \geq \frac{\hat{\mu}}{2}\left\|x_k -x^{\ast} \right\|^2$ and the assumption on $f_{S_k}^{\ast}$.
Telescoping the above inequality from $k=1$ to $K$ gives that
\begin{align*}
    \E[\left\|x_{K+1} - x^{\ast} \right\|^2  \mid \mathcal{F}_K]  \leq \left(1- \rho_1\right)^K\left\|x_{1} - x^{\ast} \right\|^2  +\frac{2\eta_{\max} \sigma^2}{\rho_1(1-\beta)}.
\end{align*}
The proof is complete. 
\end{proof}
Next, we consider the convergence of SMAG with (\ref{mag:lr:sc}) for the polyhedral convex functions which is a special category of nonsmooth and non-strongly convex functions.
\begin{theorem}({\bf Polyhedral convex and non-smooth functions})\label{thm:polyhedral:c} Under interpolation ($\sigma=0$), we suppose that function $f$ is polyhedral convex with $\hat{\kappa}$ and the gradient of each realization $\nabla f(x;\xi)$ is bounded by $G^2$. Consider the step size (\ref{mag:lr:sc}) with $c > 1/2$ and $\eta_{\max}=\infty$, we get that
\begin{align*}
\E[\left\|x_{k+1} - x^{\ast} \right\|^2] \leq \left(1- \rho_2\right)^k\left\|x_1 - x^{\ast} \right\|^2
\end{align*}
where $\rho_2 = \frac{\hat{\kappa}^2(1-\beta)^2\left(2c-1 \right)}{c^2bG^2}$.
\end{theorem}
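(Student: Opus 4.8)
The plan is to reuse the one-step machinery developed for Theorem~\ref{thm:semi:sc}, replacing every place where that proof invokes $L$-smoothness by the bounded-gradient and polyhedral-error ingredients available here. Since $\eta_{\max}=\infty$, the minimum in (\ref{mag:lr:sc}) is never active, so $\eta_k=(f_{S_k}(x_k)-f_{S_k}^{\ast})/(c\|d_k\|^2)$ at every step and only the ``event $X_k=1$'' branch of the earlier analysis is needed. Under interpolation $\sigma=0$, so $f_{S_i}(x^{\ast})=f_{S_i}^{\ast}$ for all $i$ and the entire $\sum_i\beta^{k-i}(f_{S_i}(x^{\ast})-f_{S_i}^{\ast})$ contribution vanishes. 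Expanding $\|x_{k+1}-x^{\ast}\|^2=\|x_k-x^{\ast}\|^2-2\eta_k\langle d_k,x_k-x^{\ast}\rangle+\eta_k^2\|d_k\|^2$ and inserting the lower bound on $\langle d_k,x_k-x^{\ast}\rangle$ from Lemma~\ref{lem:smag:convex} (with its interpolation term dropped) yields, exactly as in (\ref{inequ:mag:xxk:sl}), the one-step inequality $\|x_{k+1}-x^{\ast}\|^2\le\|x_k-x^{\ast}\|^2-\tfrac{2c-1}{c^2}\tfrac{(f_{S_k}(x_k)-f_{S_k}^{\ast})^2}{\|d_k\|^2}-\tfrac{2(c-1)}{c^2}\tfrac{f_{S_k}(x_k)-f_{S_k}^{\ast}}{\|d_k\|^2}\sum_{i=1}^{k-1}\beta^{k-i}(f_{S_i}(x_i)-f_{S_i}^{\ast})$.

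Where the smooth proof uses $f_{S_i}(x_i)-f_{S_i}^{\ast}\ge\frac{1}{2L}\|\nabla f_{S_i}(x_i)\|^2$ to turn the quadratic gap into a first-order one, I cannot, since $f$ is only polyhedral convex. Instead I would keep the quadratic term and control the denominator directly. Using the bounded-gradient assumption together with the recursion (\ref{dk:nablaf}), which bounds $\|d_k\|^2$ by a $\beta$-weighted sum of past $\|\nabla f_{S_i}(x_i)\|^2$, and the fact that each mini-batch gradient is an average of realization gradients of norm at most $G$, I obtain a uniform bound of the form $\|d_k\|^2\le bG^2/(1-\beta)^2$. The value of this step is that it replaces the random, iteration-dependent denominator $\|d_k\|^2$ by a deterministic constant, which is exactly what makes the later conditional expectation tractable.

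Next I would invoke the polyhedral error condition. Under interpolation $x^{\ast}$ minimizes each $f_{S_k}$, so the error bound gives $(f_{S_k}(x_k)-f_{S_k}^{\ast})^2\ge\hat{\kappa}^2\|x_k-x^{\ast}\|^2$. Substituting this together with the $\|d_k\|^2$ bound into the leading negative term turns it into $-\rho_2\|x_k-x^{\ast}\|^2$ with $\rho_2=\hat{\kappa}^2(1-\beta)^2(2c-1)/(c^2bG^2)$. Taking conditional expectation with respect to $\mathcal{F}_k$ and then telescoping the resulting contraction $\E[\|x_{k+1}-x^{\ast}\|^2\mid\mathcal{F}_k]\le(1-\rho_2)\|x_k-x^{\ast}\|^2$ from $1$ to $k$ delivers the claimed geometric rate.

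The main obstacle is the momentum cross term. For $c\ge1$ its coefficient $-2(c-1)/c^2$ is nonpositive and, since every $f_{S_i}(x_i)-f_{S_i}^{\ast}\ge0$, the whole term may simply be discarded, leaving the clean quadratic decrease. For the full range $c>1/2$ stated in the theorem the coefficient is positive on $(1/2,1)$, so the cross term must be absorbed rather than dropped; this is the delicate accounting the inductive Lemma~\ref{lem:smag:convex} is designed to supply, and it is where I would concentrate the effort. A secondary point is the order of operations in the expectation: one must bound $\|d_k\|^2$ uniformly \emph{before} conditioning, so that the polyhedral lower bound on $(f_{S_k}(x_k)-f_{S_k}^{\ast})^2$ can be carried through $\E[\cdot\mid\mathcal{F}_k]$ without the random denominator obstructing the estimate.
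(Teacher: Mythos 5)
Your proposal follows the paper's skeleton almost exactly (one-step expansion plus Lemma~\ref{lem:smag:convex}, interpolation killing the $f_{S_i}(x^{\ast})-f_{S_i}^{\ast}$ terms, the uniform bound on $\left\|d_k\right\|^2$ from (\ref{dk:nablaf}) replacing smoothness, then the error bound and telescoping), but it contains one genuine gap: you invoke the polyhedral error bound at the mini-batch level, claiming that interpolation gives $\left(f_{S_k}(x_k)-f_{S_k}^{\ast}\right)^2\geq \hat{\kappa}^2\left\|x_k-x^{\ast}\right\|^2$. This does not follow. The error bound is a hypothesis on the aggregate objective $f$ only; interpolation makes $x^{\ast}$ a minimizer of each $f_{S_k}$ but transfers no quantitative growth constant to individual batches. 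Concretely, take $f(x;\xi_1)=\max(x,0)$, $f(x;\xi_2)=\max(-x,0)$, so $f(x)=\tfrac{1}{2}|x|$: interpolation holds at $x^{\ast}=0$, $f$ is polyhedral convex with $\hat{\kappa}=\tfrac{1}{2}$, gradients are bounded, yet for a single-sample batch $S_k=\{\xi_1\}$ and $x_k<0$ one has $f_{S_k}(x_k)-f_{S_k}^{\ast}=0$ while $\left\|x_k-x^{\ast}\right\|>0$, so your per-batch inequality fails. The paper's proof avoids this by reversing the order of operations: after bounding $\left\|d_k\right\|^2\leq G^2/(1-\beta)^2$, it takes the conditional expectation \emph{first}, uses Jensen's inequality $\E[(f_{S_k}(x_k)-f_{S_k}(x^{\ast}))^2\mid\mathcal{F}_k]\geq\left(\E[f_{S_k}(x_k)-f_{S_k}(x^{\ast})\mid\mathcal{F}_k]\right)^2=(f(x_k)-f^{\ast})^2$, and only then applies the error bound to the deterministic quantity $f(x_k)-f^{\ast}$. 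Your closing remark, that the polyhedral lower bound on $(f_{S_k}(x_k)-f_{S_k}^{\ast})^2$ should be ``carried through'' the conditional expectation, is precisely the step that is invalid; the Jensen device is what makes the expectation tractable, not a per-batch growth condition.

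Two further points of comparison. First, on the cross term for $c\in(1/2,1)$: you are right that its coefficient $-2(c-1)/c^2$ is positive there and cannot simply be discarded, and you honestly defer this. Be aware, however, that the paper's own proof does not supply the absorption argument you were hoping Lemma~\ref{lem:smag:convex} would provide: it writes the one-step inequality with the cross term already removed, which is only justified when $c\geq 1$. So on this point your caution exposes a real weakness in the stated range $c>1/2$ rather than a gap in your reading. Second, your bound $\left\|d_k\right\|^2\leq bG^2/(1-\beta)^2$ carries a spurious factor $b$: an average of vectors of norm at most $G$ has norm at most $G$, so (\ref{dk:nablaf}) yields $\left\|d_k\right\|^2\leq G^2/(1-\beta)^2$ with no batch-size dependence; the $b$ appearing in the theorem's $\rho_2$ is an internal inconsistency of the paper, not something your averaging argument should reproduce.
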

For the interpolated functions, Theorem \ref{thm:polyhedral:c} generalizes the linear convergence rate beyond the smooth and semi-strongly convex functions.
\begin{proof}({\bf Proof of Theorem \ref{thm:polyhedral:c}}) Under the interpolation setting, it has $\min f(x;\xi) = f^{\ast} = f(x^{\ast}) $ and all loss function $f_i$ agrees with one common minimizer $x^{\ast}$.
We assume that the function $f(x)$ is polyhedral convex with $\hat{\kappa}>0$, that is $\left\|x - x^{\ast} \right\| \leq \frac{1}{\hat{\kappa}}(f(x) - f(x^{\ast}))$. Each realization function $f(x;\xi)$ is Lipschitz continuous (that is  $\left\|\nabla f(x;\xi) \right\|^2 \leq  G^2$ for all $x$). We consider the SMAG algorithm with the step size defined by (\ref{mag:lr:sc}) and $\eta_{\max} = \infty$. In this case, $L$-smooth property does not hold, that is to say, we can not use the Inequality (\ref{inequ:mag:fx}). Due to that $\left\|\nabla f(x_k;\xi) \right\|^2 \leq G^2$ which induces that $\left\|\nabla f_{S_k}(x_k) \right\|^2 \leq G^2$. By the relationship $\left\|d_k \right\|^2 \leq \frac{1}{1-\beta}\sum_{i=1}^{k}\beta^{k-i}\left\|\nabla f_{S_i}(x_i) \right\|^2 \leq \frac{G^2}{(1-\beta)^2}$, we still can achieve that
\begin{align}
  \left\|x_{k+1} - x^{\ast} \right\|^2 & \leq   \left\|x_{k} - x^{\ast} \right\|^2  - \frac{(1-\beta)^2\left(2c-1 \right)}{c^2} \frac{\left(f_{S_k}(x_k) - f_{S_k}^{\ast}\right)^2}{\left\|d_k \right\|^2} + 2\eta_k \sum_{i=1}^{k}\beta^{k-i}\left( f_{S_i}(x^{\ast}) - f_{S_i}^{\ast} \right) \notag\\
  & \leq  \left\|x_{k} - x^{\ast} \right\|^2  - (1-\beta)^2\frac{\left(2c-1 \right)}{c^2} \frac{\left(f_{S_k}(x_k) - f_{S_k}^{\ast}\right)^2}{G^2} + 2\eta_k \sum_{i=1}^{k}\beta^{k-i}\left( f_{S_i}(x^{\ast}) - f_{S_i}^{\ast} \right) \notag \\
  & \mathop{=}^{(a)} \left\|x_{k} - x^{\ast} \right\|^2  - (1-\beta)^2\frac{\left(2c-1 \right)}{c^2} \frac{\left(f_{S_k}(x_k) - f_{S_k}(x^{\ast})\right)^2}{G^2}
\end{align}
where $(a)$ uses the fact that $f_{S_k}^{\ast} = f_{S_k}(x^{\ast})$ for each $k \geq 1$.
Taking conditional expectation on the both side, we have 
\begin{align*}
   \E[ \left\|x_{k+1} - x^{\ast} \right\|^2 \mid \mathcal{F}_k]  & \leq \left\|x_k - x^{\ast} \right\|^2 - \frac{(1-\beta)^2\left(2c-1 \right)}{c^2 bG^2} \E[\left(f_{S_k}(x_k) - f_{S_k}(x^{\ast})\right)^2 \mid \mathcal{F}_k] \notag \\
   & \leq \left\|x_k - x^{\ast} \right\|^2 - \frac{(1-\beta)^2\left(2c-1 \right)}{c^2G^2} \left(\E[f_{S_k}(x_k) - f_{S_k}(x^{\ast})\mid \mathcal{F}_k]\right)^2  \notag \\
   & = \left\|x_k - x^{\ast} \right\|^2 - \frac{(1-\beta)\left(2c-1 \right)}{c^2G^2} \left(f(x_k) - f^{\ast}]\right)^2  \notag \\
   & \leq \left(1-\frac{\hat{\kappa}^2(1-\beta)^2\left(2c-1 \right)}{c^2G^2} \right)\left\|x_k - x^{\ast} \right\|^2
\end{align*}
For $k=1, \cdots, K$, we can achieve the linear convergence with a rate $\rho = 1- \frac{\hat{\kappa}^2(1-\beta)^2\left(2c-1 \right)}{c^2G^2}$. We now complete the proof.
\end{proof}

\begin{theorem}({\bf General convex functions})\label{thm:convex}
Assume that each individual function $f(x;\xi)$ is convex and $L$-smooth for $\xi \in \Xi$. Consider SMAG under step size  (\ref{mag:lr:sc}) with $c > 1$, we can achieve that 
\begin{align*}
  \E[f(\hat{x}_K) - f^{\ast}] \leq   \frac{1}{Q} \frac{\left\|x_1 - x^{\ast} \right\|^2}{K}  + \frac{2\eta_{\max}\sigma^2}{Q(1-\beta)}
\end{align*}
where $Q = \min\left((2-1/c)\eta_{\max}, (1-\beta)(c-1)/(c^2L)\right) $ and $\hat{x} = \frac{1}{K}\sum_{k=1}^{K} x_k$.
\end{theorem}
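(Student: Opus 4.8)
The plan is to reuse the per-step recursion already established in the proof of Theorem~\ref{thm:semi:sc} and then, instead of invoking semi-strong convexity to produce a contraction, to telescope the function-value gap and pass to the averaged iterate via Jensen's inequality. The key observation is that the derivation of the one-step inequalities (\ref{inequ:ssc:case1}) and (\ref{inequ:ssc:case2}) uses only convexity (through Lemma~\ref{lem:smag:convex}), $L$-smoothness (through inequality (\ref{inequ:mag:fx})), and the bound (\ref{dk:nablaf}) on $\|d_k\|^2$; none of these requires semi-strong convexity. Hence the combined bound
\[
\|x_{k+1}-x^{\ast}\|^2 \le \|x_k-x^{\ast}\|^2 - Q\,(f_{S_k}(x_k)-f_{S_k}(x^{\ast})) + 2\eta_{\max}\sum_{i=1}^k\beta^{k-i}\big(f_{S_i}(x^{\ast})-f_{S_i}^{\ast}\big),
\]
with $Q=\min\{(1-\beta)(c-1)/(c^2L),\,(2-1/c)\eta_{\max}\}$, holds verbatim for every convex, $L$-smooth realization.

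First I would take the conditional expectation with respect to $\mathcal{F}_k$. The unbiasedness of the mini-batch gives $\E[f_{S_k}(x_k)-f_{S_k}(x^{\ast})\mid\mathcal{F}_k]=f(x_k)-f^{\ast}$, the geometric weights satisfy $\sum_{i=1}^k\beta^{k-i}\le 1/(1-\beta)$, and Assumption~\ref{assumpt:noise:fstar} bounds each term $\E[f_{S_i}(x^{\ast})-f_{S_i}^{\ast}]\le\sigma^2$, so the noise contribution collapses to $2\eta_{\max}\sigma^2/(1-\beta)$. Taking full expectation and rearranging yields
\[
Q\,\E[f(x_k)-f^{\ast}] \le \E\|x_k-x^{\ast}\|^2 - \E\|x_{k+1}-x^{\ast}\|^2 + \frac{2\eta_{\max}\sigma^2}{1-\beta}.
\]

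Next I would telescope this inequality over $k=1,\dots,K$; the distance terms collapse, and dropping the nonnegative $\E\|x_{K+1}-x^{\ast}\|^2$ on the right leaves $Q\sum_{k=1}^K\E[f(x_k)-f^{\ast}]\le\|x_1-x^{\ast}\|^2 + 2K\eta_{\max}\sigma^2/(1-\beta)$. Dividing by $QK$ and applying Jensen's inequality to the convex $f$, namely $f(\hat{x}_K)\le\frac1K\sum_{k=1}^K f(x_k)$ with $\hat{x}_K=\frac1K\sum_{k=1}^K x_k$, delivers the claimed $\mathcal{O}(1/K)$ rate together with the residual $2\eta_{\max}\sigma^2/(Q(1-\beta))$.

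I do not expect a genuine obstacle here, since the heavy lifting---controlling $\langle d_k,x_k-x^{\ast}\rangle$ for the moving-averaged direction and the two-case split over $\eta_{\max}$---is already carried out in Theorem~\ref{thm:semi:sc}. The only point requiring mild care is that, unlike the semi-strongly convex case, I must \emph{retain} the function-value gap $f(x_k)-f^{\ast}$ rather than convert it into a factor of $\|x_k-x^{\ast}\|^2$; this is precisely what forces the use of the running average $\hat{x}_K$ and Jensen's inequality, and it is also the reason the bound is $\mathcal{O}(1/K)$ rather than linear.
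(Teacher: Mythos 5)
Your proposal is correct and follows essentially the same route as the paper's own proof: both reuse the one-step inequalities (\ref{inequ:ssc:case1}) and (\ref{inequ:ssc:case2}) from Theorem~\ref{thm:semi:sc} (noting they require only convexity, smoothness, and the bound (\ref{dk:nablaf}), not semi-strong convexity), bound the noise term via Assumption~\ref{assumpt:noise:fstar} and the geometric sum, telescope, and finish with Jensen's inequality on the averaged iterate. If anything, your handling of the expectations is slightly more careful than the paper's, which blurs conditional and full expectations over the past mini-batch noise terms.
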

The first observation is that the size of the solution's neighborhood is also proportional to $\eta_{\max}$, similar to the semi-strongly convex case of Theorem \ref{thm:semi:sc}. If the interpolation condition holds, SMAG under (\ref{mag:lr:sc}) can achieve an $\mathcal{O}(1/K)$ convergence rate to reach the optimum $f^{\ast}$.

\begin{proof}({\bf Proof of Theorem \ref{thm:convex}}) In this case, we consider the function is convex and $L$-smooth. 

Similar to Theorem \ref{thm:semi:sc}, (\ref{inequ:ssc:case1}) and (\ref{inequ:ssc:case2}) still hold. The only difference from Theorem \ref{thm:semi:sc} is that we do not have $f(x_k) - f(x^{\ast}) \geq \frac{\mu}{2}\left\|x_k - x^{\ast} \right\|^2$.
    Thus
    \begin{align*}
     \E[\left\|x_{k+1} - x^{\ast} \right\|^2  \mid \mathcal{F}_k] & 
     \leq \left\|x_{k} - x^{\ast} \right\|^2 - Q \left(\E[f_{S_k}(x_k) - f_{S_k}(x^{\ast}) \right) + 2\eta_{\max}\sum_{i=1}^k \beta^{k-i}\E[(f_{S_k}(x^{\ast}) - f_{S_k}^{\ast}) \mid \mathcal{F}_k] \notag \\
     & = \left\|x_{k} - x^{\ast} \right\|^2 - Q \left(f(x_k) - f(x^{\ast}) \right) + \frac{2\eta_{\max}\sigma^2}{1-\beta} 
    \end{align*}
    where $Q = \min\left\lbrace \frac{(1-\beta)\left(c-1 \right)}{c^2L} , \frac{(2c-1)\eta_{\max}}{c}\right\rbrace $. Summing the above inequality from $k=1$ to $K$ and dividing $Q$ to both side, we have 
    \begin{align*}
        f(\hat{x}_K) - f^{\ast} = \frac{1}{K}\sum_{k=1}^{K} (f(x_k) - f^{\ast}) & \leq  \frac{1}{K}\sum_{k=1}^{K}\frac{\left\|x_k -x^{\ast} \right\|^2 - \E[\left\|x_{k+1} - x^{\ast}\right\|^2 \mid \mathcal{F}_k]}{Q} + \frac{2\eta_{\max}\sigma^2}{(1-\beta)Q} \notag \\
        & \leq \frac{\left\|x_1 -x^{\ast} \right\|^2 }{K Q} + \frac{2\eta_{\max}\sigma^2}{(1-\beta)Q}.
    \end{align*}
 Now, the proof is complete.
\end{proof}

We now investigate the convergence of  ALR-SMAG  for a class of nonconvex functions. The quasar convex functions with respect to $x^{\ast} \in \mathcal{X}^{\ast}$ is an extension of star convexity~\citep{nesterov2006cubic} and convexity.
\begin{theorem}({\bf Quasar convex functions})\label{thm:quasar:convex}
Under interpolation ($\sigma=0$), we assume that each individual function $f(x;\xi)$ is $\zeta$-quasar-convex and $L$-smooth for $\xi \in \Xi$. Consider ALR-SMAG  with $c > 1/\zeta$, we can achieve that 
\begin{align*}
    \min_{i=1,\cdots, K} f(x_i) - f^{\ast} \leq \frac{Lc^2 }{(1-\beta)(\zeta c -1)} \frac{\left\|x_1 -x^{\ast} \right\|^2}{K}.
\end{align*}
\end{theorem}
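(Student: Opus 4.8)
The plan is to follow the template of the proof of Theorem~\ref{thm:semi:sc} and Corollary~\ref{thm:interpolation:ssc}, replacing convexity by $\zeta$-quasar convexity throughout and using interpolation to eliminate all the $\sigma$-type terms. Since $\eta_{\max}=\infty$, the step size is always $\eta_k=(f_{S_k}(x_k)-f_{S_k}^{\ast})/(c\|d_k\|^2)$, so there is only a single case to handle (unlike Theorem~\ref{thm:semi:sc}). Writing $a_i:=f_{S_i}(x_i)-f_{S_i}^{\ast}\ge 0$, the first thing I would record is that under interpolation $x^{\ast}$ minimizes every individual $f(\cdot;\xi)$, so averaging the quasar-convexity inequality over a mini-batch shows that each $f_{S}$ is $\zeta$-quasar convex with respect to the common $x^{\ast}$; equivalently $\langle \nabla f_{S}(x),x-x^{\ast}\rangle\ge \zeta\,(f_{S}(x)-f_{S}^{\ast})$ together with $f_{S}^{\ast}=f_{S}(x^{\ast})$.

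Next I would establish the quasar analog of Lemma~\ref{lem:smag:convex}: for all $k\ge 2$,
\[
\langle d_{k-1},\,x_k-x^{\ast}\rangle \;\ge\; \Big(\zeta-\tfrac{1}{c}\Big)\sum_{i=1}^{k-1}\beta^{\,k-1-i}a_i\;\ge\;0.
\]
This follows by induction from the identity $\langle d_{k-1},x_k-x^{\ast}\rangle=\langle \nabla f_{S_{k-1}}(x_{k-1}),x_{k-1}-x^{\ast}\rangle+\beta\langle d_{k-2},x_{k-1}-x^{\ast}\rangle-\eta_{k-1}\|d_{k-1}\|^2$, together with $\langle \nabla f_{S_{k-1}}(x_{k-1}),x_{k-1}-x^{\ast}\rangle\ge \zeta a_{k-1}$ and the step-size identity $\eta_{k-1}\|d_{k-1}\|^2=a_{k-1}/c$. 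The hypothesis $c>1/\zeta$ is precisely what makes $\zeta-1/c>0$, so the bound is nonnegative and the induction closes. Feeding this into $\langle d_k,x_k-x^{\ast}\rangle=\langle \nabla f_{S_k}(x_k),x_k-x^{\ast}\rangle+\beta\langle d_{k-1},x_k-x^{\ast}\rangle$ and regrouping gives the clean bound $\langle d_k,x_k-x^{\ast}\rangle\ge \tfrac{1}{c}a_k+(\zeta-\tfrac{1}{c})\sum_{i=1}^{k}\beta^{\,k-i}a_i$.

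I would then expand $\|x_{k+1}-x^{\ast}\|^2=\|x_k-x^{\ast}\|^2-2\eta_k\langle d_k,x_k-x^{\ast}\rangle+\eta_k^2\|d_k\|^2$, substitute $\eta_k=a_k/(c\|d_k\|^2)$ and the inner-product bound, and cancel terms to obtain
\[
\|x_{k+1}-x^{\ast}\|^2 \;\le\; \|x_k-x^{\ast}\|^2-\frac{a_k^2}{c^2\|d_k\|^2}-\frac{2(\zeta c-1)\,a_k}{c^2\|d_k\|^2}\sum_{i=1}^{k}\beta^{\,k-i}a_i .
\]
Here the crucial step is to relate $\|d_k\|^2$ to the weighted history of the gaps $a_i$. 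Smoothness of $f_{S_i}$ gives $\|\nabla f_{S_i}(x_i)\|^2\le 2L\,a_i$ (descent lemma, no convexity needed), and combined with \eqref{dk:nablaf} this yields $\sum_{i=1}^{k}\beta^{\,k-i}a_i\ge \tfrac{1-\beta}{2L}\|d_k\|^2$. Dropping the nonnegative term $a_k^2/(c^2\|d_k\|^2)$ and substituting this lower bound collapses everything to $\|x_{k+1}-x^{\ast}\|^2\le \|x_k-x^{\ast}\|^2-\tfrac{(\zeta c-1)(1-\beta)}{c^2 L}\,a_k$.

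Finally I would take conditional expectation with respect to $\mathcal{F}_k$: interpolation gives $\E[a_k\mid\mathcal{F}_k]=\E[f_{S_k}(x_k)-f_{S_k}(x^{\ast})\mid\mathcal{F}_k]=f(x_k)-f^{\ast}$, so telescoping from $k=1$ to $K$ and using $\E\|x_{K+1}-x^{\ast}\|^2\ge 0$ gives $\tfrac{(\zeta c-1)(1-\beta)}{c^2 L}\sum_{k=1}^{K}\E[f(x_k)-f^{\ast}]\le \|x_1-x^{\ast}\|^2$. Bounding the minimum (in expectation) by the average then delivers the stated rate with constant $\tfrac{Lc^2}{(1-\beta)(\zeta c-1)}$. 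I expect the main obstacle to be the inductive inner-product bound of the second paragraph, since it is what couples the entire momentum history $d_{k-1}$ with quasar convexity and produces the telescoping weighted sum; the smoothness-based link $\sum\beta^{k-i}a_i\gtrsim (1-\beta)\|d_k\|^2$ via \eqref{dk:nablaf} is the other delicate point, as it is what converts the momentum correction into the $(1-\beta)$ factor in the final rate.
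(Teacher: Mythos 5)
Your proposal is correct and takes essentially the same route as the paper's proof: the same inductive quasar analog of Lemma~\ref{lem:smag:convex} (with the interpolation term vanishing), the same expansion of $\left\|x_{k+1}-x^{\ast}\right\|^2$ using the step-size identity $\eta_k\left\|d_k\right\|^2 = a_k/c$, the same smoothness-plus-(\ref{dk:nablaf}) bound $\sum_{i=1}^{k}\beta^{k-i}a_i \geq \frac{1-\beta}{2L}\left\|d_k\right\|^2$, and the same conditional-expectation telescoping to reach the constant $\frac{Lc^2}{(1-\beta)(\zeta c-1)}$. The only differences are cosmetic: your regrouping $\frac{1}{c}a_k+(\zeta-\frac{1}{c})\sum_{i=1}^{k}\beta^{k-i}a_i$ is algebraically identical to the paper's form, you drop the nonnegative $a_k^2/(c^2\left\|d_k\right\|^2)$ term earlier than the paper does, and your handling of the final minimum in expectation is, if anything, slightly more careful than the paper's.
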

Under interpolation, Theorem \ref{thm:quasar:convex} provides an $\mathcal{O}\left(1/K\right)$ convergence guarantee to reach the optimum $f^{\ast}$ for a class of nonconvex functions for ALR-SMAG.
\begin{proof}({\bf Proofs of Theorem \ref{thm:quasar:convex}}) We assume that each individual function $f(x;\xi)$ is $\zeta$-quasar-convex and $L$-smooth. Under interpolation, it implies that each component function $f(x;\xi)$ agrees with a common minimizer $x^{\ast}$. That is to say: the mini-batch functions $f_{S_k}(x)$ is also $\zeta$-quasar-convex and satisfies
\begin{align*}
    \left\langle \nabla f_{S_k}(x_k), x_k -x^{\ast}\right\rangle \geq \zeta \left( f_{S_k}(x_k) - f_{S_k}^{\ast} \right)
\end{align*}
where $\zeta \in (0,1]$ and $k\geq 1$.  In this case, the result of Lemma \ref{lem:smag:convex} is 
\begin{align*}
\left\langle d_{k-1}, x_k -x^{\ast}\right\rangle & \geq  \left(\zeta -\frac{1}{c} \right)\sum_{i=1}^{k-1}\beta^{k-1-i}\left(f_{S_i}(x_i) - f_{S_i}^{\ast} \right) +  \zeta \sum_{i=1}^{k-1}\beta^{k-1-i}\left(f_{S_i}^{\ast} - f_{S_i}(x^{\ast})\right) \notag \\
& = \left(\zeta -\frac{1}{c} \right)\sum_{i=1}^{k-1}\beta^{k-1-i}\left(f_{S_i}(x_i) - f_{S_i}^{\ast} \right)
\end{align*}
where $\zeta > 1/c$.
Then
\begin{align*}
 \left\langle  d_k, x_k -  x^{\ast} \right\rangle & \geq \left\langle \nabla f_{S_k}(x_k), x_k -x^{\ast} \right\rangle  + \left(\zeta -\frac{1}{c} \right)\sum_{i=1}^{k-1}\beta^{k-i}\left(f_{S_i}(x_i) - f_{S_i}^{\ast}\right)  \notag \\
 & \geq \zeta \left(f_{S_k}(x_k) -  f_{S_k}^{\ast} \right)  + \left(\zeta -\frac{1}{c} \right)\sum_{i=1}^{k-1}\beta^{k-i}\left(f_{S_i}(x_i) - f_{S_i}^{\ast}\right).
\end{align*}
We consider the step size (\ref{mag:lr:sc}) and $\eta_{\max} = \infty$. The distance of $\left\|x_{k+1} -x^{\ast} \right\|^2$ can be evaluated as
\begin{align}\label{inequ:smag:quasar:1}
   \left\|x_{k+1} - x^{\ast} \right\|^2   &   = \left\|x_{k} - x^{\ast} \right\|^2    - 2\eta_k \left\langle d_k, x_k -x^{\ast} \right\rangle + \eta_k^2\left\|d_k \right\|^2 \notag \\
   & \leq \left\|x_{k} - x^{\ast} \right\|^2 - \frac{2(f_{S_k}(x_k) - f_{S_k}^{\ast})}{c\left\|d_k \right\|^2} \left(\zeta \left(f_{S_k}(x_k) -  f_{S_k}^{\ast} \right)  + \left(\zeta -\frac{1}{c} \right)\sum_{i=1}^{k-1}\beta^{k-i}\left(f_{S_i}(x_i) - f_{S_i}^{\ast}\right) \right) \notag \\
   & \quad + \eta_k\frac{(f_{S_k}(x_k) - f_{S_k}^{\ast})}{c\left\|d_k \right\|^2}\left\|d_k \right\|^2.
\end{align}
By the smoothness property of each $f(x;\xi)$ and $\left\|d_k \right\|^2 \leq \frac{1}{1-\beta}\sum_{i=1}^{k}\beta^{k-i}\left\|\nabla f_{S_i}(x_i) \right\|^2$, we obtain that
\begin{align*}
   \sum_{i=1}^{k-1}\beta^{k-i}\left(f_{S_i}(x_i) - f_{S_i}^{\ast}\right)  \geq \frac{1}{2L}\sum_{i=1}^k \beta^{k-i}\left\|\nabla f_{S_i}(x_i)\right\|^2 \geq \frac{(1-\beta)}{2L}\left\|d_k \right\|^2.
\end{align*}
Incorporating the above inequality to (\ref{inequ:smag:quasar:1}) gives that
\begin{align}\label{inequ:smag:quasar:2}
  \left\|x_{k+1} - x^{\ast} \right\|^2   &   \leq \left\|x_{k} - x^{\ast} \right\|^2 - \frac{(1-\beta)(\zeta c -1)}{Lc^2}\left( f_{S_k}(x_k) - f_{S_k}^{\ast} \right) - \left(2\zeta - \frac{1}{c} \right)\eta_k \left(f_{S_k}(x_k) - f_{S_k}^{\ast}\right) \notag \\
  & \leq \left\|x_{k} - x^{\ast} \right\|^2 - \frac{(1-\beta)(\zeta c -1)}{Lc^2}\left( f_{S_k}(x_k) - f_{S_k}^{\ast} \right)
\end{align}
where the last inequality holds since $\zeta > 1/c$. Taking conditional expectation w.r.t. $\mathcal{F}_k$ on the both side of (\ref{inequ:smag:quasar:2}), we achieve that
\begin{align*}
    \E[ \left\|x_{k+1} - x^{\ast} \right\|^2 \mid \mathcal{F}_k] \leq \left\|x_{k} - x^{\ast} \right\|^2 - \frac{(1-\beta)(\zeta c -1)}{Lc^2}\left( f(x_k) - f^{\ast}. \right)
\end{align*}
Diving the above inequality by a constant $Q_1 = \frac{(1-\beta)(\zeta c -1)}{Lc^2}$ and summing over $k=1,\cdots, K$ gives that
\begin{align*}
    \min_{i=1,\cdots, K} f(x_i) - f^{\ast} \leq \frac{1}{K}\sum_{i=1}^{K}\left( f(x_i) - f^{\ast}\right) \leq \frac{1}{Q_1}\left(\E[\left\|x_{k} - x^{\ast} \right\|^2] -  \E[ \left\|x_{k+1} - x^{\ast} \right\|^2]\right) \leq \frac{Lc^2 \left\|x_1 -x^{\ast} \right\|^2}{K(1-\beta)(\zeta c -1)}.
\end{align*}
We now complete the proof.
\end{proof}

\subsection{Theoretical Guarantees of ALR-HB on Least-Squares Problems}\label{sec:hb:ls}
In this part, we consider the theoretical convergence of HB under the step size defined by (\ref{mad:hb:lr:1}) or (\ref{mad:hb:lr:L}) and get the fast linear convergence rate for ALR-HB on least-squares problems. 

We recall the step size (\ref{mad:hb:lr:L}) that is ALR-HB(v2):
\begin{align*}
    \eta_k = \frac{1}{2L} + \frac{f(x_k) - f^{\ast}}{\left\|\nabla f(x_k) \right\|^2} + \beta \frac{\left\langle \nabla f(x_k), x_k -x_{k-1} \right\rangle}{\left\|\nabla f(x_k) \right\|^2}.
\end{align*}
In general, the step size (\ref{mad:hb:lr:L}) may be not positive if $\left\langle \nabla f(x_k), x_k -x_{k-1} \right\rangle \ll - \left(f(x_k) - f^{\ast} \right)$. It means that the momentum direction $x_{k} - x_{k-1}$ has an acute angle with $-\nabla f(x_k) $ and it also promotes the reduction on the function values, just as $-\nabla f(x_k)$. In this way, from the formula (\ref{mad:hb:lr:L}), the weight on $-\nabla f(x_k)$  will be reduced. However, we still choose to trust $-\nabla f(x_k)$ more which is the exact descent direction, compared to the momentum direction $x_k -x_{k-1}$. Thus, we truncate the step size to be a constant when the inner product $\left\langle \nabla f(x_k), x_k -x_{k-1} \right\rangle \leq - \left(f(x_k) - f^{\ast} \right)$. 
 We define $$\tilde{\eta}_k = \frac{f(x_k) - f^{\ast}}{\left\|\nabla f(x_k) \right\|^2} + \beta \frac{\left\langle \nabla f(x_k), x_k -x_{k-1} \right\rangle}{\left\|\nabla f(x_k) \right\|^2} - \frac{1-\beta}{2L}.$$ 
 When $\left\langle \nabla f(x_k), x_k -x_{k-1} \right\rangle \geq - \left(f(x_k) - f^{\ast} \right) $, we can see that $\tilde{\eta}_k \geq 0$. Then the step size can be re-written as
\begin{align}\label{truncated:HB;Polyak:lr}
\eta_k =  \frac{1}{2L} + \frac{f(x_k) - f^{\ast}}{\left\|\nabla f(x_k) \right\|^2} + \beta \frac{\left\langle \nabla f(x_k), x_k -x_{k-1} \right\rangle}{\left\|\nabla f(x_k) \right\|^2} = \frac{2-\beta}{2L} +  \tilde{\eta}_k  \tag{Truncated ALR-HB(v2)}
\end{align}
If $\left\langle \nabla f(x_k), x_k -x_{k-1} \right\rangle < - \left(f(x_k) - f^{\ast} \right) $, we set $\tilde{\eta}_k = 0$ and the step size $\eta_k = \frac{2-\beta}{2L}$. In the numerical experiment on least-squares in Section \ref{sec:least:squares}, such a lower bound $(2-\beta)/(2L)$ for ALR-HB(v2) never hits. For the step size defined by (\ref{mad:hb:lr:1}) without $L$, the truncated lower bound is $(1-\beta)/(2L)$. This is a very small number for example when we set $\beta = 0.9$ which is commonly used in practice.
\begin{theorem}({\bf ALR-HB(v2) for least-squares problems})\label{thm:hb:ls}
For the least-squares problem, consider the heavy-ball method defined by (\ref{alg:HB}) and truncated step size by (\ref{truncated:HB;Polyak:lr}), we can derive the following property:
    \begin{align*}
         \left\|  \begin{bmatrix}
        x_{k+1} - x^{\ast} \\
        x_k -  x^{\ast}
        \end{bmatrix}  \right\|^2 
        & = \left\|\begin{bmatrix}
       (1+\beta)\I_d - \hat{\alpha}  A & -\beta \I_d \\
        \I_d & \bf{0}
        \end{bmatrix} 
        \begin{bmatrix}
        x_{k} - x^{\ast} \\
        x_{k-1} -  x^{\ast}
        \end{bmatrix}  \right\|^2 - \tilde{\eta}_k^2 \left\| \nabla f(x_k) \right\|^2.  
    \end{align*}
\end{theorem}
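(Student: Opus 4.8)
The plan is to work entirely in the error coordinates $e_k := x_k - x^\ast$ and to exploit that, for the least-squares objective, the gradient is linear: writing $A$ for the Hessian we have $\nabla f(x_k) = A e_k$, $f^\ast = 0$, and the exact identity $\langle \nabla f(x_k), x_k - x^\ast\rangle = e_k^\top A e_k = 2\,(f(x_k) - f^\ast)$. First I would rewrite one heavy-ball step, $x_{k+1} = x_k - \eta_k\nabla f(x_k) + \beta(x_k - x_{k-1})$, on the stacked state $[e_{k+1};\,e_k]$. Splitting the step size into its constant reference part and its adaptive part, $\eta_k = \hat\alpha + \tilde\eta_k$ with $\hat\alpha = (2-\beta)/(2L)$, the constant gradient term $\hat\alpha\nabla f(x_k) = \hat\alpha A e_k$ is exactly what turns the top-left block into $(1+\beta)\I_d - \hat\alpha A$, while the residual $-\tilde\eta_k\nabla f(x_k)$ is carried as a separate correction:
\begin{align*}
\begin{bmatrix} e_{k+1} \\ e_k \end{bmatrix}
= \begin{bmatrix} (1+\beta)\I_d - \hat\alpha A & -\beta\I_d \\ \I_d & \mathbf{0}\end{bmatrix}
\begin{bmatrix} e_k \\ e_{k-1}\end{bmatrix}
- \begin{bmatrix}\tilde\eta_k\,\nabla f(x_k) \\ \mathbf{0}\end{bmatrix}.
\end{align*}

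Next I would expand the squared Euclidean norm of the left-hand side. Denoting the top block of the matrix–vector product by $\hat e_{k+1} = (x_k - x^\ast) + \beta(x_k - x_{k-1}) - \hat\alpha\nabla f(x_k)$, the bottom blocks contribute an identical $\|e_k\|^2$ to both sides and cancel, so the claim reduces to the single-vector statement
\begin{align*}
\|e_{k+1}\|^2 = \|\hat e_{k+1}\|^2 - 2\tilde\eta_k\langle \hat e_{k+1}, \nabla f(x_k)\rangle + \tilde\eta_k^2\|\nabla f(x_k)\|^2 .
\end{align*}
Matching this against the target $\|\hat e_{k+1}\|^2 - \tilde\eta_k^2\|\nabla f(x_k)\|^2$ leaves exactly the cross-term identity $\langle \hat e_{k+1}, \nabla f(x_k)\rangle = \tilde\eta_k\|\nabla f(x_k)\|^2$ to be verified.

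This cross-term identity is the heart of the argument, and I would read it as a completion-of-squares statement. Writing $v := (x_k - x^\ast) + \beta(x_k - x_{k-1})$, so that $\hat e_{k+1} = v - \hat\alpha\nabla f(x_k)$ and $e_{k+1} = v - \eta_k\nabla f(x_k)$, the required identity is equivalent to $\langle v, \nabla f(x_k)\rangle = \eta_k\|\nabla f(x_k)\|^2$, i.e.\ to the statement that $\eta_k$ is the exact minimizer of $\eta\mapsto\|v - \eta\nabla f(x_k)\|^2 = \|x_{k+1}(\eta) - x^\ast\|^2$. Here the quadratic structure does the work: $\langle v,\nabla f(x_k)\rangle = \langle x_k-x^\ast,\nabla f(x_k)\rangle + \beta\langle x_k-x_{k-1},\nabla f(x_k)\rangle$, and substituting the quadratic identity $\langle \nabla f(x_k), x_k - x^\ast\rangle = 2(f(x_k)-f^\ast)$ turns the optimal ratio into the closed-form adaptive step; the surviving $-\tilde\eta_k^2\|\nabla f(x_k)\|^2$ term is then precisely the distance lost by replacing the distance-minimizing step with the constant reference $\hat\alpha$.

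I expect the main obstacle to be exactly this last verification. The truncated ALR-HB(v2) step is obtained from the smoothness-based lower bound $\langle \nabla f(x_k), x_k - x^\ast\rangle \ge (f(x_k)-f^\ast) + \tfrac1{2L}\|\nabla f(x_k)\|^2$, which is generally slack; what makes the cross term collapse cleanly is that for a quadratic the same inner product admits the exact evaluation $2(f(x_k)-f^\ast)$, so throughout the computation one must track the quadratic \emph{equalities} rather than the inequalities used to motivate the step size, and keep careful account of how the constant $\tfrac1{2L}$ piece interacts with $\hat\alpha$ in $\tilde\eta_k$. Once the cross term is reduced to $\tilde\eta_k\|\nabla f(x_k)\|^2$, the remainder is routine bookkeeping.
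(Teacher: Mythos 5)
Your set-up is exactly the paper's: write the step on the stacked error vector, split $\eta_k=\hat\alpha+\tilde\eta_k$ with $\hat\alpha=(2-\beta)/(2L)$, expand the square, and reduce everything to the cross term. The gap is that the identity you reduce everything to --- $\langle v,\nabla f(x_k)\rangle=\eta_k\|\nabla f(x_k)\|^2$ with $v=(x_k-x^\ast)+\beta(x_k-x_{k-1})$, i.e.\ that the ALR-HB(v2) step is the \emph{exact} minimizer of $\eta\mapsto\|v-\eta\nabla f(x_k)\|^2$ --- is false. Evaluate both sides exactly for $f(x)=\frac{1}{2}\|x-x^\ast\|_A^2+f^\ast$: the left side is $2\bigl(f(x_k)-f^\ast\bigr)+\beta\langle\nabla f(x_k),x_k-x_{k-1}\rangle$, while the untruncated step gives $\eta_k\|\nabla f(x_k)\|^2=\bigl(f(x_k)-f^\ast\bigr)+\frac{1}{2L}\|\nabla f(x_k)\|^2+\beta\langle\nabla f(x_k),x_k-x_{k-1}\rangle$. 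Their difference is
\begin{align*}
\langle v,\nabla f(x_k)\rangle-\eta_k\|\nabla f(x_k)\|^2
=\bigl(f(x_k)-f^\ast\bigr)-\tfrac{1}{2L}\|\nabla f(x_k)\|^2
=\tfrac{1}{2}(x_k-x^\ast)^{T}\bigl(A-A^2/L\bigr)(x_k-x^\ast)\;\geq\;0,
\end{align*}
which vanishes only when $x_k-x^\ast$ lies in the eigenspace of $\lambda_{\max}(A)=L$. The step size is built from the smoothness lower bound $\langle\nabla f(x_k),x_k-x^\ast\rangle\geq (f(x_k)-f^\ast)+\frac{1}{2L}\|\nabla f(x_k)\|^2$, which for a quadratic is generically \emph{strict}; ``tracking the quadratic equalities,'' as you propose, does not make the cross term collapse --- it makes the slack explicit. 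Carrying it through, the exact identity (writing $y_k$ for the stacked error and $D$ for the block matrix in the statement) is
\begin{align*}
\|y_{k+1}\|^2=\|Dy_k\|^2-\tilde\eta_k^2\|\nabla f(x_k)\|^2
-2\tilde\eta_k\Bigl[\bigl(f(x_k)-f^\ast\bigr)-\tfrac{1}{2L}\|\nabla f(x_k)\|^2\Bigr],
\end{align*}
with an extra nonpositive term that your argument silently discards.

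What this reveals is that the theorem's ``$=$'' is itself an overstatement: the paper's own proof establishes only ``$\leq$'' --- its step $(a)$ plugs the smoothness inequality into the cross term and uses $\tilde\eta_k\geq 0$ --- and then sloppily records the result as an equality. The inequality is all the downstream argument needs, since the spectral analysis only uses $\|y_{k+1}\|\leq\|Dy_k\|$. So the repair of your proof is not a cleverer evaluation of the cross term; it is to state the result as an inequality, which your expansion already yields once you note that the discarded term is $\leq 0$ (and equals $0$ in the truncated case $\tilde\eta_k=0$), or to state the exact identity displayed above. The equality as literally claimed cannot be proven, because it fails whenever $\tilde\eta_k>0$ and $x_k-x^\ast$ has a component outside the top eigenspace of $A$.
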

where $\hat{\alpha}=(2-\beta)/(2L)$.
Especially, if the problem is $\mu$-strongly convex and $L$-smooth, we set $\beta = \left( \frac{\sqrt{\kappa}-1}{\sqrt{\kappa}+1}\right)^2$ where $\kappa = L/\mu$ and $\mu=\lambda_{\min}(A), L = \lambda_{\max}(A)$, we can achieve the linear convergence rate at least
    \begin{align*}
         \left\|  \begin{bmatrix}
        x_{k+1} - x^{\ast} \\
        x_k -  x^{\ast}
        \end{bmatrix}  \right\|^2 
        & =\rho^k \left\| 
        \begin{bmatrix}
        x_{2} - x^{\ast} \\
        x_{1} -  x^{\ast}
        \end{bmatrix}  \right\|^2.  
    \end{align*}
where $\rho = 1- \frac{4-\sqrt{15}}{2(\sqrt{\kappa}+1)}$.
\begin{proof}({\bf of Theorem \ref{thm:hb:ls}})
 We consider the least-squares problem,  
\begin{align*}
    f(x)= \frac{1}{2}x^{T}Ax + \left\langle x, b\right\rangle + c = \frac{1}{2}\left\|x-x^{\ast} \right\|_A^2 + f^{\ast}
\end{align*}
where $A \in \R^{d\times d}$ is symmetric and positive definite, $x^{\ast} = -A^{-1}b$ and $f^{\ast} = -\frac{1}{2}b^{T}A^{-1}b + c$,
and its gradient $\nabla f(x) = Ax +  b = A(x-x^{\ast})$. Recall the truncated step size of ALR-HB(v2), we let $\hat{\alpha}= \frac{2-\beta}{2L}$ and $\tilde{\eta}_k = \frac{f(x_k) - f^{\ast}}{\left\|\nabla f(x_k) \right\|^2} + \beta \frac{\left\langle \nabla f(x_k), x_k -x_{k-1} \right\rangle}{\left\|\nabla f(x_k) \right\|^2} - \frac{1-\beta}{2L}$. When $\left\langle \nabla f(x_k), x_k -x_{k-1} \right\rangle \geq - \left(f(x_k) - f^{\ast} \right) $, we can see that $\tilde{\eta}_k \geq 0$ and
\begin{align*}
\eta_k =  \frac{1}{2L} + \frac{f(x_k) - f^{\ast}}{\left\|\nabla f(x_k) \right\|^2} + \beta \frac{\left\langle \nabla f(x_k), x_k -x_{k-1} \right\rangle}{\left\|\nabla f(x_k) \right\|^2} = \frac{2-\beta}{2L} +  \tilde{\eta}_k
\end{align*}
If $\left\langle \nabla f(x_k), x_k -x_{k-1} \right\rangle \leq - \left(f(x_k) - f^{\ast} \right) $, we have $\tilde{\eta}_k = 0$ and $\eta_k = \frac{2-\beta}{2L}$.
The iterative formula of HB can be re-written as
  \begin{align*}
        \begin{bmatrix}
        x_{k+1} - x^{\ast} \\
        x_k -  x^{\ast}
        \end{bmatrix} 
        & = \begin{bmatrix}
       (1+\beta)\I_d & -\beta \I_d \\
        \I_d & \bf{0}
        \end{bmatrix} 
        \begin{bmatrix}
        x_{k} - x^{\ast} \\
        x_{k-1} -  x^{\ast}
        \end{bmatrix}  
        - {\eta}_k   
        \begin{bmatrix}
        \nabla f(x_k) \\
        \bf{0}
        \end{bmatrix}  \\ \notag \\
        & = \begin{bmatrix}
        (1+\beta)\I_d - \hat{\alpha} A  & -\beta \I_d \\
        \I_d & \bf{0}
        \end{bmatrix} 
        \begin{bmatrix}
        x_{k} - x^{\ast} \\
        x_{k-1} -  x^{\ast}
        \end{bmatrix}  
        - \tilde{\eta}_k   
        \begin{bmatrix}
        \nabla f(x_k) \\
        \bf{0}
        \end{bmatrix}.
    \end{align*}
    Then
     \begin{align}\label{inequ:hb:poly}
      \left\|  \begin{bmatrix}
        x_{k+1} - x^{\ast} \\
        x_k -  x^{\ast}
        \end{bmatrix}  \right\|^2 
        & = \left\|\begin{bmatrix}
       (1+\beta)\I_d - \hat{\alpha}  A & -\beta \I_d \\
        \I_d & \bf{0}
        \end{bmatrix} 
        \begin{bmatrix}
        x_{k} - x^{\ast} \\
        x_{k-1} -  x^{\ast}
        \end{bmatrix}  
        - \tilde{\eta}_k    
        \begin{bmatrix}
        \nabla f(x_k) \\
        \bf{0}
        \end{bmatrix} \right\|^2 \notag \\
        & = \left\|\begin{bmatrix}
       (1+\beta)\I_d - \hat{\alpha}  A & -\beta \I_d \\
        \I_d & \bf{0}
        \end{bmatrix} 
        \begin{bmatrix}
        x_{k} - x^{\ast} \\
        x_{k-1} -  x^{\ast}
        \end{bmatrix}  \right\|^2 + \tilde{\eta}_k^2 \left\| \nabla f(x_k) \right\|^2  \notag \\ 
        & \quad - 2\tilde{\eta}_k   \begin{bmatrix}
        \nabla f(x_k)^{T} & 
        \bf{0}^{T}
        \end{bmatrix} \begin{bmatrix}
       (1+\beta)\I_d - \hat{\alpha}  A & -\beta \I_d \\
        \I_d & \bf{0}
        \end{bmatrix} 
        \begin{bmatrix}
        x_{k} - x^{\ast} \\
        x_{k-1} -  x^{\ast}
        \end{bmatrix} \notag \\
        & = \left\|\begin{bmatrix}
       (1+\beta)\I_d - \hat{\alpha}  A & -\beta \I_d \\
        \I_d & \bf{0}
        \end{bmatrix} 
        \begin{bmatrix}
        x_{k} - x^{\ast} \\
        x_{k-1} -  x^{\ast}
        \end{bmatrix}  \right\|^2 + \tilde{\eta}_k^2 \left\| \nabla f(x_k) \right\|^2  \notag \\
        & \quad - 2\tilde{\eta_k} \left( \left\langle \nabla f(x_k), x_k -x^{\ast}\right\rangle + \beta \left\langle \nabla f(x_k), x_k - x_{k-1} \right\rangle - \hat{\alpha} \left\|\nabla f(x_k) \right\|^2\right) \notag \\
        & \mathop{\leq}^{(a)} \left\|\begin{bmatrix}
       (1+\beta)\I_d - \hat{\alpha}  A & -\beta \I_d \\
        \I_d & \bf{0}
        \end{bmatrix} 
        \begin{bmatrix}
        x_{k} - x^{\ast} \\
        x_{k-1} -  x^{\ast}
        \end{bmatrix}  \right\|^2 + \tilde{\eta}_k^2 \left\| \nabla f(x_k) \right\|^2  \notag \\
        & \quad - 2\tilde{\eta}_k\left(f(x_k) - f^{\ast} + \frac{1}{2L}\left\|\nabla f(x_k) \right\|^2 + \beta \left\langle \nabla f(x_k), x_k - x_{k-1} \right\rangle - \frac{2-\beta}{2L}\left\|\nabla f(x_k) \right\|^2 \right) \notag \\
        & \mathop{=}^{(b)} \left\|\begin{bmatrix}
       (1+\beta)\I_d - \hat{\alpha}  A & -\beta \I_d \\
        \I_d & \bf{0}
        \end{bmatrix} 
        \begin{bmatrix}
        x_{k} - x^{\ast} \\
        x_{k-1} -  x^{\ast}
        \end{bmatrix}  \right\|^2 - \tilde{\eta}_k^2 \left\| \nabla f(x_k) \right\|^2 
     \end{align}  
    where $(a)$ follows from $\left\langle \nabla f(x_k), x_k -x^{\ast}\right\rangle \geq f(x_k) - f^{\ast} + \frac{1}{2L}\left\|\nabla f(x_k) \right\|^2$ and $\nabla f(x_k) = A(x_k -x^{\ast})$, and $(b)$ uses the formula of step size $\tilde{\eta}_k = \frac{f(x_k) - f(x^{\ast})}{\left\|\nabla f(x_k) \right\|^2} + \beta \frac{\left\langle \nabla f(x_k), x_k -x_{k-1} \right\rangle}{\left\|\nabla f(x_k) \right\|^2} - \frac{1-\beta}{2L}$. If $\left\langle \nabla f(x_k), x_k -x_{k-1} \right\rangle \leq - \left(f(x_k) - f^{\ast} \right) $, the above result is also correct due to that $\tilde{\eta}_k = 0$. Overall, we can derive that
    \begin{align}\label{main:inequ:hb}
         \left\|  \begin{bmatrix}
        x_{k+1} - x^{\ast} \\
        x_k -  x^{\ast}
        \end{bmatrix}  \right\|^2 
        & = \left\|\begin{bmatrix}
       (1+\beta)\I_d - \hat{\alpha}  A & -\beta \I_d \\
        \I_d & \bf{0}
        \end{bmatrix} 
        \begin{bmatrix}
        x_{k} - x^{\ast} \\
        x_{k-1} -  x^{\ast}
        \end{bmatrix}  \right\|^2 - \tilde{\eta}_k^2 \left\| \nabla f(x_k) \right\|^2. 
    \end{align}
    Let 
    \begin{align*}
       y_k := \begin{bmatrix}
       x_k-x^{\ast}  \\
       x_{k-1}-x^{\ast}
        \end{bmatrix}, \quad  \,\,  D := \begin{bmatrix}
       (1+\beta)\I_d - \hat{\alpha} A & -\beta \I_d \\
        \I_d & \bf{0}
        \end{bmatrix}.
    \end{align*}
 By (\ref{main:inequ:hb}), we obtain the exponential decrease in $\left\|y_k \right\|^2$:
 \begin{align*}
     \left\|y_{k+1} \right\| \leq \left\|D y_k \right\| \leq \left\|D^{k} y_1 \right\| \leq \left\| D^k\right\|_2\left\|y_1 \right\| \leq \left(\rho(D) + o(1)) \right)^{k}\left\|y_1 \right\|
 \end{align*}
     where $\rho(D)$ is the spectrum of $D$. In order to explicitly derive the convergence rate of ALR-HB (v2), we will turn to the eigenvalues of $D$. Furthermore, we can see that $D$ is permutation-similar to a block diagonal matrix with $2\times 2$ block $D_i$, that is
     \begin{align*}
         D \sim
         \begin{bmatrix}
       D_1 & \bf{0} & \cdots &  \bf{0} \\
       \bf{0}  & D_2  &  \cdots & \bf{0} \\
      \vdots &  & \cdots   & \vdots \\
     \bf{0}  & \bf{0}  & \cdots &  D_d
        \end{bmatrix} \, \text{where} \, D_i = 
        \begin{bmatrix}
               1+\beta - \hat{\alpha} \lambda_{i} & -\beta \\
               1 & 0
        \end{bmatrix} \,\text{for}\, \, i=1, 2, \cdots, d_1.
     \end{align*}
     Therefore, to get the eigenvalues of $D$, it is sufficient to compute the eigenvalues for all $D_i$. For any $i \in [d_1]$, the eigenvalues of the $2\times 2$ matrix are the roots of the quadratic function:
     \begin{align}
       L(s):=  s^2 - (1+\beta - \hat{\alpha} \lambda_{i})s + \beta = 0, \,\text{where}\,\, \Delta_i = (1+\beta-\hat{\alpha} \lambda_i)^2 - 4\beta
     \end{align}
     where $\hat{\alpha} = (2-\beta)/(2L)$ and $L=\lambda_{\max}$.       Because $\lambda_i / L \leq 1$, we have  $1+\beta-\hat{\alpha} \lambda_i =  1+\beta- (2-\beta)\lambda_i/(2L) \geq 1+\beta- (2-\beta)/2 = \frac{3\beta}{2} > 0$.  In this case, if $\Delta_i  \leq 0$, it is equivalent to $1+\beta-\hat{\alpha} \lambda_i \leq 2\sqrt{\beta}$. If $\beta \geq \min_{i}  \left(1- \sqrt{\frac{\lambda_i(L+\lambda_i)}{2L^2}} \right)^2  := \left(1- \sqrt{\frac{(1+\kappa)}{2\kappa^2}} \right)^2 $, then $\Delta_i \leq 0$ for each $i$. The best convergence rate is achieved by choosing $\beta = \hat{\beta}:=\left(1- \sqrt{\frac{(1+\kappa)}{2\kappa^2}} \right)^2$. The convergence rate is linear with $\rho(D) = \sqrt{\beta}$. 
     
     In the numerical experiments, we found that $\beta=\beta^{\ast}$ performs better. What is its convergence rate if we set $\beta=\beta^{\ast} < \hat{\beta}$? That is to say: there are $\Delta_i$ for $i \in [d_1]$ such that $\Delta_i > 0$. The quadratic function $L(s)$ must have two solutions denoted by $s_1 < s_2$. Because $L(0)=\beta >0$ and $L(1) = \hat{\alpha} \lambda_i > 0$, $s_1s_2 = \beta >0$, and $s_1 + s_2 = 1+ \beta - \hat{\alpha} \lambda_i > 0$. We can claim that $s_1 \in (0,\beta)$ and $s_2 \in (\beta, 1)$. The worst-case convergence is decided by the value of $s_2$. Next, we show that if $\beta = \beta^{\ast}$, 
     \begin{align*}
        s_2 & = \frac{(1+\beta)- \frac{2-\beta}{2L}\lambda_i + \sqrt{\left((1+\beta)- \frac{2-\beta}{2L}\lambda_i \right)^2 - 4\beta }}{2} \notag \\
        & = \frac{(1+\beta)- \frac{2-\beta}{2L}\lambda_i + \sqrt{(1-\beta)^2 - \frac{1}{\kappa}}}{2} \leq 1 - \frac{4-\sqrt{15}}{2(\sqrt{\kappa}+1)} + o\left(\frac{1}{\sqrt{\kappa}+1}\right)
     \end{align*}
     In this case, the convergence rate is at least $\rho^k$ where $\rho \approx 1 - \frac{4-\sqrt{15}}{2(\sqrt{\kappa}+1)} < 1$. Therefore, we have proved the linear convergence for ALR-HB(v2) with the rate at least $\rho = 1 - \frac{4-\sqrt{15}}{2(\sqrt{\kappa}+1)}$.

\end{proof}

\section{Supplementary Numerical Results and Details}\label{append:numerical}

\subsection{Details of Section \ref{sec:least:squares}  for Least-Squares Problems}
\label{append:least:squares}
In this part, we provide the details of the experiments on the least-squares problem.
The dimension $d_1=d=1000$ and the condition number $\kappa=10^4$. The theoretical optimal momentum parameter $\beta^{\ast} = 0.9606$. The initial point is randomly generated and then fix it for the different test algorithms.  If the step size is not specified, we select it from the set $\left\lbrace 10^{-3}, 10^{-2}, 10^{-1}, 1, 10^1, 10^2\right\rbrace$. For the L$^4$Mom method, we choose the momentum parameter from $\beta \in \left\lbrace 0.5, 0.9, 0.95, \beta^{\ast}, 0.99 \right\rbrace$ and the hyper-parameter $\alpha \in \left\lbrace 0.0001, 0.001, 0.01, 0.015, 0.1, 0.15, 1\right\rbrace$  as the original paper~\citep{L4}.

 If parameters $\mu$ and $L$ are unknown a priori, the details of the parameters are listed below: HB with best-tuned constant step size $\eta = 0.01$ and the best-tuned momentum parameter $\beta=0.99$ (its optimal value $\beta^{\ast}=0.9606$); For ALR-MAG and ALR-HB, we set $\beta = 0.95$; In L$^4$Mom, we choose $\beta = 0.95$ and $\alpha=0.01$.


\subsection{Results on Logistic Regression Problems}\label{sec:experiment:logistic}
  
To illustrate the practical behavior of ALR-SMAG and ALR-SHB in the convex interpolation setting, we perform experiments on logistic regression with both synthetic data and a classification dataset from LIBSVM~\footnote{\url{https://www.csie.ntu.edu.tw/~cjlin/libsvmtools/datasets/}}. We test our algorithms ALR-SHB and ALR-SMAG and compare with SPS\_${\max}$~\citep{SPS}, SGDM under constant step size, AdSGD~\citep{AdSGD}, SAHB~\citep{AHB}, and L$^4$Mom~\citep{L4}. Note that we do not estimate $f_{S_k}^{\ast}$ every iterate but set $f_{S_k}^{\ast}=0$.

First, we follow the experiments described in section 4.1 of SPS~\citep{SPS} on synthetic data for logistic regression. We do the grid search for all the parameters that are not specified and choose the best based on their practical performance.  The details of the parameters in synthetic experiments are given below: (1) SPS\_${\max}$~\citep{SPS} with $c \in \left\lbrace 0.1, 0.2, 0.5, 1, 5, 10, 20\right\rbrace$ and $ \eta_{\max} = 
\left\lbrace 0.01, 0.1, 1, 10, 100 \right\rbrace$: we set $\eta_{\max}=100$ and $c=5$; (2) SGD with momentum (SGDM) with best tuned constant step size: $\eta \in \left\lbrace 0.01, 0.1, 1, 10, 100 \right\rbrace$ and we choose $\eta = 10$ and $\beta = 0.9$; (3) AdSGD~\citep{AdSGD}: $\lambda_0 = 1$ with the pair of the parameters $(\sqrt{1 + 0.01 \theta}, 1/L_k)$; (4) SAHB~\citep{AHB}: we set $\gamma_1 = 1.2, \gamma = 1, C = 10$; (5) L$^4$Mom~\citep{L4}, the main parameter $\alpha \in \left\lbrace 0.001,0.0015, 0.01, 0.015, 0.1 0.15\right\rbrace$ (0.15 is recommended value, but we found that $\alpha=0.01$ works better in this case) and $\beta = 0.9$; (6) Our algorithms: $c \in \left\lbrace 0.1, 0.5, 1,  5,  10 \right\rbrace$, $ \eta_{\max} = \left\lbrace 0.01, 0.1, 1, 10, 100 \right\rbrace$ and $\beta = 0.9$: for ALR-SMAG, we choose $\eta_{\max} =100$ and $c=5$; for ALR-SHB, we set $\eta_{\max} = 100$ and $c=5$. The result is reported in Figure \ref{fig:ls:with:syn}. We observe that for HB and ALR-HB, the function value drops faster than other algorithms at the early stage of the training. After 400 steps, our algorithms ALR-SHB and ALR-SMAG perform better than others. 

\begin{figure}[ht]
\begin{center}
     \begin{subfigure}[b]{0.45\textwidth}
 \includegraphics[width=\textwidth]{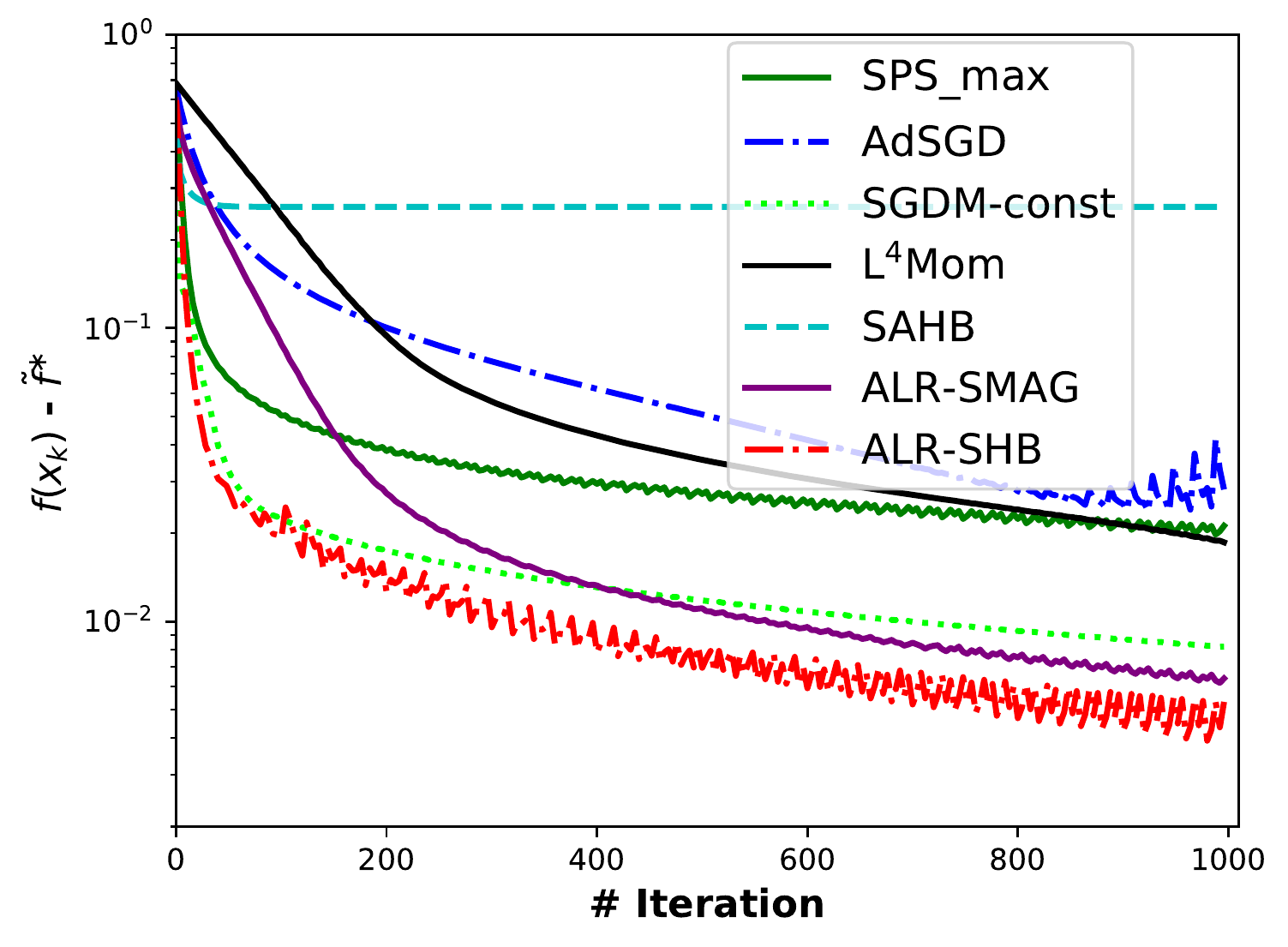}
   \captionsetup{justification=centering}
  \caption{Results on synthetic dataset}
         \label{fig:ls:with:syn}
  \end{subfigure}
 \hfill
  \begin{subfigure}[b]{0.45\textwidth}
\includegraphics[width=\textwidth]{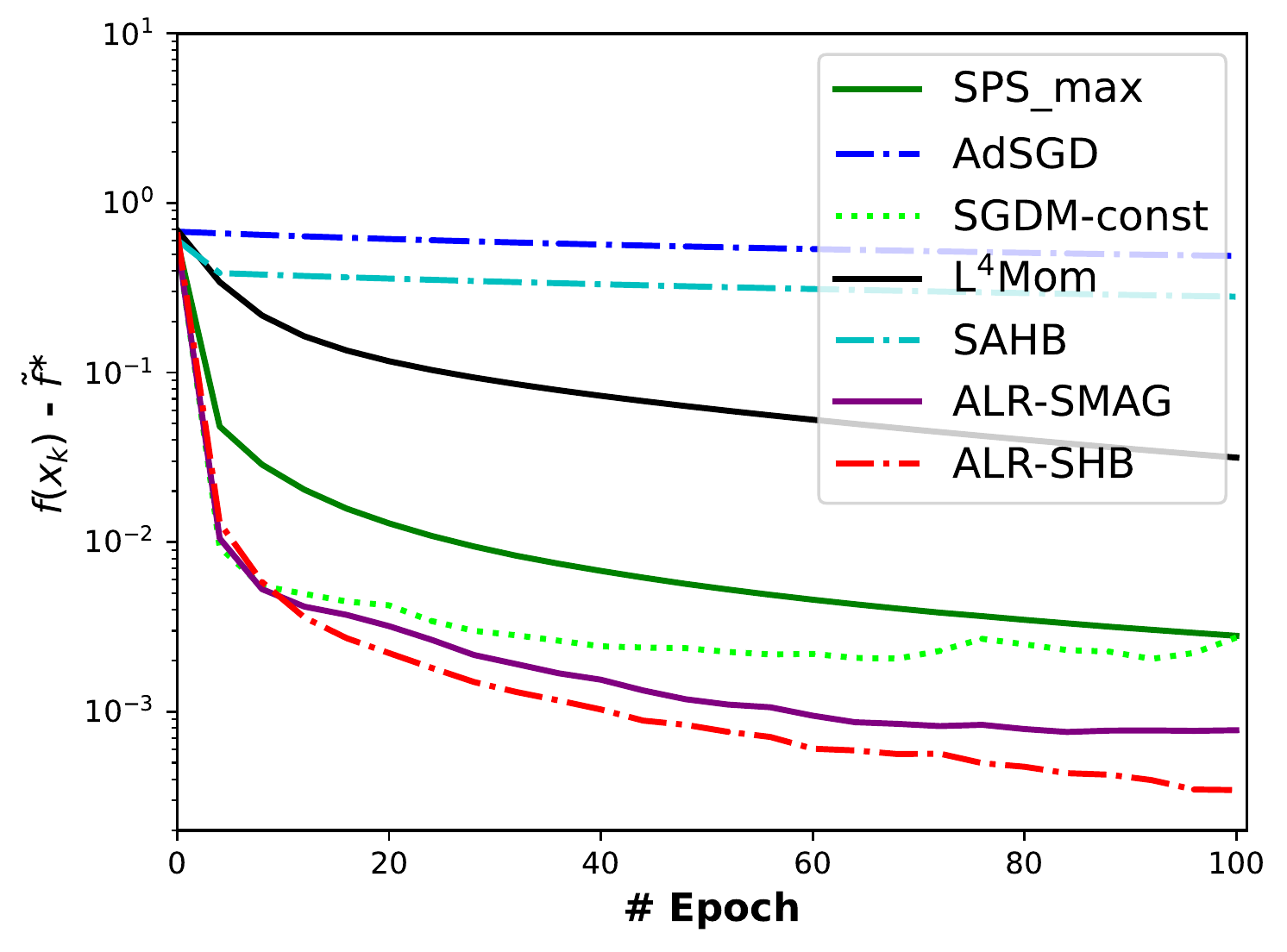}
\captionsetup{justification=centering}
         \caption{Results on rcv1}
         \label{fig:lr:rcv}
       \end{subfigure}
        \caption{Logistic regression}
        \label{fig:lr:st}
                
       \end{center}
\end{figure}


Similar to the synthetic dataset, we test logistic regression on a real binary
classification dataset RCV1 $(n = 20242; d = 47236)$ where a 0.75 partition of the dataset
is used for training, and the rest is for the test.   The batch size $b=100$ and the maximum epoch call is 100.  We can see that the optimality $f(x) - \tilde{f}^{\ast}$\footnote{$\tilde{f}^{\ast}$ is the estimation of $f^{\ast}$ by running heavy-ball for a very long time.} for ALR-SHB and ALR-SMAG 
 drops faster than others.  The details of the algorithms are: for SPS\_${\max}$~\citep{SPS}, we set $\eta_{\max}=100$ and $c=0.5$ (recommended from their paper); We set $\eta = 10$ for SGDM  and momentum parameter $\beta=0.9$; For L$^4$Mom~\citep{L4}, the parameter $\alpha=0.0015$ (0.15 is recommended value, but we found that $\alpha=0.0015$ works better) and $\beta = 0.9$; For AdSGD~\citep{AdSGD}, we set $(\sqrt{(1+0.01\theta)}, 1/L_k)$ . For SAHB~\citep{AHB}, we set $\gamma_1 = 1, \gamma = 0.5, C = 100$.
For our algorithms ALR-SHB and ALR-SMAG, we set $\eta_{\max} =100$ and $c=5$ for ALR-SMAG and  $\eta_{\max} = 100$ and $c=10$ for ALR-SHB.


\subsection{Numerical Results on CIFAR10 and Parameters Details of Section \ref{sec:numerical:dnn}}\label{append:cifar}

First, we provide the results on CIFAR10 with ResNet34~\citep{he2016deep}. In this experiment, we set the parameters for the tested algorithms as below: SGDM under constant step size $\eta = 0.01$; Adam with step size $\eta=0.001$ and $(\beta_1, \beta_2)=(0.9, 0.999)$; L$^4$Mom~\citep{L4} with $\alpha=0.01$; SPS\_${\max}$~\citep{SPS} with $\eta_0=0.1$ and $c=0.2$ with smoothing technique to update $\eta_{\max}$; SLS-acc~\citep{SLS} with $\eta_{0} = 1$ and $c=0.1$; ALR-SHB with $c=0.5$ and $\eta_{\max}=0.01$ (with the warmup, under $\eta_{\max}=0.1\min(10^{-4}k, 1)$ and $c=0.5$); ALR-SMAG with $c=0.1$ and $\eta_{\max}=0.01$ (with warmup under $\eta_{\max}=0.1\min(10^{-4}k, 1)$ and $c=0.1$).
\begin{figure}[ht]
\begin{center}
\includegraphics[width=0.45\textwidth]{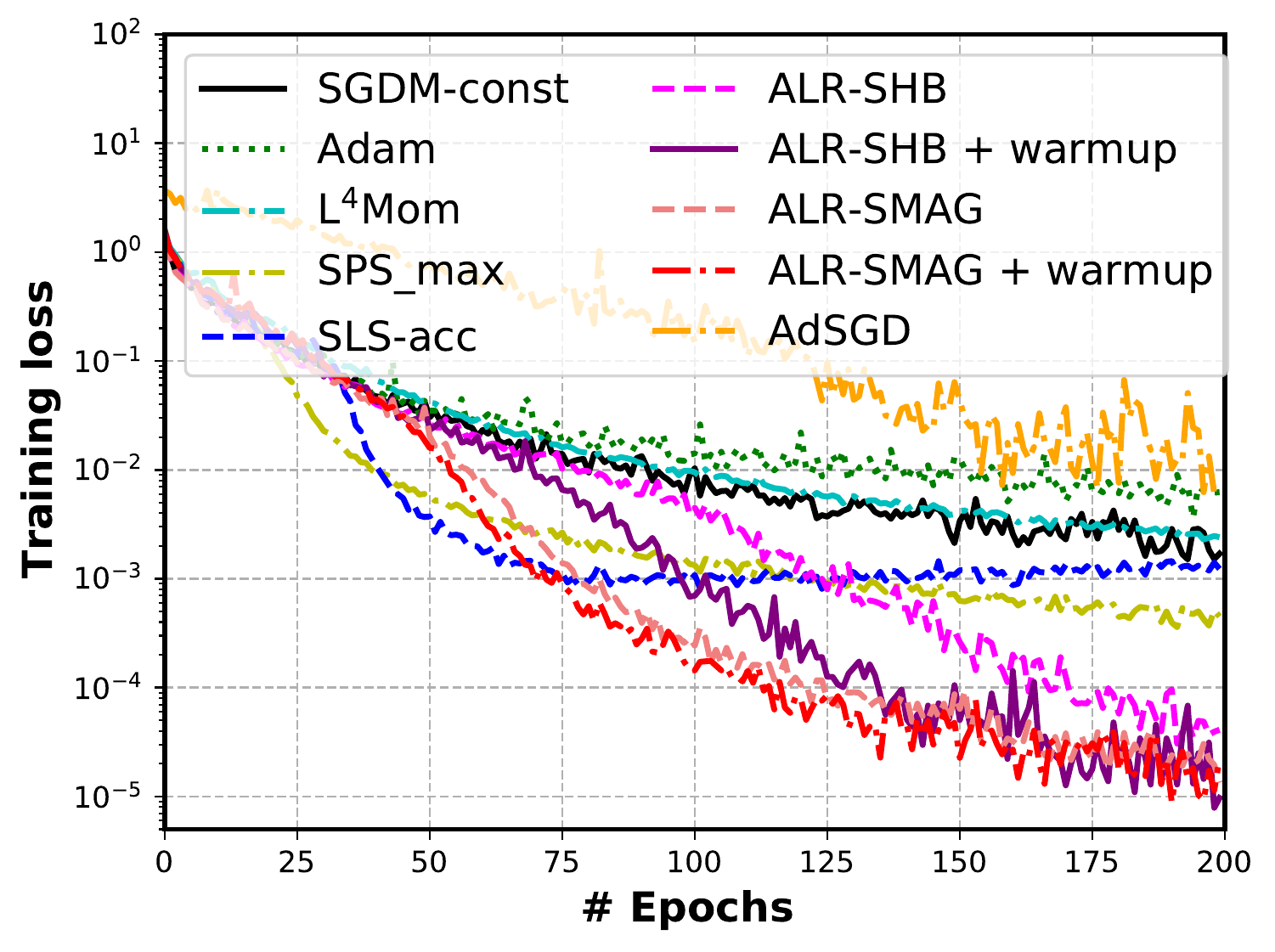}
  \hfill
\includegraphics[width=0.45\textwidth]{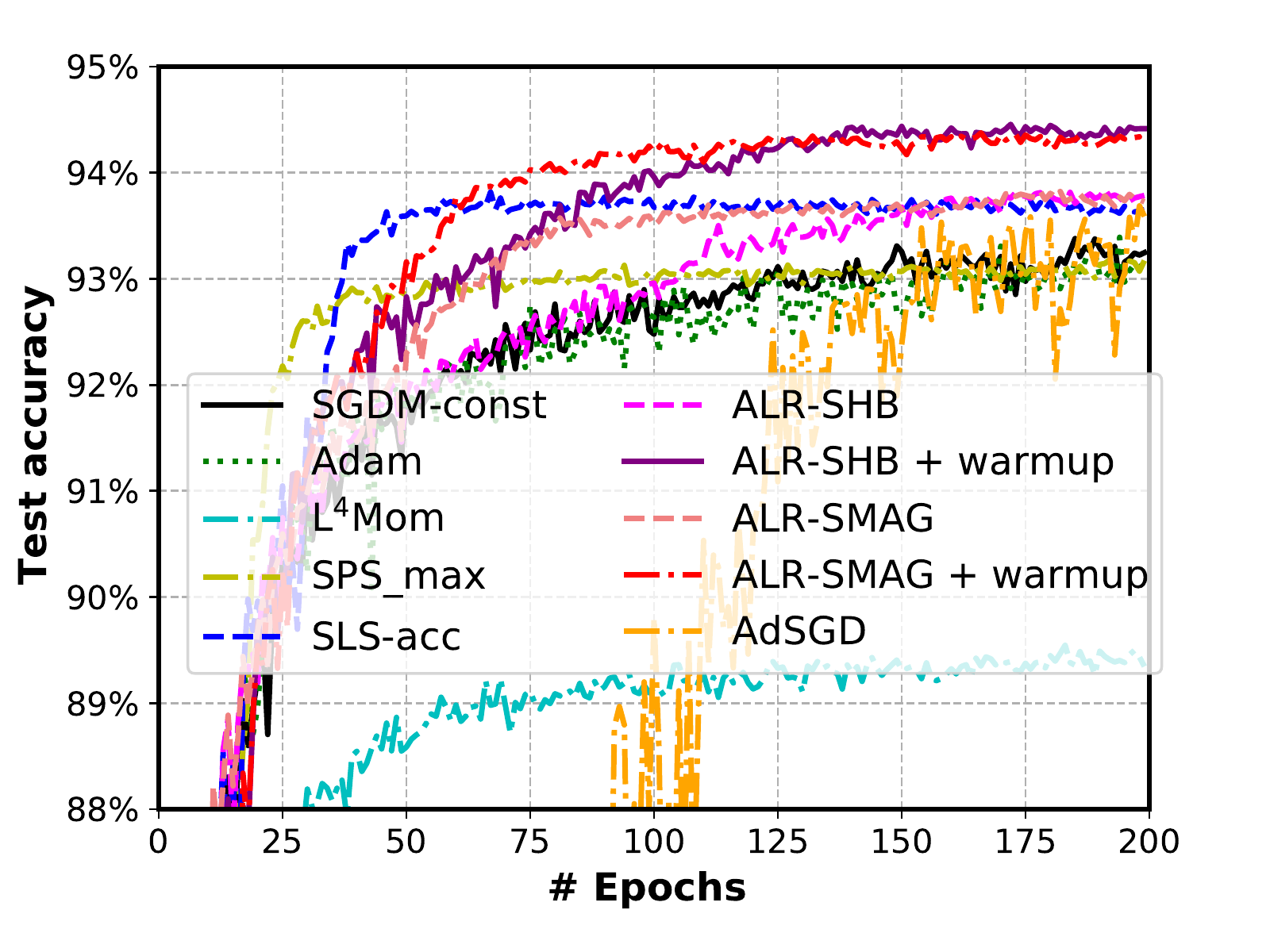}
        \caption{CIFAR10 - ResNet34: training loss (left) and test accuracy (right)}
        \label{fig:lr:cifar10}
            \end{center}
\end{figure}

For the experiments of CIFAR100 on WRN-28-10, the details of the algorithms:  SGDM under constant step size is shown below: $\eta \in \left\lbrace 0.001, 0.01, 0.1, 1 \right\rbrace$ and we set $\eta = 0.1$; SGDM with step-decay $\eta_k = \eta_0/ 10^{\lfloor k/K_0 \rfloor}$ where $K_0=\lceil K/3 \rceil$ where $K$ is the total number of iterations and we set $\eta_0=0.1$; Adam with $\eta = 0.001$ and $(\beta_1, \beta_2) = (0.9, 0.999)$; L$^4$Mom: we set $\alpha = 0.15$; stochastic line search with momentum (SLS-acc)~\citep{SLS} with $c=0.1$; SPS\_${\max}$~\citep{SPS}: we set $c = 0.2$ and $\eta_{\max}=1$ with smoothing technique; AdSGD with parameters $(\sqrt{1+0.02\theta}, 1/L_k)$. For our algorithms: ALR-SHB: $\eta_{\max} = 0.1$ and $c = 0.5$, ALR-SMAG: $\eta_{\max} = 0.1$ and $c = 0.05$ (for warmup, we set $c=0.5$ for ALR-SHB  and $c=0.05$ for ALR-SMAG, and $\eta_{\max}=\min(10^{-4}k, 1)$). In Figure~\ref{fig:smag:lr}, we present the adaptive step sizes of ALR-SMAG with and without warmup. The step size is not stably decreasing but hits the upper bound at the beginning of training, later drops for some iterations, and then hits the upper bound again in a somewhat irregular pattern.
\begin{figure}
 \begin{center}
\includegraphics[width=0.42\textwidth]{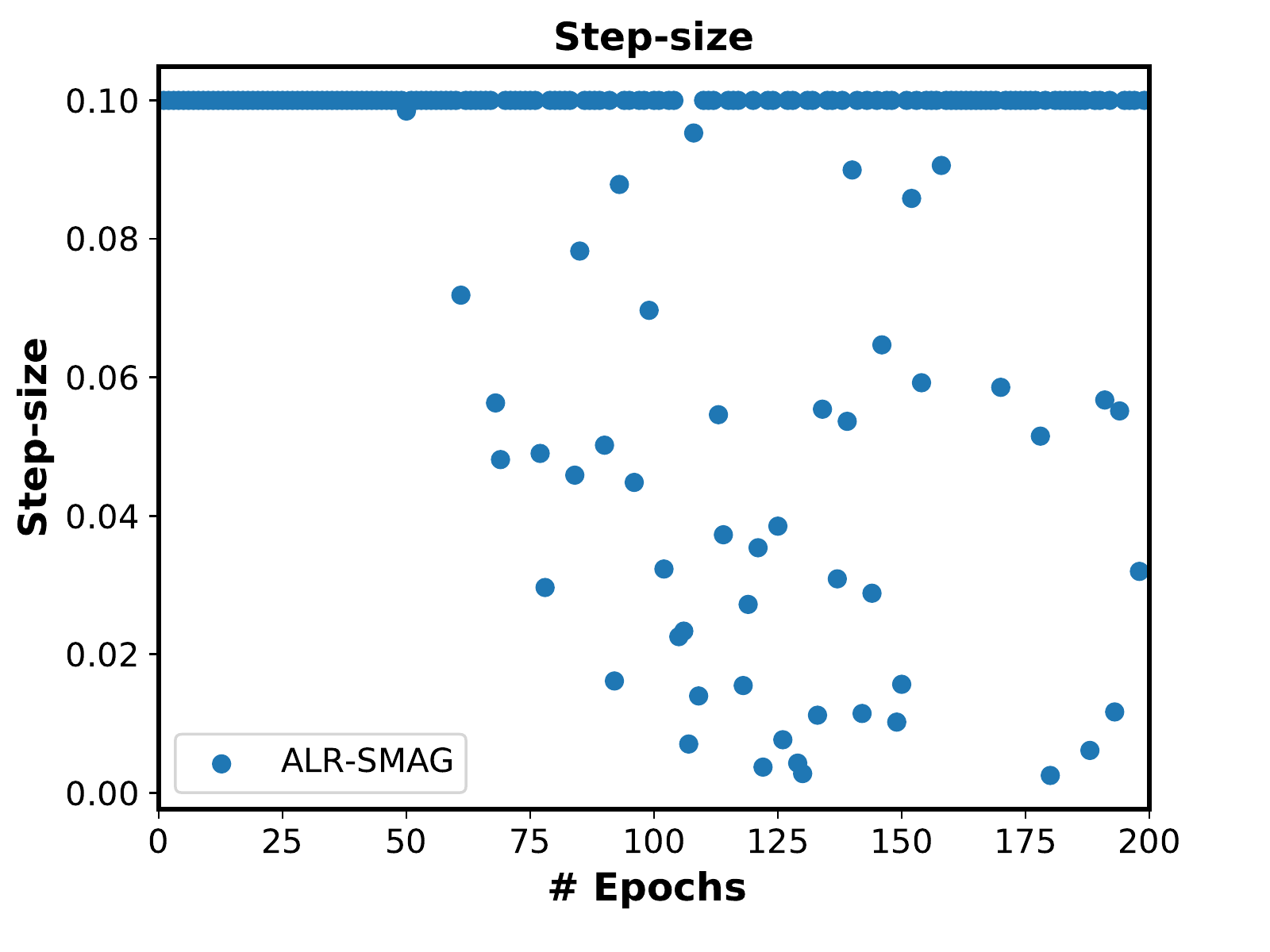} \hfill
\includegraphics[width=0.42\textwidth]{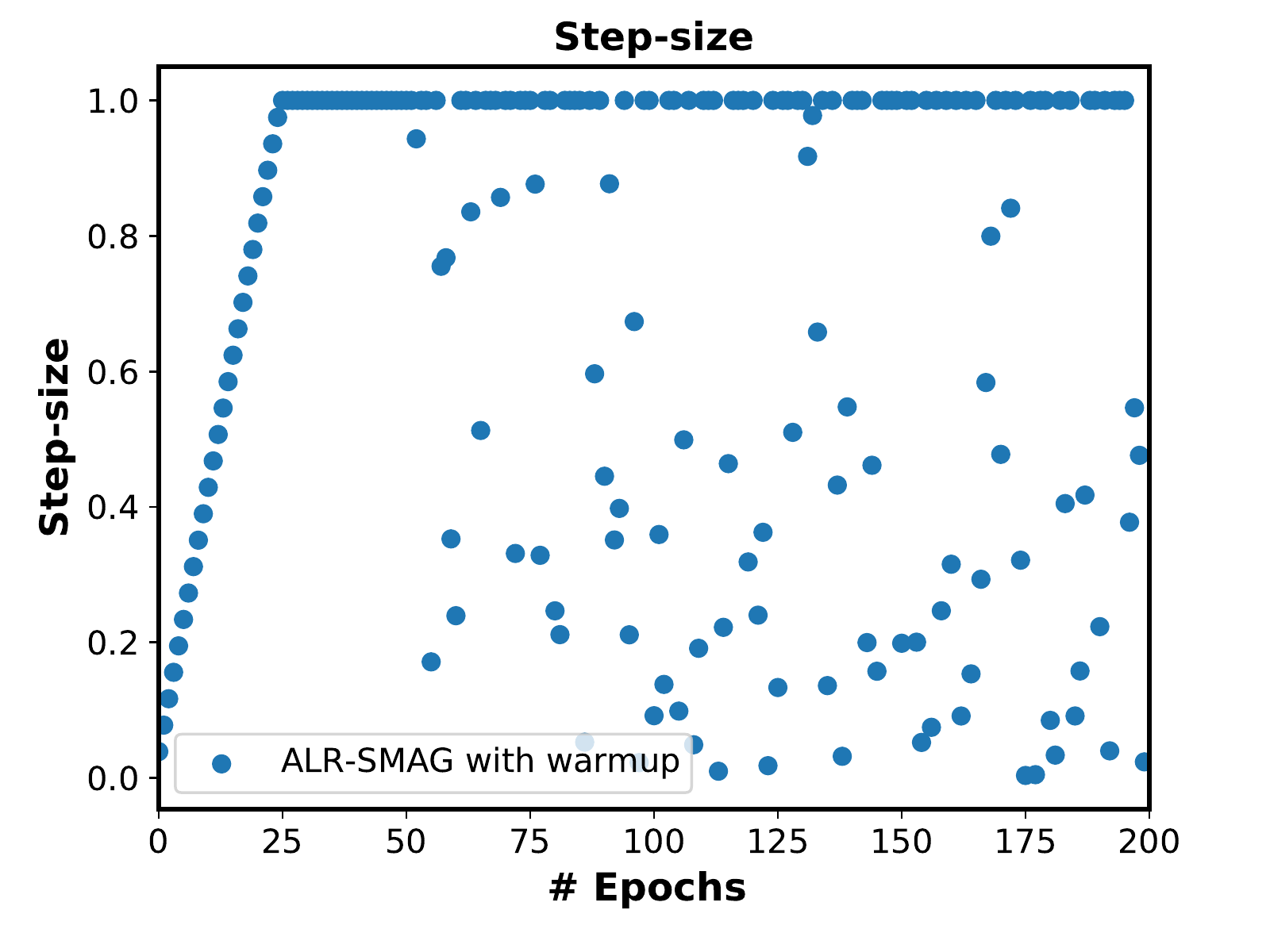}
    \caption{The plot of step sizes of ALR-SMAG and ALR-SMAG under warmup}
    \label{fig:smag:lr}
\end{center}
\end{figure}

The details of the algorithms on the experiment of CIFAR100 on DenseNet121: SGDM under constant step size $\eta = 0.01$; SGDM with step-decay $\eta_k = \eta_0/ 10^{\lfloor k/K_0 \rfloor}$ where $K_0=\lceil K/3 \rceil$ and $\eta_0=0.01$. For our algorithms: ALR-SHB with $c=0.5$ and $\eta_{\max}=0.01$, ALR-SMAG with $c=0.1$ and $\eta_{\max}=0.01$; For warmup, we set $c=0.5$ for ALR-SHB and $c=0.1$ for ALR-SMAG, and $\eta_{\max}=0.1 \min(10^{-4}k, 1)$. For the other algorithms, the parameters are the same as those on WRN-28-10.

Finally, we show how the hyper-parameter $c>0$ is related to the performance of ALR-SMAG. The parameter $c$ is tested from the set $\left\lbrace 0.05, 0.1, 0.2, 0.3, 0.5\right\rbrace$. The result is reported in Figure \ref{fig:mag:c}. We can see that the hyper-parameter $c$ is insensitive to the performance of ALR-SMAG in a small range $c \in [0,1,0.5]$. In this case, the results for $c=0.05, 0.1, 0.2$ are similar. We suggest that we might set the hyper-parameter $c$ to 0.1 in the experiments on CIFARs (CIFAR10 and CIFAR100).  

\begin{figure}
\begin{center}
\includegraphics[width=0.42\textwidth]{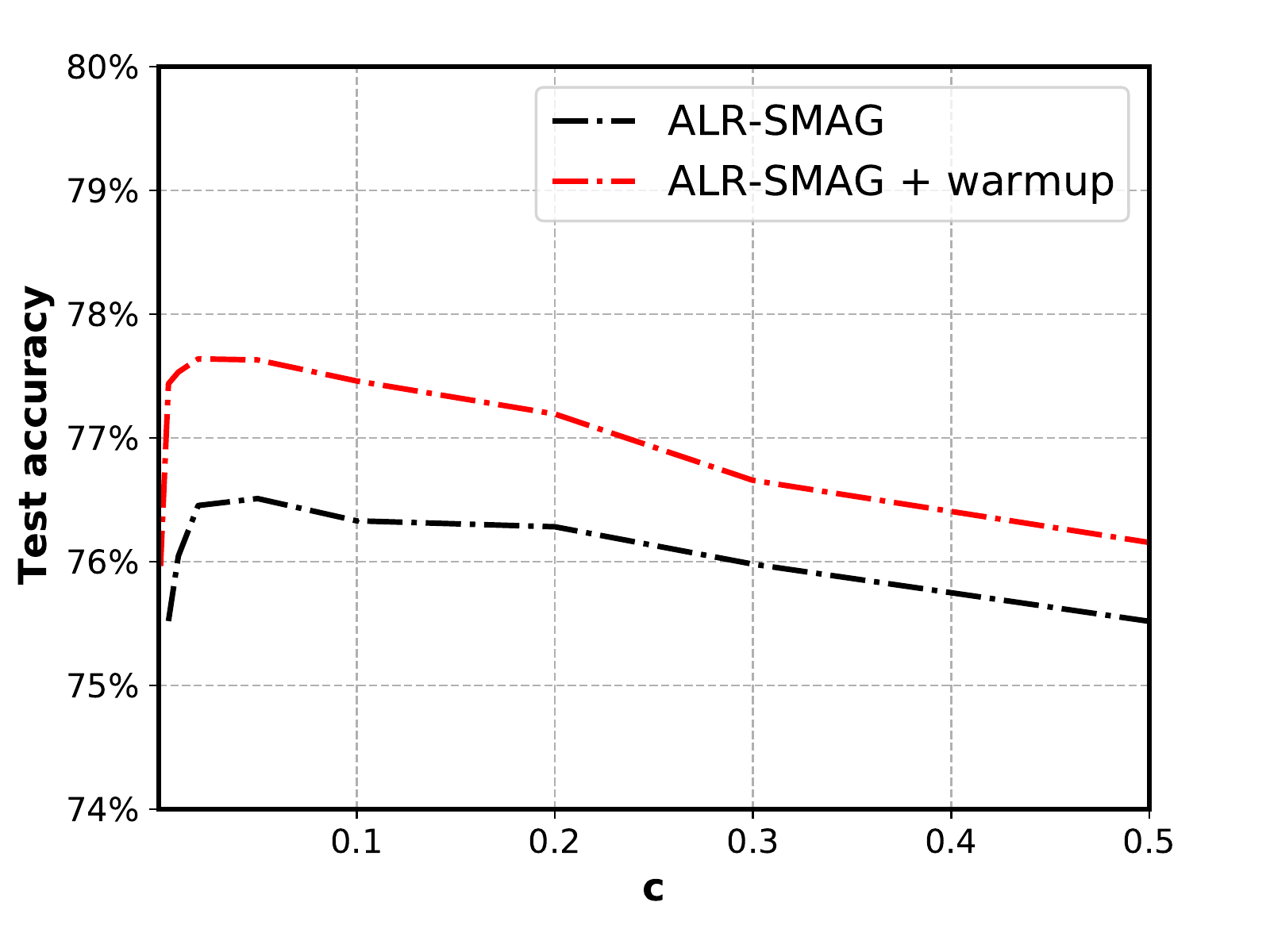}
\caption{The behavior of the parameter $c$ of ALR-SMAG on CIFAR100 - WRN-28-10}
    \label{fig:mag:c}
    \end{center}    
\end{figure}

 \subsection{Results on Tiny-ImageNet200}\label{append:tinyimagenet}

We now turn our attention to Tiny-ImageNet200~\citep{le2015tiny} on ResNet18~\citep{he2016deep} with the pre-trained model. This dataset includes 50000 images (200 classes) for training and 10000 images for the test. In this experiment, we compare our algorithms ALR-SHB and ALR-SMAG against SGD with momentum under constant step size, the popular step-decay~\citep{ge2019step} and cosine decay~\citep{loshchilov2016sgdr} step sizes, L$^4$Mom~\citep{L4} and Adam~\citep{Adam}. The results are reported in Table~\ref{tab:tinyimagenet:polyak-grad}. The maximal epoch call is 200 and the batch size is 256.

The details of the algorithms are shown below: SGD with momentum (SGDM) under constant step size $\eta = 0.01$; step-decay $\eta_k = \eta_0/10^{\lfloor k/K_0 \rfloor}$ where $K_0=\lceil K/3\rceil $; cosine decay step size $\eta_k = 0.5 \eta_0(\cos(k\pi /K)+1)$ where $K$ is the total number of iterations and $\eta_0=0.01$; Adam with constant step size $\eta = 0.001$; L$^4$Mom with $\alpha=0.15$. For our algorithm ALR-SMAG, we set $\eta_0=0.01$ and $c=0.5$; with warmup, we set $\eta_{\max}=\eta_0 \min(10^{-6}k, 1)$ with $\eta_0=0.1$ and $c=0.5$.

\begin{table}[h]
\caption{The result of test accuracy on Tiny-ImageNet200 - ResNet18}
\label{tab:tinyimagenet:polyak-grad}
\vskip 0.1in    
\begin{center}
\begin{small}
\begin{sc}
\begin{tabular}{lcccc}
\toprule  \cline{1-5}
 \multirow{2}{*}{Method}  & \multicolumn{4}{c}{Test accuracy (\%)} \\
& \#60 & \#120 & \#180 & Best\\ \midrule
SGDM-const & 64.88 &  65.05 & 65.17 & 65.87 $\pm$ 1.37 \\ \hline 
ADAM & 58.37 & 58.37 & 58.70 & 59.56 $\pm$ 0.35\\ \hline 
  L$^4$Mom & 65.84 & 65.58 & 65.51 &  66.87 $\pm$ 1.48  \\ \hline 
 SGDM-step  &  65.2 & {\bf 67.08} & {\bf 67.15} & 67.35 $\pm$ 1.24  \\   \hline 
SGDM-cosine & 65.75 & 66.69 & 66.95 & 67.13 $\pm$ 1.13 \\   \hline  
ALR-SMAG &   66.29  &  66.11 &  66.05 & 66.71 $\pm$ 1.69 \\ \hline
ALR-SMAG + Warmup &  {\bf 67.36} & 67 & 67.09 & {\bf 67.66 $\pm$ 1.05} \\
  \bottomrule
\end{tabular}
\end{sc}
\end{small}
\end{center}
\end{table}

For a wide range of problem classes, we can select $c$ from a small range $c \in \left\lbrace 0.1, 0.5\right\rbrace$. If the problem is 'difficult' to solve, i.e., necessitates a small step size, we recommend $c=0.3$ or $c=0.5$. For the problem at the level of training CIFARs, we can use $c=0.1$. If a user does not have any prior information about the problems and does not want to pay any effort to tune $c$, we recommend using $c=0.3$ since it works well for a wide range of problems and does not give significantly worse performance than a better-tuned value.

\subsection{Details of the Experiments in Section~\ref{sec:wd}}
\label{append:wd}
In the experiments for ALR-SMAG with weight-decay, the details of the algorithms are addressed as below: AdamW with step-decay step size:$\eta_0 = 0.001$ and $\eta_k = \eta_0/10^{\lfloor k/K_0 \rfloor}$ where $K_0=\lceil K/3\rceil $; SGDM with warmup: $\eta_k = \eta_0 \min\left(10^{-6}k, \frac{1}{\sqrt{k}} \right)$ and $\eta_0=0.1$;
SGDM under step-decay step size $\eta_k = \eta_0/10^{\lfloor k/K_0 \rfloor}$ with $\eta_0 = 0.1$ and $K_0=\lceil K/3\rceil$; SGDM under cosine step size without restart $\eta_k = 0.5 \eta_0(\cos(k\pi /K)+1)$ where $K$ is the total number of iterations and $\eta_0=0.1$; ALR-SMAG: $\eta_{\max}=0.1$ and $c=0.3$, $\lambda = 0.0005$. 
In the fine-tuning phase, the parameter $c$
 is exponentially increased after $K_{mid}$ steps and  $c = c_0 \exp^{\left(\frac{k-K_{mid}}{K - K_{mid}}\right)\ln(c_{\max}/c_0)}$. In this experiment, $K_{mid} = 0.8 K$ and $c_{\max}=100 c_0$ where $c_0=0.3$.
\begin{algorithm}[ht]
\captionof{algorithm}{ALR-SMAG with weight-decay }\label{alg:mag:wd}
\begin{algorithmic}[1]
  \STATE {\bfseries Input:}  $x_1$, $\beta \in (0,1)$, $c > 0,  \eta_{\max}, \lambda > 0, \epsilon=10^{-5}$
\WHILE{$x_k$ does not converge}
\STATE{$ k \leftarrow k+1 $}
		\STATE $g_k \leftarrow \frac{1}{|S_k|}\sum_{i \in S_k} \nabla f(x_k; \xi_i)$  \\\vspace{0.01in}
		\STATE $f_{S_k}(x_k) \leftarrow  \frac{1}{|S_k|}\sum_{i \in S_k}  f(x_k; \xi_i)$  
		\STATE $d_{k} \leftarrow \beta d_{k-1} + g_k$ 

		\STATE $
		    \eta_k \leftarrow  \min \left\lbrace \eta_{\max}, \frac{f_{S_k}(x_k)}{c \left\|d_{k} \right\|^2 + \epsilon} \right\rbrace $
		\STATE $x_{k+1} \leftarrow  x_k - \eta_k (d_{k} \bm{ + \lambda x_k} )$
\ENDWHILE
\end{algorithmic}
   \end{algorithm}

\end{document}